\title{Commutative control data for smoothly locally trivial stratified spaces}
\author{Yoav Zimhony}
\newcommand{\R}{\mathbb{R}}
\newcommand{\N}{\mathbb{N}}
\newcommand{\E}{\mathcal{E}}
\newcommand{\g}{\mathfrak{g}}
\newcommand{\h}{\mathfrak{h}}
\renewcommand{\S}{\mathcal{S}}
\renewcommand{\L}{\mathcal{L}}
\newcommand\norm[1]{\lVert#1\rVert}
\newcommand{\subscript}[2]{${#1} _ #2$}
\newtheorem{thm}{Theorem}[section]
\newtheorem{lem}[thm]{Lemma}
\newtheorem*{claim_nn}{Claim}
\newtheorem{cor}[thm]{Corollary}
\newtheorem{prop}[thm]{Proposition}
\theoremstyle{remark}
\newtheorem{example}[thm]{Example}
\newtheorem{rem}[thm]{Remark}
\newtheorem{notation}[thm]{Notation}
\theoremstyle{definition}
\newtheorem{definition}[thm]{Definition}
\newtheorem*{question}{Question}
\theoremstyle{plain} 
\newcommand{\thistheoremname}{}
\newtheorem*{genericthm}{\thistheoremname}
\newenvironment{namedthm}[1]
  {\renewcommand{\thistheoremname}{#1}%
   \begin{genericthm}}
  {\end{genericthm}}
\begin{document}

\maketitle

\begin{abstract}
For a compact Lie group $G$ and a Hamiltonian $G$-space $M$ with momentum map $\mu:M\to \g^*$, we prove that the zero level set $\mu^{-1}(0)$ and the critical set $\text{Crit}\norm{\mu}^2$ of the norm squared momentum map are neighbourhood smooth weak deformation retracts. 
To this end we show that these subsets, stratified by orbit types, satisfy a condition stronger than Whitney (B) regularity --- \textit{smooth local triviality with conical fibers}.
Using this condition we construct control data in the sense of Mather with the additional properties that the fiber-wise multiplications by scalars, coming from the tubular neighbourhood structures, preserve strata and commute with each other. We use this control data to obtain the neighbourhood smooth weak deformation retraction.
Finally, such structures for the zero level set $\mu^{-1}(0)$ reduce to similar structures for the reduced space $\mu^{-1}(0)/G$, yielding a similar result for the reduced space and its stratified subspaces.
\end{abstract}

\section{Introduction}
\subsection{Motivation and background}
Let $G$ be a compact Lie group and $M$ a Hamiltonian $G$-space with momentum map $\mu:M\to \g^*$.
Sjamaar and Lerman \cite[Theorem~2.1]{sjamaar1991stratified} proved that the orbit type stratification of $M$ induces Whitney (B) stratifications of the zero level $\mu^{-1}(0)$ and of the reduced space $\mu^{-1}(0)/G$.

Given an $\text{Ad}$-invariant inner product on $\g$ and the induced ${\text{Ad}}^*$-invariant inner product on $\g^*$, denote by $\text{Crit}\norm{\mu}^2$ the critical set of the norm square of the momentum map. The orbit type stratification of $M$ also induces a $G$-invariant Whitney (B) stratification of $\text{Crit}\norm{\mu}^2$, and the zero level set $\mu^{-1}(0)$ is a union of connected components of $\text{Crit}\norm{\mu}^2$. See Theorem \ref{crit_set_stratification_thm}.

The motivating question of this paper is the following:
\begin{question}
For each of $\mu^{-1}(0)$ and $\text{Crit}\norm{\mu}^2$, is there a $G$-invariant neighbourhood and a $G$-equivariant smooth homotopy inverse to the inclusion into this neighbourhood?
\end{question}
When these sets are submanifolds, a positive answer to this question follows from the tubular neighbourhood theorem. A positive answer for $\text{Crit}\norm{\mu}^2$ was conjectured by Harada and Karshon \cite[Remark~4.23]{harada_karshon}, and it implies that their localization theorem for Duistermaat--Heckmann distributions \cite[Theorem~4.24]{harada_karshon} applies to $Z=\text{Crit}\norm{\mu}^2$ without additional assumptions. See similar results by Paradan \cite{PARADAN2000401} and Woodward \cite{woodward_gradient}.

Topologically, if we omit the requirement that the homotopy inverse be smooth and instead only require continuity, then this was proved by Woodward \cite[Appendix~A]{woodward_gradient} and Lerman \cite{lerman_gradient}, who attribute the proof to Duistermaat. They show that the gradient flow of the norm squared momentum map defines a continuous deformation retraction from a neighbourhood onto $\text{Crit}\norm{\mu}^2$.

One can obtain continuous deformation retractions also using the stratified structures on these spaces. For a general Whitney (B) stratified subset, Goresky \cite[Proposition~0.5.1]{goresky_thesis} used Mather's control data \cite{mather2012notes} to construct a structure called \textit{a family of lines}, which allowed him to obtain a continuous weak deformation retraction from a neighbourhood onto that subset. 
Pflaum and Wilkin \cite{pflaum2017equivariant} used similar ideas to construct a $G$-equivariant continuous strong deformation retraction when the stratification is $G$-invariant. Both methods yield maps that are smooth when restricted to each stratum.

Smoothly, we recall that a subset of a manifold is a smooth neighbourhood retract if and only if it is a submanifold, so a smooth strong deformation retraction does not exist for general stratified subsets. Moreover, we suspect that smooth weak deformation retractions do not exist for general Whitney (B) stratified subsets; the fast spiral \cite[Example~1.1.13]{pflaum_book} might be an example.

\subsection{Results and methods}
We prove that the answer for our motivating question is yes. This is carried out in the following steps.

\begin{namedthm}{Theorems \ref{crit_set_stratification_thm} and \ref{zero_level_set_stratification_thm}}
The stratifications of  $\mu^{-1}(0)$ and $\text{Crit}\norm{\mu}^2$ by orbit types are smoothly locally trivial with conical fibers.
\end{namedthm}
\textit{Smooth local triviality with conical fibers} is a local condition on stratified subsets, stronger than Whitney~(B) regularity.
It reads that for every stratum $X$ and every $p\in X$, a neighbourhood of $p$ in $M$ is diffeomorphic to $\R^k \times \R^{n-k}$ such that:
\begin{itemize}
    \item $X$ is identified with $\R^k\times\{0\}$.
    \item All nearby strata $Y$ (i.e., $X\subset \overline{Y}$) are of the form $\R^k\times C_Y$ for a subset $C_Y\subset \R^{n-k}\setminus \{0\}$ invariant under multiplication by $t\in(0,1)$.
\end{itemize}

We use this local property to obtain a global structure:
\begin{namedthm}{Theorems \ref{tangential_control_data_thm} and \ref{commutative_control_data_theorem}}
A subset with a $G$-invariant stratification that is smoothly locally trivial with conical fibers admits a $G$-equivariant commutative tangential control data.
\end{namedthm}
A \textit{$G$-equivariant commutative tangential control data} consists of the following data. For each stratum $X$ we have:
\begin{itemize}
    \item A $G$-invariant tubular neighbourhood $T_X\subset M$.
    \item A $G$-equivariant fiber-wise multiplication by scalars $m_X:T_X\times\R_{\geq 0}\to T_X$.
    \item A $G$-invariant fiber-wise distance-squared function $\rho_X:T_X\to\R_{\geq 0}$.
\end{itemize}
such that the following compatibility conditions are satisfied. For two strata $X,Y$ with $X\subset \overline{Y}$, in the domains of definition:
\begin{itemize}
    \item $m_X^0\circ m_Y^t = m_X^0$ for $t\geq 0$.
    \item $\rho_X\circ m_Y^t = \rho_X$ for $t\geq 0$.
    \item $\rho_Y\circ m_X^s = \rho_Y$ for $s > 0$.
    \item \textit{Tangential}: the map $m_X^s:T_X\to T_X$ preservers $Y\cap T_X$.
    \item \textit{Commutative}: the maps $m_X^s, m_Y^t$ commute for $t \geq 0, s > 0$.
\end{itemize}
See Subsection \ref{control_data_subsection} for full definitions. We note that Mather's control data is required to satisfy
only the first two bullets, and only with $t=0$.

The final step of our answer to the motivating question is:
\begin{namedthm}{Theorem \ref{weak_deformation_theorem}}
A subset with a $G$-invariant stratification and a $G$-equivariant commutative tangential control data is a $G$-equivariant neighbourhood smooth weak deformation retract.
\end{namedthm}
As a corollary, given a closed subset which is a union of strata, its inclusion map into the stratified subset has a smooth homotopy inverse (Remark \ref{stratified_subspace_deformation}).

The last section aims to extend the theory to reduced spaces. For zero level sets $\mu^{-1}(0)$, our control data reduces to similar data for reduced spaces $\mu^{-1}(0)/G$.
The obtained structure consists of maps on $\mu^{-1}(0)/G$, smooth in a suitable sense, which satisfy properties similar to those of the commutative tangential control data (Subsection \ref{reduced_space_control_data_subsection}). We also prove that the stratification of the reduced space with its smooth structure is \textit{smoothly locally trivial with quasi-homogenuous fibers} (Theorem \ref{reduced_space_local_structure_thm}).

A key technical tool used in this paper is the notion of Euler-like vector fields along submanifolds, introduced by Bursztyn, Lima and Meinrenken \cite{bursztyn2019splitting}. These vector fields have the following important properties:
\begin{itemize}
    \item An Euler-like vector field along $N$ induces a tubular neighbourhood of $N$ such that the flow of the vector field becomes the fiber-wise multiplication by $\exp(t)$.
    \item Being Euler-like is a local property and is preserved under convex combinations, so one can patch local Euler-like vector fields to a global one using partitions of unity.
\end{itemize}
Put together, these properties give a convenient way to patch local tubular neighbourhoods to a global one, while retaining information regarding the fiber-wise multiplication by scalars. We believe this method is simpler and more explicit than the one used by Mather \cite[Proposition~6.1]{mather2012notes} for similar purposes.

\subsection{Outline}
In Section \ref{euler_like_section} we recall the definition of Euler-like vector fields and develop some technical tools related to them. 
In Section \ref{stratified_section} we present the setting and the structures we aim to construct in the following sections. 
In Section \ref{tangential_control_data_section} we construct tangential control data for a stratified space $C$ which is smoothly locally trivial with conical fibers. In Section \ref{commutative_control_data_section} we modify this control data so that it is also commutative. In Section \ref{weak_deformation_section} we use the modified control data to construct a neighbourhood smooth weak deformation retraction to $C$.
In Section \ref{orbit_type_section} we prove that the stratifications of $\mu^{-1}(0)$ and $\text{Crit}\norm{\mu}^2$ are smoothly locally trivial with conical fibers.
Finally, in Section \ref{extension_section} we extend the theory so that it applies to reduced spaces $\mu^{-1}(0)/G$.

\subsection{Acknowledgements} 
I would like to thank my advisor, Yael Karshon, for introducing me to this problem and for many illuminating and helpful discussions. This research is partially funded by the United States -- Israel Binational Science Foundation.

\section{Tubular neighbourhoods and Euler-like vector fields}\label{euler_like_section}
We use the following conventions and notation. For a topological space $M$ and a subset $A\subset M$ we denote its closure by $\overline{A}$. For a vector field $X$ on a manifold $M$, we denote its flow by $\Psi_X^t:M\to M$.

We recall some definitions and results by Bursztyn, Lima and Meinrenken \cite{bursztyn2019splitting, MEINRENKEN2021224}. 
Let $M$ be a manifold of dimension $m$ and let $N\subset M$ be a submanifold of dimension $n$ which is closed in $M$. Let $\nu_N=\nu(M,N)$ be the normal bundle of $N$ in $M$ and identify $N\subset \nu_N$ with the zero section. Let 
\[
m^t:\nu_N\to \nu_N,\quad t\geq 0
\]
denote the multiplication by $t$ in $\nu_N$. 

For another manifold $M'$, a submanifold $N'\subset M'$ and a map $f:M\to M'$ taking $N$ to $N'$, let $\nu(f):\nu(M,N)\to\nu(M',N')$ be the \textit{linear approximation} of $f$.
\begin{definition}
A \textbf{tubular neighbourhood} of $N\subset M$ consists of a star-shaped open neighbourhood $U\subset \nu_N$ of the zero section (i.e., a neighbourhood invariant under $m^t$ for $t\in [0,1]$) and an embedding $\Phi:U\to M$ taking the zero section $N\subset \nu_N$ to $N\subset M$ such that the induced map
\[
\nu(\Phi):\nu_N\simeq \nu(U,N) \to \nu(M,N)=\nu_N
\]
is the identity.

A tubular neighbourhood is \textbf{complete} if $U=\nu_N$.
\end{definition}
\begin{definition}[{\cite[Example~2.1]{bursztyn2019splitting}}]
The \textbf{Euler vector field $\E$} on $\nu_N$ is the vector field with flow $\Psi_\E^t=m^{\exp{t}}$. In local bundle coordinates, with $x^i$ the coordinates on the fiber and $y^j$ those in the base direction, it is given by
\[
\E = \sum_{i}x^i\frac{\partial}{\partial x^i}.
\]
\end{definition}

Let $X$ be a vector field on $M$ tangent to $N$. In \cite{bursztyn2019splitting} it is shown that the normal bundle functor $\nu$ can be used to obtain a vector field $\nu(X)$ on $\nu_N$ with the property that the local flow of $\nu(X)$ is the linear approximation of the local flow of $X$.
\begin{definition}[{\cite[Definition~3.1]{MEINRENKEN2021224}}]\label{euler_like}
An \textbf{Euler-like vector field along $N$} is a vector field $X$ on $M$ tangent to $N$ such that $\nu(X)=\E$. 
\end{definition}
\begin{rem}\label{euler_like_deriv}
$X$ being Euler-like is equivalent to the following property --- for all $f\in C^\infty(M)$ vanishing on $N$, the function
\[
f - \L_Xf
\]
vanishes to order $2$ on $N$.
\end{rem}
\begin{rem}
We do not require an Euler-like vector field $X$ to be complete (cf. \cite[Definition~2.6]{bursztyn2019splitting}).
\end{rem}
\begin{prop}\label{euler_like_linearity}\hfill
\begin{enumerate}
    \item Let $X_1,\dots,X_r$ be Euler-like vector fields along $N$ and let $\varphi_1,\dots,\varphi_r$ be a partition of unity. 
    Then $\sum_{i=1}^r \varphi_i\cdot X_i$ is also Euler-like along $N$. 
    \item Let $X$ be an Euler-like vector field and let $G$ be compact Lie group. 
    Then the vector field obtained by averaging $X$ over $G$ is also Euler-like along $N$.
\end{enumerate}
\end{prop}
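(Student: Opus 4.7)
The plan is to reduce both statements to the infinitesimal characterization given in Remark \ref{euler_like_deriv}: a vector field $X$ on $M$ is Euler-like along $N$ if and only if $f-\L_X f$ vanishes to order $2$ on $N$ for every $f\in C^\infty(M)$ vanishing on $N$. (Tangency to $N$ is then automatic, since applied to such an $f$ the condition forces $\L_X f$ to vanish on $N$.) Both stability properties follow once this vanishing condition is verified for the new vector field.

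For (1), I use that $Y\mapsto \L_Y f$ is $C^\infty(M)$-linear in $Y$ for a fixed function $f$. Combined with $\sum_i \varphi_i = 1$, this gives
\begin{equation*}
f - \L_{\sum_i \varphi_i X_i} f \;=\; \sum_i \varphi_i f - \sum_i \varphi_i \L_{X_i} f \;=\; \sum_i \varphi_i\bigl(f - \L_{X_i}f\bigr),
\end{equation*}
a finite sum of smooth functions multiplying functions that vanish to order $2$ on $N$, and which therefore itself vanishes to order $2$ on $N$.

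For (2), assuming $M$ carries a smooth $G$-action preserving $N$ (required for the average to be tangent to $N$), write $X^G=\int_G g_\ast X\, dg$. For $g\in G$ and $f$ vanishing on $N$, the pullback $f\circ g$ also vanishes on $N$, so $(f\circ g)-\L_X(f\circ g)$ vanishes to order $2$ on $N$ by hypothesis. A direct computation shows $\L_{g_\ast X} f = \bigl(\L_X(f\circ g)\bigr)\circ g^{-1}$, and since precomposition with the diffeomorphism $g^{-1}$ preserves $N$ and the order of vanishing along it, we obtain
\begin{equation*}
f - \L_{g_\ast X} f \;=\; \bigl((f\circ g) - \L_X(f\circ g)\bigr)\circ g^{-1},
\end{equation*}
which vanishes to order $2$ on $N$ for every $g$. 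Using $\int_G dg = 1$,
\begin{equation*}
f - \L_{X^G} f \;=\; \int_G \bigl(f - \L_{g_\ast X} f\bigr)\, dg.
\end{equation*}
The main point to verify, which I expect to be the most delicate step, is that integrating this smooth $G$-family of functions preserves the order-$2$ vanishing along $N$. This follows from compactness of $G$ together with the joint smoothness of the integrand in $(g,p)$, which permit interchanging the integral with the value and with first and second partial derivatives at points of $N$, forcing those quantities of the averaged function to vanish on $N$ as well.
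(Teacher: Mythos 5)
Your proof is correct and takes essentially the same route as the paper, which disposes of both parts with a one-line appeal to the linearity of the Euler-like condition; you carefully unpack this via the characterization of Remark \ref{euler_like_deriv}, verifying $C^\infty(M)$-linearity in the vector field for part (1) and equivariance of the order-two vanishing condition under the $G$-action for part (2). You are also right that part (2) implicitly requires the $G$-action to preserve $N$, a hypothesis the paper's statement leaves unstated.
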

\begin{proof}
These follow by the linearity of the definition of Euler-like along $N$.
\end{proof}
\begin{thm}[\cite{bursztyn2019splitting,MEINRENKEN2021224}]\label{euler_like_tubular}
Let $X$ be an Euler-like vector field along $N$. Then there exists a unique maximal tubular neighbourhood $(U,\Phi)$ such that
\[
\E\sim_\Phi X.
\]

Moreover, the open set $\Phi(U)\subset M$ is given by
\[
\{p\in M : \lim_{t\to-\infty}\Psi_X^t(p)\text{ exists and is in }N\}.
\]

If $X$ is complete, the tubular neighbourhood
is complete. If $N$ and $X$ are $G$-invariant, the tubular neighbourhood embedding is $G$-equivariant.
\end{thm}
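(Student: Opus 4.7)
The plan is to construct the embedding $\Phi$ by a renormalization procedure: start from an arbitrary tubular neighborhood coming from the classical theorem, and use the flow of $X$ to correct it into one that genuinely intertwines $\E$ and $X$. The Euler-like hypothesis is exactly what guarantees that this correction converges.

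For uniqueness, observe that $\E\sim_\Phi X$ is equivalent to the intertwining relation $\Phi\circ m^{e^t}=\Psi_X^t\circ\Phi$ wherever both sides are defined. For any $v\in U$ and large $T$, this gives
\[
\Phi(v)=\Psi_X^T\bigl(\Phi(m^{e^{-T}}v)\bigr).
\]
Since $m^{e^{-T}}v$ approaches the zero section as $T\to\infty$, and $\Phi$ is pinned down near $N$ by $\Phi|_N=\mathrm{id}_N$ together with $\nu(\Phi)=\mathrm{id}$, this formula determines $\Phi$ uniquely. The maximal domain $U$ is then the set of $v$ for which the formula makes sense.

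For existence, I would pick an arbitrary tubular neighborhood $(U_0,\Phi_0)$ provided by the classical theorem, chosen $G$-equivariantly by averaging in the equivariant case. The pullback $X_0:=\Phi_0^*X$ on $U_0\subset\nu_N$ is Euler-like, so by Remark \ref{euler_like_deriv} the difference $X_0-\E$ vanishes to second order along the zero section. Define
\[
\Phi_T(v):=\Psi_X^T\circ\Phi_0\circ m^{e^{-T}}(v),
\]
for $T$ large enough that $m^{e^{-T}}v\in U_0$. The key analytic step is to show that $\Phi_T(v)$ is Cauchy as $T\to\infty$, uniformly on compact subsets of $\nu_N$, together with its derivatives. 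This is controlled by the second-order vanishing: for small $w\in\nu_N$ one has $\Psi_{X_0}^{-s}(w)-m^{e^{-s}}(w)=O(|w|^2)$, which makes the successive increments $\Phi_{T+1}-\Phi_T$ geometrically summable. From these estimates, $\Phi:=\lim_{T\to\infty}\Phi_T$ is a smooth embedding onto an open subset of $M$.

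By construction $\Phi$ intertwines $m^{e^t}$ and $\Psi_X^t$, so $\E\sim_\Phi X$. The image description follows from this intertwining: for $p=\Phi(v)$ one has $\Psi_X^{-t}(p)=\Phi(m^{e^{-t}}v)\to\pi(v)\in N$, and conversely any $p$ whose backward flow converges into $N$ can be recovered as $\Phi(v)$ by running the construction in reverse. Completeness when $X$ is complete is immediate, since $\Psi_X^T$ is then everywhere defined and the formula for $\Phi_T$ makes sense for every $v\in\nu_N$ once $T$ is large enough. Equivariance is preserved throughout, provided $\Phi_0$ was chosen $G$-equivariantly. The main obstacle is controlling the convergence of $\Phi_T$ and its derivatives; the Euler-like condition is precisely the quadratic estimate that makes this go through.
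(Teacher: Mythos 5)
The paper does not prove this statement; it is quoted with a citation to Bursztyn--Lima--Meinrenken and to Meinrenken's later account, so there is no in-paper proof to compare against. Assessed on its own terms, your renormalization scheme $\Phi_T := \Psi_X^T\circ\Phi_0\circ m^{e^{-T}}$ is a legitimate alternative route: the identity $\Psi_X^s\circ\Phi_T=\Phi_{T+s}\circ m^{e^s}$ shows that any $C^1_{\mathrm{loc}}$ limit of $\Phi_T$ automatically satisfies $\E\sim_\Phi X$, and the same idea gives uniqueness cleanly. This differs from what the cited sources actually do: Bursztyn--Lima--Meinrenken reduce to $M=\nu_N$ and run a Moser-type isotopy along the rescaled family $X_\lambda=\lambda^{-1}(m^\lambda)^*X$, which extends smoothly to $\lambda=0$ with $X_0=\E$ precisely because $X$ is Euler-like, while Meinrenken's later proof passes through the deformation to the normal cone. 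Both of those routes avoid the explicit infinite-time limit you take.

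The gap is the one you flag as ``the main obstacle,'' and it is more substantial than the sketch suggests. Quadratic vanishing of $X-\E$ gives a bound of order $e^{-2T}$ on the bracketed term in
\[
\Phi_{T+s}-\Phi_T=\Psi_X^T\bigl[\Psi_X^s\circ\Phi_0\circ m^{e^{-s}}-\Phi_0\bigr]\circ m^{e^{-T}}
\]
evaluated at $m^{e^{-T}}v$, but to conclude geometric summability you also need a growth estimate $\|D\Psi_X^T\|\leq C\,e^{(1+\epsilon)T}$ with $\epsilon<1$ along the backward-converging trajectories, together with the analogous estimates for all higher derivatives. These hold only on a sufficiently small star-shaped neighbourhood of the zero section, so you must first obtain convergence there and then propagate outward by the finite-time flow $\Psi_X^s$ rather than estimating for all $v$ at once. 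A related minor point: in the uniqueness paragraph, $\Phi|_N=\mathrm{id}_N$ and $\nu(\Phi)=\mathrm{id}$ do not ``pin down'' $\Phi$ near $N$ --- they fix it only modulo quadratic error --- so uniqueness again requires the same $e^{T}\cdot e^{-2T}\to 0$ cancellation; the formula you wrote is the right one, but the justification as stated is incomplete. As written, the proposal correctly identifies a viable strategy but asserts rather than carries out the estimates that make it work.
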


\subsection{Convenient Euler-like vector fields}
\begin{definition}
We call a smooth $\rho:\nu_N\to\R_{\geq 0}$ a \textbf{distance function} if:
\begin{enumerate}
    \item $\rho^{-1}(0)=N$
    \item $\rho\circ m^t = t^2 \cdot \rho$.
\end{enumerate}

A \textbf{tubular neighbourhood with distance} is a tubular neighbourhood with a distance function on $\nu_N$.
\end{definition}
\begin{rem}
A more intuitive name would be distance-square function but we use distance function for simplicity.
\end{rem}
\begin{rem}
Using Hadamard's lemma at $0$ and homogeneity one can show that each such distance function is of the form
\[
\rho(x) = g(x,x)
\]
for some fiber-wise inner product $g$ on $\nu_N$. 
\end{rem}
\begin{prop}\label{distance_derivative}
Let $\rho$ be a distance function. Then
\[
\L_\E\rho = 2\rho
\]
\end{prop}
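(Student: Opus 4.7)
The plan is a direct computation using the characterization of $\E$ through its flow together with the homogeneity property of $\rho$. Since $\L_\E \rho$ can be computed pointwise as the time-derivative of $\rho$ along the flow of $\E$, and since the flow of $\E$ is explicitly the fiber-wise scaling $m^{\exp t}$, the statement will reduce to a one-line calculation.

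Concretely, first I would write, for any $p \in \nu_N$,
\[
\L_\E \rho (p) \;=\; \frac{d}{dt}\bigg|_{t=0} \rho\bigl(\Psi_\E^t(p)\bigr).
\]
Next, I would substitute the formula $\Psi_\E^t = m^{\exp t}$ from the definition of the Euler vector field, and apply the second defining property of a distance function, $\rho \circ m^s = s^2 \rho$, with $s = \exp t$. This gives $\rho(\Psi_\E^t(p)) = e^{2t}\,\rho(p)$. Differentiating at $t=0$ yields $2\rho(p)$, which is the desired identity.

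There is no real obstacle here; the only thing to be careful about is that the flow identity $\Psi_\E^t = m^{\exp t}$ is valid for all $t \in \R$ on $\nu_N$ (positive scalar multiplication extends to negative $t$ via the exponential reparameterization), so the derivative at $t=0$ makes sense without any domain issues. The argument is global on $\nu_N$ and requires no local coordinates, though one could alternatively verify it in bundle coordinates by writing $\rho$ as $g(x,x)$ for a fiber-wise inner product $g$ and applying $\E = \sum_i x^i \partial/\partial x^i$ directly, which recovers the factor $2$ from the Euler identity for homogeneous functions of degree $2$.
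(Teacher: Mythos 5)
Your proof is correct and is essentially the same argument as the paper's, which simply invokes Euler's homogeneous function theorem; your computation via $\Psi_\E^t = m^{\exp t}$ and the homogeneity $\rho\circ m^s = s^2\rho$ is exactly the standard proof of that theorem spelled out.
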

\begin{proof}
This is Euler's homogeneous function theorem.
\end{proof}
\begin{prop}\label{distance_proper}
Let $\rho:\nu_N\to\R_{\geq 0}$ be a distance function. Then 
\[
(\rho, m^0):\nu_N\to \R_{\geq 0}\times N
\]
is proper.
\end{prop}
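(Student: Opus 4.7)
The plan is to reduce properness to showing that, for every compact $K \subset N$ and every $R \geq 0$, the set
\[
B_{R,K} = \{v \in \nu_N : m^0(v) \in K,\ \rho(v) \leq R\}
\]
is compact. Any compact $C \subset \R_{\geq 0}\times N$ is contained in a product $[0,R]\times K$, and $(\rho,m^0)^{-1}(C)$ is closed in $\nu_N$ by continuity of $(\rho, m^0)$ and contained in $B_{R,K}$, so the reduction is immediate.

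To prove $B_{R,K}$ is compact, I will invoke the preceding remark, which writes $\rho(v) = g(v,v)$ for a fibre-wise inner product $g$ on $\nu_N$. Thus $B_{R,K}$ is the closed ball bundle over $K$ of squared-radius $R$ with respect to $g$. I would cover $K$ by finitely many compact sets $K_1, \dots, K_m$, each contained in an open trivializing neighbourhood $U_i \subset N$ for $\nu_N$. In the trivialization $\nu_N|_{U_i} \cong U_i \times \R^k$, the metric $g$ becomes a continuous family of positive definite symmetric matrices $G_q$, $q \in U_i$; by compactness of $K_i$ and continuity there exists $c_i > 0$ with $x^T G_q x \geq c_i \norm{x}^2$ for all $q \in K_i$ and $x \in \R^k$. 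The portion of $B_{R,K}$ above $K_i$ therefore lies in the compact set $K_i \times \{\norm{x} \leq \sqrt{R/c_i}\}$ and is itself compact, being closed in it; $B_{R,K}$ is the union of these finitely many compact pieces.

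One could alternatively avoid the remark and argue directly from the defining properties $\rho \circ m^t = t^2 \rho$ and $\rho^{-1}(0) = N$: in a trivialization $\rho(q,x)$ is continuous, homogeneous of degree $2$ in $x$, and strictly positive for $x \neq 0$, so $\min_{q \in K_i,\ \norm{x} = 1}\rho(q,x) = c_i > 0$, and homogeneity upgrades this to $\rho(q,x) \geq c_i \norm{x}^2$, giving the same fibre-wise bound.

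I do not anticipate any serious obstacle: this is the standard fact that a closed ball bundle over a compact base is compact, repackaged for the quadratic function $\rho$. The only point requiring a little care is uniformizing the fibre-wise quadratic bound over $K$, which the compactness of the trivializing cover delivers.
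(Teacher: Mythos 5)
Your proof is correct and takes essentially the same approach as the paper: reduce to a compact subset $K\subset N$ and a bound $\rho\le R$, cover $K$ by finitely many compact pieces over which $\nu_N$ trivializes, bound $\rho$ below on the unit-sphere bundle over each piece by a positive constant using compactness, and use homogeneity to trap the preimage inside a compact ball bundle. Your second, ``alternative'' paragraph (minimizing $\rho$ on $K_i\times\{\lVert x\rVert=1\}$ directly, without invoking $\rho=g(\cdot,\cdot)$) is precisely the argument the paper gives.
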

\begin{proof}
Using the projections to $\R_{\geq 0}$ and to $N$ we can reduce to the following case --- let $A\subset N$ be a compact subspace and let $b\in\R_{\geq 0}$. Then
\[
\rho^{-1}([0,b])\cap \nu_N\lvert_A
\]
is compact.

For any $q\in A$ let $U_q\subset N$ be neighbourhood of $q$ such that $\nu_N\lvert_{\overline{U_q}}$ is trivial. Since $A$ is compact there exists a finite cover of $A$ by such $U_{q_i}$ and it is enough to show that each
\[
\rho^{-1}([0,b]) \cap \nu_N\lvert_{\overline{U_{q_i}}\cap A}
\]
is compact. So we may assume $A\subset \overline{U_q}$. 

Identify $\nu_N\lvert_{\overline{U_q}}$ with $\overline{U_q}\times \R^{m-n}$. The set $A \times S^{m-n-1}$ is compact so $\rho$ has a minimum $c>0$ on it. Using the homogeneity of $\rho$ we get that 
\[
\rho^{-1}([0,c]) \cap \nu_N\lvert_A \subset A \times \overline{B^{m-n}_1}
\]
and therefore
\[
\rho^{-1}([0,b]) \cap \nu_N\lvert_A \subset A \times \overline{B^{m-n}_{b/c}}
\]
so $\rho^{-1}([0,b]) \cap \nu_N\lvert_A$ is compact as a closed subset of a compact set.
\end{proof}
\begin{definition}
Let $\Phi:\nu_N\to M$ be a complete tubular neighbourhood embedding. It is \textbf{extendable} in $M$ if there exist an open neighbourhood $U$ of $\overline{\Phi(\nu_N)}$ and a smooth extension of 
\[
m^0\circ \Phi^{-1}:\Phi(\nu_N)\to N
\]
to $U$.
\end{definition}
\begin{example}
Let $M=\R^2\setminus \{(0,0)\}$ and $N=\R_{>0}\times\{0\}$. Then $\R_{>0}\times\R\subset \R^2$ is the image of a complete tubular neighbourhood of $N$ which is not extendable --- the boundary contains the $Y$-axis without 0.
\end{example}
\begin{prop}\label{distance_close}
Let $\Phi:\nu_N\to M$ be a complete tubular neighbourhood that is extendable and let $\rho$ be a distance function.

Then for all $b\in\R_{\geq 0}$ the set $\Phi\left(\rho^{-1}([0,b])\right)$ is closed in $M$.
\end{prop}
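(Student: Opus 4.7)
The plan is to verify closedness by sequences: take $p_n \in \Phi(\rho^{-1}([0,b]))$ with $p_n \to p$ in $M$ and show that $p$ lies in the same set. The two ingredients I need are the extension map from extendability (to control where the ``base points'' of the $p_n$'s go) and the properness statement of Proposition \ref{distance_proper} (to extract a convergent subsequence of preimages in $\nu_N$).

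First I would observe that $p \in \overline{\Phi(\nu_N)} \subset U$, where $U$ is the open neighbourhood of $\overline{\Phi(\nu_N)}$ on which the smooth extension $\pi:U\to N$ of $m^0\circ\Phi^{-1}$ is defined. Setting $v_n := \Phi^{-1}(p_n) \in \rho^{-1}([0,b])$, continuity of $\pi$ gives
\[
m^0(v_n) = \pi(p_n) \longrightarrow \pi(p) \in N.
\]
In particular $K := \{m^0(v_n)\}_n \cup \{\pi(p)\}$ is a compact subset of $N$.

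Next I would use Proposition \ref{distance_proper}: since $(\rho,m^0):\nu_N\to\R_{\geq 0}\times N$ is proper, the preimage $(\rho,m^0)^{-1}([0,b]\times K)$ is compact, and it contains every $v_n$. Extracting a convergent subsequence $v_{n_k} \to v$, continuity of $\Phi$ yields $\Phi(v) = \lim \Phi(v_{n_k}) = p$, while continuity of $\rho$ yields $\rho(v) = \lim \rho(v_{n_k}) \leq b$. Hence $p = \Phi(v) \in \Phi(\rho^{-1}([0,b]))$, as needed.

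The only step where anything subtle happens is the very first one, namely that $p$ lies in the domain of the extended base-point map $\pi$; after that point, properness does all the work. So the proof is essentially the observation that extendability allows one to ``see'' the limit's base point $\pi(p)$ in $N$, reducing the problem to compactness of a proper preimage.
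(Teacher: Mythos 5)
Your proof is correct and follows essentially the same route as the paper's: use extendability of the tubular neighbourhood to make sense of the limit's base point in $N$, then invoke the properness of $(\rho,m^0)$ from Proposition~\ref{distance_proper} to extract a convergent subsequence in $\nu_N$. The only cosmetic difference is that the paper applies properness over a compact closure $\overline{W}$ of a neighbourhood of the limit base point, whereas you apply it over the compact set $\{m^0(v_n)\}_n\cup\{\pi(p)\}$ directly; both are valid.
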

\begin{proof}
Let ${\{v_i\}}_{i\in\N} \subset \Phi\left(\rho^{-1}([0,b])\right)$ be a sequence which converges to $v_0$. We show that $v_0\in \Phi\left(\rho^{-1}([0,b])\right)$.

Let $\pi:U\to N$ be the extension of $m^0\circ \Phi^{-1}$ to some neighbourhood of $\overline{\Phi(\nu_N)}$. Then $\pi(v_0)\in N$ is well defined and ${\{\pi(v_i)\}}_{i\in\N}$ converges to it. 

Take $W$ to be a neighbourhood of $\pi(v_0)$ in $N$ with a compact closure. By Proposition \ref{distance_proper} the set $\rho^{-1}([0,b]) \cap \nu_N\lvert_{\overline{W}}$ is compact so ${\{\Phi^{-1}(v_i)\}}_{i\in\N}$ has a sub-sequence which converges to a point in it. But the limit in $M$ is unique so $v_0\in \Phi\left(\rho^{-1}([0,b])\right)$.
\end{proof}
\begin{cor}\label{extending_functions}
Let $(\Phi, \rho)$ be an extendable tubular neighbourhood with distance and let $f:\R_{\geq 0} \to \R$ be a smooth function such that 
\[
\exists b\in\R_{\geq 0},c\in\R:\: f([b,\infty))=\{c\}.
\]

Then $f\circ\rho\circ\Phi^{-1}:\Phi(\nu_N)\to \R$ can be smoothly extended to $M$ by setting it to $c$ outside of $\Phi\left(\rho^{-1}([0,b])\right)$.
\end{cor}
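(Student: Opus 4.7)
The plan is to cover $M$ by two open sets on which the claimed extension is individually smooth, and then invoke the standard gluing lemma. First I would set $V := M \setminus \Phi\bigl(\rho^{-1}([0,b])\bigr)$; by Proposition \ref{distance_close}, $\Phi\bigl(\rho^{-1}([0,b])\bigr)$ is closed in $M$, so $V$ is open. Together with the open set $\Phi(\nu_N)$ it covers $M$, because $\Phi\bigl(\rho^{-1}([0,b])\bigr) \subset \Phi(\nu_N)$ gives
\[
\Phi(\nu_N) \cup V = \Phi(\nu_N) \cup \bigl(M \setminus \Phi(\rho^{-1}([0,b]))\bigr) = M.
\]

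Next I would define $F_1 := f \circ \rho \circ \Phi^{-1}$ on $\Phi(\nu_N)$ and $F_2 :\equiv c$ on $V$. Each is smooth on its own open domain: $F_1$ is a composition of smooth maps on an open subset of $M$, and $F_2$ is constant. The only thing to check is agreement on the overlap
\[
\Phi(\nu_N) \cap V = \Phi\bigl(\rho^{-1}((b,\infty))\bigr).
\]
For any $p$ in this set, $\rho(\Phi^{-1}(p)) > b$, so by the hypothesis $f([b,\infty)) = \{c\}$ we get $F_1(p) = f(\rho(\Phi^{-1}(p))) = c = F_2(p)$. Hence $F_1$ and $F_2$ glue to a well-defined smooth function $F$ on $M$ that agrees with $f\circ\rho\circ\Phi^{-1}$ on $\Phi(\nu_N)$ and equals $c$ on $V$, which is exactly the extension described in the statement.

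The only non-routine step is justifying that $V$ is open, i.e.\ that $\Phi\bigl(\rho^{-1}([0,b])\bigr)$ is closed in $M$; this is precisely where the extendability of the tubular neighbourhood enters, via Proposition \ref{distance_close}. The extendability hypothesis cannot be dropped: without it the sublevel set $\Phi(\rho^{-1}([0,b]))$ can fail to be closed in $M$ (as in the example preceding Proposition \ref{distance_close}), and then the proposed extension is not even continuous.
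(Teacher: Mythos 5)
Your proof is correct and takes exactly the approach of the paper: cover $M$ by the two open sets $\Phi(\nu_N)$ and $M\setminus\Phi\bigl(\rho^{-1}([0,b])\bigr)$ (using Proposition \ref{distance_close} for openness of the latter), check smoothness and agreement on the overlap, and glue. You have merely spelled out the overlap check and the role of extendability, which the paper leaves implicit.
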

\begin{proof}
The extension is smooth on both sets in the cover 
\[
\{M\setminus\Phi\left(\rho^{-1}([0,b])\right), \Phi(\nu_N)\}.
\]
which is open by Proposition \ref{distance_close}.
\end{proof}
\begin{definition}
Let $X$ be a complete Euler-like vector field along $N$ and let $\Phi:\nu_N\to M$ be the complete tubular neighbourhood induced by $X$. We call $X$ \textbf{convenient} if the following conditions hold:
\begin{enumerate}[label=(\subscript{CE}{{\arabic*}})]
    \item\label{conv_euler_extendable} The tubular neighbourhood $\Phi:\nu_N\to M$ is extendable
    \item\label{conv_euler_conical} The following sets coincide
\[
\{X\neq 0\} = \{p\in M\setminus N: \lim_{t\to -\infty}\Psi_X^t(p)\text{ exists and is in }N\}
\]
\end{enumerate}
\end{definition}
\begin{rem}
Completeness, condition \ref{conv_euler_conical} and Theorem \ref{euler_like_tubular} imply
\[
\Phi(\nu_N) = \{X\neq 0\}\sqcup N
\]
\end{rem}
\begin{definition}
Let $X$ be a convenient Euler-like vector field along $N$ and $\Phi:\nu_N\to M$ its induced tubular neighbourhood embedding. We call the data $\left(X, \Phi \right)$ a \textbf{convenient tubular neighbourhood} of $N$ in $M$.
\end{definition}
\begin{notation}\label{bump_function}
Let $0<a<b$. We denote by $h_{a,b}:\R_{\geq 0}\to [0,1]$ a smooth monotone bump function that is $\equiv 1$ on $[0,a]$ and $\equiv 0$ on $[b,\infty)$.
\end{notation}

\begin{lem}\label{convenient_euler}
Let $V$ be a neighbourhood of $N$ and let $X$ be an Euler-like vector field along $N$.

Then there exists a smooth function $\varphi:M\to [0,1]$ that satisfies:
\begin{enumerate}
    \item $\varphi$ is supported in $V$.
    \item $\varphi \equiv 1$ on a neighbourhood of $N$.
    \item The vector field $\varphi\cdot X$, defined on $M$, is a convenient Euler-like vector field along $N$.
\end{enumerate}

If $G$ is a compact Lie group acting on $M$, the sets $N,V$ are $G$-invariant and the vector field $X$ is $G$-invariant then $\varphi$ can be chosen to be $G$-invariant. 
\end{lem}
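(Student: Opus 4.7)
The strategy is to build $\varphi$ by pushing forward a radial bump on $\nu_N$ through the (possibly non-complete) tubular neighbourhood embedding $\Phi:U\to M$ that $X$ induces via Theorem~\ref{euler_like_tubular}, arranged so that the cutoff is supported in a set closed in $M$. First I would pick a distance function $\rho$ on $\nu_N$ (coming from a fibre-wise inner product, averaged over $G$ in the equivariant case) and invoke normality of $M$ to obtain an open neighbourhood $V'$ of $N$ with $\overline{V'}\subset V\cap \Phi(U)$. Using paracompactness of $N$ (and $G$-invariant partitions of unity if needed), I would construct a smooth positive $\sigma:N\to\R_{>0}$ such that $B:=\{v\in \nu_N : \rho(v)\leq \sigma(\pi(v))\}\subset U$ and $\Phi(B)\subset V'$. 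Then $\Phi(B)$ is closed in $M$: it is closed in the open set $\Phi(U)$ as the image of the closed subset $B$ of $U$ under the diffeomorphism $\Phi|_U$, and its closure in $M$ is trapped inside $\overline{V'}\subset \Phi(U)$, so it already equals its closure.

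Next I would choose $0<a<b<1$, set $\varphi_0(v):=h_{a,b}(\rho(v)/\sigma(\pi(v)))$ on $\nu_N$ via Notation~\ref{bump_function}, and define $\varphi:M\to [0,1]$ by $\varphi_0\circ \Phi^{-1}$ on $\Phi(U)$ and by $0$ on $M\setminus \Phi(\{\rho\leq b\sigma\circ \pi\})$. The two formulas agree on the overlap (where $\varphi_0$ vanishes), and their domains form an open cover of $M$ thanks to the closedness from the previous paragraph, so $\varphi$ is smooth. By construction $\varphi\equiv 1$ on a neighbourhood of $N$, $\operatorname{supp}\varphi\subset V$, and the locality of being Euler-like (Proposition~\ref{euler_like_linearity}) ensures that $\varphi\cdot X$ is still Euler-like along $N$.

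To check convenience I would transport $\varphi\cdot X$ through $\Phi$ to $\varphi_0\cdot \E$ on $U$, which preserves the fibres of $\nu_N\to N$. In each fibre, writing $r=\sqrt{\rho}$, the radial flow solves $\dot r=h_{a,b}(r^2/\sigma(p))\,r$: the right-hand side equals $r$ for $r^2<a\sigma$, is strictly positive on $(a\sigma,b\sigma)$ and vanishes on $[b\sigma,\infty)$. A direct integration shows that forward trajectories in $\{0<r^2<b\sigma\}$ asymptote to $r^2=b\sigma$ as $t\to+\infty$, backward trajectories tend to $r=0$ as $t\to-\infty$, and points with $r^2\geq b\sigma$ are stationary; hence $\varphi\cdot X$ is complete. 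Theorem~\ref{euler_like_tubular} then identifies the image of the induced complete tubular neighbourhood as $\Phi'(\nu_N)=N\sqcup \Phi(\{0<\rho<b\sigma\circ \pi\})=N\sqcup\{\varphi\cdot X\neq 0\}$, establishing the conical condition~\ref{conv_euler_conical}; its closure $\Phi(\{\rho\leq b\sigma\circ \pi\})$ sits inside $\Phi(U)$, where the smooth retraction $\pi\circ \Phi^{-1}$ agrees with $m^0\circ (\Phi')^{-1}$ and supplies the extension required for~\ref{conv_euler_extendable}. The $G$-invariance of $\varphi$ is automatic from the $G$-invariant choices of $\rho,V',\sigma$ together with the $G$-equivariance of $\Phi$ in Theorem~\ref{euler_like_tubular}.

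The main obstacle I expect is the closedness of $\Phi(B)$ in $M$: without it the two-chart definition of $\varphi$ fails to be smooth and extendability~\ref{conv_euler_extendable} breaks. The preliminary shrinking to $V'$ with $\overline{V'}\subset \Phi(U)$ is what confines the closure of $\Phi(B)$ inside the domain of $\Phi^{-1}$, and everything downstream is a mechanical verification via the explicit fibre-wise flow.
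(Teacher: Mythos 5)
Your proposal follows essentially the same strategy as the paper: push a radial bump $h_{a,b}(\rho/\sigma\circ\pi)$ through the tubular neighbourhood embedding $\Phi$, extend by zero using the closedness guaranteed by the preliminary shrinking inside $\Phi(U)$, and check the conditions via fibre-wise analysis. The only differences are in the technical verification of completeness and condition~\ref{conv_euler_conical}, where the paper uses Proposition~\ref{distance_proper} to get compact fibre-wise support and a differential inequality for the backward limit, while you integrate the one-dimensional radial ODE $\dot r=rh(r^2/\sigma)$ directly; both are sound and the outcome is the same.
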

\begin{proof}
Using Theorem \ref{euler_like_tubular}, let $U\subset \nu_N$ and $\Phi:U\to M$ be the tubular neighbourhood induced by $X$. By normality of $M$ and the assumption that $N$ is closed in $M$ there exist a neighbourhood $W$ of $N$ in $M$ such that $\overline{W}\subset V\cap \Phi(U)$.

Let $\rho$ be any distance function on $\nu_N$ and let $\delta:N\to \R_{>0}$ be a smooth function such that
\[
\{p\in\nu_N : \rho(p) < \delta(m^0(p))\} \subset \Phi^{-1}(W)
\]
i.e., for each $q\in N$, the set $\{\rho < \delta(q) \}$ in the fiber $\nu_{N,q}$ is contained in $\Phi^{-1}(W)$. 

Now let $h \coloneqq h_{\nicefrac{1}{3}, \nicefrac{2}{3}}$ and define $\varphi':\nu_N\to [0,1]$ by
\[
\varphi'(x) = h\left( \frac{\rho(x)}{\delta(m^0(x))}\right).
\]

We claim that $\varphi'\circ \Phi^{-1}:\Phi(U)\to[0,1]$ can be smoothly extended to $M$ by setting it to $0$ outside of $\Phi(U)$. Indeed, the above extension is smooth on $M\setminus\overline{W}$ since it is constant there, it is smooth on $\Phi(U)$ by definition and these open sets form an open cover of $M$. Denote the above extension by $\varphi:M\to [0,1]$, we now show it satisfies the requirements.

\underline{$\varphi$ is supported in $V$}: This follows from construction.

\underline{$\varphi\cdot X$ is Euler-like}: Being Euler-like is a local condition near $N$ and $\varphi\cdot X = X$ in a small neighbourhood of each $q\in N$.

\underline{The vector field $\varphi\cdot X$ is complete}: Let $p\in M$. If $p\notin \text{supp}\varphi$ then $\left(\varphi\cdot X\right)(p)\equiv0$ in a neighbourhood of $p$ and the local flow around $p$ is complete.
Otherwise $p\in W\subset \Phi(U)$ so we can define $q=m^0(\Phi^{-1}(p))$. 

Since $\E\sim_\Phi X$ and $\E$ is tangent to the fibers of $\nu_N$ over $N$ it is enough to show that $\left(\varphi'\cdot \E\right)\lvert_{\nu_{N,q} \cap U}$ is complete. Note that
\[
\text{supp}\left(\varphi'\lvert_{\nu_{N,q}\cap U}\right) \subset \rho^{-1}([0,\delta(q)]) \cap \nu_{N,q}
\]
here the latter set is compact by Proposition \ref{distance_proper} and the former is a closed subset of it and thus compact. We get that $\left(\varphi'\cdot \E\right)\lvert_{\nu_{N,q}}$ has compact support and is therefore complete.

\underline{Condition \ref{conv_euler_extendable} is satisfied}: By construction, the tubular neighbourhood $\widetilde{\Phi}$ induced by $\varphi\cdot X$ is contained in $W$ so its closure is contained in $\Phi(U)$. The integral curves of $\varphi\cdot X$ are contained in those of $X$ and so the two projections
\[
m^0\circ \Phi^{-1},\: m^0\circ \widetilde{\Phi}^{-1}
\]
coincide on $\widetilde{\Phi}(\nu_N)$. This is exactly an extension of $m^0\circ \widetilde{\Phi}^{-1}$ to a neighbourhood of $\overline{\widetilde{\Phi}(\nu_N)}$.

\underline{Condition \ref{conv_euler_conical} is satisfied}: For the inclusion $\supset$, if $p\in M\setminus N$ is a point with flow limit in $N$ then necessarily $X(p)\neq 0$ otherwise it stays in place. 

For the other direction let $p\in M$ with $X(p)\neq 0$. By construction $p\in\Phi(U)$ so we may identify $p$ with its preimage in $U$ and $X$ with $\varphi'\cdot\E$. We show that the limit of $p$ under the flow of $\varphi'\cdot\E$ lies in $N$. 

Denote $\delta_0 = \frac{\delta(m^0(p))}{3}$. Since $X=\E$ on $\rho^{-1}([0,\delta_0])$ and the limit of the flow of $\E$ lies in $N$ for all points it is enough to show that $(\rho\circ \Psi_X^{-t_0})(p) < \delta_0$ for some $t_0>0$. 

Assume by way of contradiction that $\left(\rho\circ \Psi_X^{-t}\right)(p) > \delta_0$ for all $t>0$. One can check that $\frac{d}{dt}\left(\varphi'\circ \Psi_X^{-t}\right)(p) \geq 0$ and therefore $\left(\varphi'\circ \Psi_X^{-t}\right)(p)$ is non-decreasing in $t$. Using the above and Proposition \ref{distance_derivative} we get
\begin{align*}
\frac{d}{dt} \left(\rho\circ \Psi_X^{-t}\right)(p) & = -\L_X\rho\left( \Psi_X^{-t}(p) \right) \\
&= -\left(\varphi'\circ \Psi_X^{-t}\right)(p) \L_\E\rho\left( \Psi_X^{-t}(p) \right) \\
&= -2\left(\varphi'\circ \Psi_X^{-t}\right)(p)\rho\left( \Psi_X^{-t}(p)\right) \\
& \leq -2\varphi'(p)\left(\rho\circ \Psi_X^{-t}\right)(p) \\
& \leq -2\varphi'(p)\delta_0 <0
\end{align*}
so it follows that 
\[
\forall t\in \R: \left(\rho\circ \Psi_X^{-t}\right)(p) \leq \rho(p) -2\varphi'(p)\delta_0t
\]
which is a contradiction.

In the $G$-invariant case one only needs to take $W,\rho,\delta$ to be $G$-invariant. For $W$ this can be achieved by intersecting it with all its $G$-translates and for $\rho,\delta$ by averaging over $G$.
\end{proof}

\subsection{Shrinking of tubular neighbourhoods}\hfill

Let $\left(X, \Phi, \rho \right)$ be a convenient Euler-like vector field, its induced tubular neighbourhood and a distance function on $\nu_N$. Let $0<a<b$ and write $h\coloneqq h_{a,b}$.

By Corollary \ref{extending_functions} the function $(h\circ\rho\circ \Phi^{-1})$ can be smoothly extended to $M$ by setting it to $0$ outside of $\Phi\left(\rho^{-1}([0,b])\right)$. Similarly to the proof of Lemma \ref{convenient_euler}, denoting this extension by $\varphi$ we get that $\widetilde{X}\coloneqq\varphi\cdot X$ is also a convenient Euler-like vector field along $N$. Let $\widetilde{\Phi}$ be its induced tubular neighbourhood.
\begin{definition}
The data $\left(\widetilde{X}, \widetilde{\Phi}, \rho \right)$ is the \textbf{shrinking of $\left(X, \Phi, \rho \right)$ by $h$}.
\end{definition}
\begin{rem}
\hfill
\begin{enumerate}
    \item $\widetilde{\Phi}(\nu_N) \subset \Phi(\nu_N)$ as a fiber sub-bundle using the projection $m^0\circ \Phi^{-1}$.
    \item $\rho:\nu_N\to \R_{\geq 0}$ is the same function on the normal bundle but the functions
\[
\rho\circ\Phi^{-1},\:\rho\circ\widetilde{\Phi}^{-1}
\]
are different on $\widetilde{\Phi}(\nu_N)\setminus \widetilde{\Phi}\left(\rho^{-1}([0,a])\right)$.
\end{enumerate}
\end{rem}
\begin{lem}\label{shrinking_distance_level_sets}
Let $\left(\widetilde{X}, \widetilde{\Phi}, \rho \right)$ be the shrinking of $\left(X, \Phi, \rho \right)$ by $h$.

Then each level set of $\rho\circ\widetilde{\Phi}^{-1} $ inside $\widetilde{\Phi}(\nu_N)$ is also a level set of $\rho\circ\Phi^{-1}$.
\end{lem}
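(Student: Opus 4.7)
The plan is to prove that on $\widetilde{\Phi}(\nu_N)$ the function $\rho\circ\widetilde{\Phi}^{-1}$ is a strictly monotone function of $\rho\circ\Phi^{-1}$; the coincidence of level sets follows immediately. Throughout, I abbreviate $g=\rho\circ\Phi^{-1}$ and $f=\rho\circ\widetilde{\Phi}^{-1}$.

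The key preliminary step is to show that $\Phi=\widetilde{\Phi}$ on $\{\rho<a\}$, so in particular $f=g$ on $\Phi(\{\rho<a\})$. On this set $\varphi=h_{a,b}(g)\equiv 1$ and hence $\widetilde{X}=X$; applying the uniqueness assertion of Theorem~\ref{euler_like_tubular} to $X=\widetilde{X}$ in a small open neighbourhood of $N$ gives $\Phi=\widetilde{\Phi}$ near the zero section, and this propagates to all of $\{\rho<a\}$ by conjugating the common flow of $X=\widetilde{X}$ via $\Phi\circ\Psi_\E^s=\Psi_X^s\circ\Phi$ and the analogous identity for $\widetilde{\Phi}$, using that $\{\rho<a\}$ is invariant under $\Psi_\E^s$ for $s\le 0$. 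Translating Proposition~\ref{distance_derivative} through the two embeddings gives $\L_X g=2g$ on $\Phi(\nu_N)$ and $\L_{\widetilde{X}}f=2f$ on $\widetilde{\Phi}(\nu_N)$; since $\widetilde{X}=h(g)\cdot X$ on $\widetilde{\Phi}(\nu_N)$, also $\L_{\widetilde{X}}g=2h(g)g$.

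Because $h>0$ on $[0,b)$, the function $g$ is strictly increasing along each $\widetilde{X}$-integral curve in $\widetilde{\Phi}(\nu_N)\setminus N$. Eliminating the flow parameter, on any such curve $f$ viewed as a function of $g$ satisfies the autonomous ODE $df/dg=f/(h(g)g)$. Every $\widetilde{X}$-integral curve, traced backward, enters $\Phi(\{\rho<a\})$ where $f=g$, supplying the same initial condition on all curves. Hence all curves determine the same strictly increasing function $F\colon[0,b)\to[0,\infty)$ with $F(g)=g$ for $g<a$ and $F(0)=0$, and $f=F\circ g$ on all of $\widetilde{\Phi}(\nu_N)$; strict monotonicity of $F$ is immediate from $F'(g)=F(g)/(h(g)g)>0$.

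The main obstacle is the preliminary coincidence $\Phi=\widetilde{\Phi}$ on $\{\rho<a\}$: Theorem~\ref{euler_like_tubular} asserts uniqueness only for the maximal tubular neighbourhood of a globally defined Euler-like vector field, whereas $X$ and $\widetilde{X}$ agree only on the open set $\Phi(\{\rho<a\})$, so one has to propagate the germ-agreement carefully using the common flow. Once that is in place, the ODE step is routine.
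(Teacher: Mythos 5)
Your proof is correct, and it takes a genuinely different route from the paper's. The paper identifies $M=\nu_N$, $\Phi=\operatorname{Id}$, $\widetilde{X}=(h\circ\rho)\cdot\E$, explicitly solves for the flow of $\widetilde{X}$ as a reparametrized scalar multiplication $\Psi^t_{\widetilde{X}}(v)=m^{\exp_{h,\rho(v)}(t)}(v)$ (where $\exp_{h,s}$ is defined by an auxiliary scalar ODE), and then, given a point in the $(\rho\circ\widetilde{\Phi}^{-1})$-level set at value $c$, flows it to a fixed $\rho$-level in $\{\rho<a\}$, reads off $c'$, and finally upgrades the resulting containment of level sets to equality by observing that both meet each fiber in an embedded sphere of the same dimension. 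You instead eliminate the time parameter entirely, deriving the autonomous ODE $df/dg=f/(h(g)g)$ from $\L_{\widetilde{X}}f=2f$ and $\L_{\widetilde{X}}g=2h(g)g$, and invoke ODE uniqueness together with the common initial condition $f=g$ on $\{g<a\}$. Both arguments rest on the same preliminary fact that $\widetilde{\Phi}=\Phi$ on $\{\rho<a\}$; the paper asserts this in one line, while you treat it carefully. In fact, your propagation step is a valid but roundabout route: one can get the coincidence on all of $\{\rho<a\}$ in one shot by restricting to the open submanifold $M'=\Phi(\{\rho<a\})$, on which $X|_{M'}=\widetilde{X}|_{M'}$, and applying the uniqueness clause of Theorem~\ref{euler_like_tubular} there, noting that both $\Phi|_{\{\rho<a\}}$ and $\widetilde{\Phi}|_{\widetilde{\Phi}^{-1}(M')}$ are maximal tubular neighbourhoods for $X|_{M'}$ (every backward trajectory in $M'$ limits to $N$). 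One small gap in your closing sentence: from $f=F\circ g$ with $F$ strictly increasing you immediately get $\{f=c\}=\{g=F^{-1}(c)\}\cap\widetilde{\Phi}(\nu_N)$, but to conclude that this is the full $g$-level set you should also note that $\widetilde{\Phi}(\nu_N)=\{h\circ g>0\}$ is a sublevel set $\{g<b'\}$ of $g$ (monotonicity of $h$), so any $g$-level set it meets is contained in it; this plays the role of the paper's sphere-in-fiber argument.
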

\begin{proof}
We identify $M=\nu_N,X=\E,\Phi=\operatorname{Id}$ and $\widetilde{X}=(h\circ\rho)\cdot\E$. By construction 
\[
\widetilde{\Phi}(\nu_N)=\{\widetilde{X}\neq 0\} = \{h\circ\rho > 0 \}.
\]

Let $c\in\R_{>0}$. If we show that there exists $c'\in\R_{>0}$ such that
\[
{\left(\rho\circ\widetilde{\Phi}^{-1}\right)}^{-1}(c) \subset
\rho^{-1}(c')
\]
then the result will follow by the fact that the intersection of both sets with a fiber is diffeomorphic to the sphere.

Let $s\in \R_{>0}$. Denote by $\exp_{h,s}(t)$ the solution to the ODE defined by the equations
\begin{align*}
\frac{d}{dt}&\exp_{h,s}(t) =h\left(s\cdot \exp_{h,s}^2(t)\right)\cdot \exp_{h,s}(t) \\
&\exp_{h,s}(0)=1.
\end{align*}
This solution exists for all $t\in \R$ because $\exp_{h,s}(t)$ is bounded by $\exp(t)$. Note that $\exp_{h,s}(t)$ is positive and monotone in $t$.

We claim that the flow $\Psi^t_X(v)$ of $X$ is given by
\[
\Psi^t_X(v) = m^{\exp_{h,\rho(v)}(t)}(v).
\]
Indeed, taking the derivative of the RHS by $t$ we get
\begin{align*}
\frac{d}{dt}\Big\lvert_{t_0}m_X^{\exp_{h,\rho(v)}(t)}(v) & = 
\frac{d}{dt}\Big\lvert_{\log\exp_{h,\rho(v)}(t_0)}m_X^{\exp(t)}(v) \cdot \frac{d}{dt}\Big\lvert_{t_0}\log\exp_{h,\rho(v)}(t) \\
& = \E\left(m_X^{\exp_{h,\rho(v)}(t_0)}(v)\right) \cdot \frac{\frac{d}{dt}\Big\lvert_{t_0}\exp_{h,\rho(v)}(t)}{\exp_{h,\rho(v)}(t_0)} \\
& = \E\left(m_X^{\exp_{h,\rho(v)}(t_0)}(v)\right) \cdot h\left(\rho(v)\cdot \exp_{h,\rho(v)}^2(t_0)\right) \\
& = \E\left(m_X^{\exp_{h,\rho(v)}(t_0)}(v)\right) \cdot h\left( \left(\rho \circ m_X^{\exp_{h,\rho(v)}(t_0)}\right)(v)\right) \\
& = \E\left(m_X^{\exp_{h,\rho(v)}(t_0)}(v)\right) \cdot \left(h\circ \rho\right)\left( m_X^{\exp_{h,\rho(v)}(t_0)}(v)\right) \\
& = \widetilde{X}\left(m_X^{\exp_{h,\rho(v)}(t_0)}(v)\right).
\end{align*}

Now let $v_0\in {\left(\rho \circ \widetilde{\Phi}^{-1}\right)}^{-1}(c)$ and denote $t_0 = \sqrt{\nicefrac{a}{2c}}$ and 
\[
v_1 = \left(m^{t_0}\circ \widetilde{\Phi}^{-1} \right)(v_0).
\] 

Since $\widetilde{X}=\E$ on $\{\rho < a\}$, the embedding $\widetilde{\Phi}\Big\lvert_{\{\rho < a\}}:\{\rho < a\} \to \nu_N$ is the identity on that set. By homogeneity 
\[
\rho(v_1) = \left (\rho\circ m^{\sqrt{\nicefrac{a}{2c}}}\right)\left(\widetilde{\Phi}^{-1}(v_0) \right) = \frac{a}{2}
\]
so $\widetilde{\Phi}$ is the identity on $v_1$. Using the fact that $m^{\exp(t)}\sim_{\widetilde{\Phi}} \Psi_X^t$ we get
\begin{align*}
v_1 = \left(\widetilde{\Phi}\circ m^{t_0}\circ \widetilde{\Phi}^{-1} \right)(v_0) = \Psi_X^{\log{t_0}}(v_0)
\end{align*}
and therefore
\begin{align*}
v_0 &= \Psi_X^{-\log{t_0}}(v_1) \\
&= m^{\exp_{h,\rho(v_1)}(-\log{t_0})}(v_1) \\
&= m^{\exp_{h,\nicefrac{a}{2}}(-\log{t_0})}(v_1).
\end{align*}

Finally write $c'={\exp_{h,\nicefrac{a}{2}}(-\log{t_0})}^2 \cdot \frac{a}{2}$. We have
\begin{align*}
\rho(v_0) &= \rho\left(m^{\exp_{h,\nicefrac{a}{2}}(-\log{t_0})}(v_1) \right) \\
&= {\exp_{h,\nicefrac{a}{2}}(-\log{t_0})}^2 \cdot \rho(v_1) \\
&= c'
\end{align*}
so 
\[
{\left(\rho\circ\widetilde{\Phi}^{-1}\right)}^{-1}(c) \subset
\rho^{-1}(c'). \qedhere
\]
\end{proof}

\section{Smoothly locally trivial stratified spaces}\label{stratified_section}
\begin{definition}
Let $C$ be a Hausdorff, second countable and paracompact space. A \textbf{stratification} of $C$ is a locally finite decomposition $\S$ of $C$ into a disjoint union of \textbf{strata} --- locally closed subsets each carrying a structure of a smooth manifold. In addition, this decomposition needs to satisfy
the \textit{condition of the frontier}, which asserts that the closure of a stratum is a union of strata.
\end{definition}
\begin{rem}
An equivalent definition of the condition of the frontier is the following: for $X,Y\in\S$, if $X\cap \overline{Y} \neq \emptyset$ then $X\subset \overline{Y}$.
\end{rem}
\begin{definition}\label{stratified_subspace}
A subset $\S'\subset \S$ such that
\[
C' = \bigsqcup_{X\in\S'}X
\]
is closed in $C$ is a \textbf{stratified subspace} of $(C,\S)$.
\end{definition}
\begin{definition}
Let $X,Y\in\S$ and define the relation $X<Y$ when $X\subset \overline{Y}$. One can check that this is an order relation.
\end{definition}
Let $M$ be a manifold of dimension $m$.
\begin{definition}
Let $C\subset M$ be a subset with a stratification $\S$ of $C$, such that the strata with their manifold structures are embedded submanifolds of $M$. Then $(C, \S)$ is a \textbf{stratified subset} of $M$.
\end{definition}
From now on we will mostly work with stratified subsets of manifolds.
In order to get nice properties of these stratification, such as $X<Y$ implies $\dim{X}<\dim{Y}$, one commonly adds additional regularity assumptions to the definition such as Whitney conditions (A) and (B) (see \cite{mather2012notes}). For our purposes we use the following stronger condition:
\begin{definition}\label{locally_trivial} Let $M$ be a manifold of dimension $n$. 
A stratified subset $C\subset M$ is \textbf{smoothly locally trivial with conical fibers} if for each $X\in\S$ of dimension $k$ and $p\in X$ there exist an open neighbourhood $U_p$ of $p$ and a diffeomorphism
\[
\theta_p:U_p\to \Omega_p \subset \R^k\times \R^{n-k}
\]
such that
\begin{enumerate}
    \item $\theta_p(X\cap U_p) = \left(\R^k\times \{0\}\right)\cap \Omega_p$.
    \item For each $Y\in\S$ with $X<Y$ there exists a subset $C_{Y,p}\subset \R^{n-k}\setminus\{0\}$ which is \textbf{conical}, i.e. invariant under multiplication by $t\in (0,1)$, such that
    \[
    \theta_p(Y\cap U_p) = \left(\R^k \times C_{Y,p}\right) \cap \Omega_p.
    \]
\end{enumerate}
\end{definition}
\begin{definition}\label{conical_neighbourhood}
A \textbf{conical neighbourhood} of $p$ is the data $(U_p, \Omega_p, \theta_p)$.
\end{definition}
\begin{rem}\label{also_whitney_b}
The local condition of Definition \ref{locally_trivial} implies Whitney (B) regularity. All Whitney (B) regular stratified subsets are \textit{topologically locally trivial} \cite[Corollary~10.6]{mather2012notes} but not necessarily smoothly locally trivial.
\end{rem}
\begin{example}[{\cite[Example~13.2]{Whitney1992}}]
Let $C\subset \R^2 \times (-1,1)$ be the zero set of
\[
f(x,y,t) = xy(y-x)(y - (3+t)x)
\]
stratified by splitting to the $t$ axis and connected components of the rest.

Then this stratification is Whitney (B) but not smoothly locally trivial.
\end{example}
\begin{example}
Let $M=\R^2$ and $N$ the zero set of $f(x,y)=y(y-x^2)$, which can be stratified by splitting to the origin and the rest. Then this stratification is Whitney (B) and smoothly locally trivial but not smoothly locally trivial with conical fibers.
\end{example}
\begin{definition}
Let $G$ be a Lie group acting on $M$. A stratification $\S$ of a closed subset $C\subset M$ is a \textbf{$G$-stratification} if each $X\in\S$ is $G$-invariant. 
\end{definition}
\begin{example}\label{orbit_types_example}
Let $G$ be a compact Lie group and let $M$ be a $G$-manifold. In Section \ref{orbit_type_section} we will describe two different definitions of stratifications of $M$ by orbit types and show they are $G$-stratifications that are smoothly locally trivial with conical fibers.
\end{example}
\begin{example}\label{momentum_zero_level_set}
Let $G$ be a compact Lie group and let $M$ be a Hamiltonian $G$-space with a $G$-equivariant momentum map
\[
\mu:M\to \g^*.
\]

Sjamaar and Lerman \cite[Theorem~2.1]{sjamaar1991stratified} proved that the intersection of $\mu^{-1}(0)$ with a stratum of one of the stratifications by orbit types of Example \ref{orbit_types_example} is a manifold, and that this induces a Whitney (B) regular $G$-stratification of $\mu^{-1}(0)$.
\end{example}
\begin{example}\label{norm_square_crit_set}
With the assumptions of Example \ref{momentum_zero_level_set}, fix an $Ad$-invariant inner product on the Lie algebra $\g$ of $G$ and fix the induced inner product on $\g^*$. 

In Subsection \ref{crit_set_subsection} we will show that the critical set $\text{Crit}\norm{\mu}^2$ can be decomposed into a $G$-stratification similarly to Example \ref{momentum_zero_level_set}. We will also show that this stratification is smoothly locally trivial with conical fibers and that $\mu^{-1}(0)$ is a union of connected components of it, and deduce that the $G$-stratification of $\mu^{-1}(0)$ is also smoothly locally trivial with conical fibers.
\end{example}

\subsection{Control data}\label{control_data_subsection}
\hfill

Let $(C,\S)$ be a stratified subset of a manifold $M$. Denote
\begin{align*}
\S_{<d} &= \{X\in \S: \dim{X}<d\} \\
C_{<d} &= \bigsqcup_{X\in\S_{<d}}X \\
M_d &= M - C_{<d}
\end{align*}
and note that a stratum of dimension $d$ is closed in $M_d$.

Let ${\{\left(V_X, T_X, m_X^t, \rho_X \right)\}}_{X\in\S}$ be a collection of the following data --- for each $X\in\S$ with $\dim{X}=d$:
\begin{enumerate}
    \item $V_X:M_d\to TM_d$ is a convenient Euler like vector along $X\subset M_d$, inducing a tubular neighbourhood embedding $\Phi_X:\nu_X\to M_d$.
    \item $T_X = \Phi_X(\nu_X)$.
    \item $m_X^t:T_X\to T_X$ is the multiplication by $t$ in $T_X$, identifying $T_X$ with $\nu_X$ using $\Phi_X$. Equivalently $m_X^{\exp(t)}:T_X\to T_X$ is the flow of $X$ restricted to $T_X$
    \item $\rho_X:T_X\to \R_{\geq 0}$ is the pullback by $\Phi^{-1}$ of a distance function on $\nu_X$.
\end{enumerate}
We call the data $\left(V_X, T_X, m_X^t, \rho_X \right)$ a tubular neighbourhood of $X$ and sometimes refer to it as just $T_X$. We call such a collection a \textbf{collection of tubular neighbourhood for $\S$}.

In this section we will give definitions of desired properties of such collection. The following is the topological part:
\begin{definition}\label{adjusted}
The collection ${\{\left(V_X, T_X, m_X^t, \rho_X \right)\}}_{X\in\S}$ is \textbf{adjusted with regard to $\S$} if the following properties hold:
\begin{enumerate}[label=(\subscript{AD}{{\arabic*}})]
    \item \label{ad1} $T_X\cap T_Y \neq \emptyset$ if and only if $X,Y$ are comparable.
    \item \label{ad2} $T_X\cap Y \neq \emptyset$ if and only if $X<Y$.
    \item \label{ad3} For $X,Y\in\S$ of dimension $d$, $\overline{T_X}\cap\overline{T_Y}\cap M_d = \emptyset$.
    \item \label{ad4} For $X\in\S$ with $\dim{X}=d$, the set $\rho_X^{-1}([0,a])$ is closed in $M_d$.
\end{enumerate}
\end{definition}
\begin{rem}
\ref{ad4} is already implied by the definition of $T_X$ and Proposition \ref{distance_close}. We include it in Definition \ref{adjusted} to give it a name and emphasize its importance for future constructions.
\end{rem}
From now on we assume that the collection is adjusted.
\begin{definition}\label{tangential}
Let $X\in\S$. We say $\left(V_X, T_X, m_X^t, \rho_X \right)$ is \textbf{tangent to higher strata} if for $Y>X$ and $y\in Y\cap T_X$ we have
\[
V_X(y)\in T_yY.
\]

The collection is \textbf{tangential with regard to $\S$} if each $\left(V_X, T_X, m_X^t, \rho_X \right)$ is tangent to higher strata.
\end{definition}
When the stratification is fixed we will often omit the phrase "with regard to $\S$" from Definitions \ref{adjusted} and \ref{tangential} and just say "adjusted" and "tangent to higher strata".
\begin{definition}\label{pre_commute}
Let $X,Y\in\S$.
We say that $T_X,T_Y$ \textbf{pre-commute} if either $X$ and $Y$ are not comparable or $X<Y$ and the following properties hold. For $t\geq 0$ and in the domain of definition of both sides:
\begin{enumerate}[label=(\subscript{PC}{{\arabic*}})]
    \item $m_X^0\circ m_Y^t = m_X^0$. Equivalently $Dm_X^0\cdot V_Y = 0$.
    \item $\rho_X\circ m_Y^t=\rho_X$. Equivalently $\L_{V_Y}\rho_X=0$.
\end{enumerate}

The collection is \textbf{pre-commutative} if for each $X,Y\in\S$, their tubular neighbourhood $T_X,T_Y$ pre-commute.
\end{definition}
\begin{rem}
An adjusted and pre-commutative collection has the properties of \textbf{control data} in the sense of Mather \cite{mather2012notes}. From now on, by control data we mean an adjusted and pre-commutative collection.
\end{rem}
\begin{definition}\label{commute}
Let $X,Y\in\S$ and assume $T_X,T_Y$ pre-commute and $V_X$ is tangent to $Y$.
We say that $T_X,T_Y$ \textbf{commute} if either $X$ and $Y$ are not comparable or $X<Y$ and the following properties hold. For $s>0,t\geq 0$ and in the domain of definition of both sides:
\begin{enumerate}[label=(\subscript{C}{{\arabic*}})]
    \item The maps $m_X^s,m_Y^t$ commute. Equivalently $[V_X,V_Y]=0$.
    \item $\rho_Y\circ m_X^s = \rho_Y$. Equivalently $\L_{V_X}\rho_Y=0$.
\end{enumerate}

The collection is \textbf{commutative} if for each $X,Y\in\S$, their tubular neighbourhood $T_X,T_Y$ commute.
\end{definition}
\begin{definition}\label{equivariant}
Let $G$ be a Lie group acting on $M$ and assume $\S$ is a $G$-stratification.
For $X\in\S$, we say $\left(V_X, T_X, m_X^t, \rho_X \right)$ is \textbf{$G$-equivariant} if $V_X$ is $G$-invariant (equivalently $m_X^t$ is $G$-equivariant) and $\rho_X$ is $G$-invariant.
\end{definition}

Given a smoothly locally trivial with conical fibers stratified subset $C\subset M$, in Section \ref{tangential_control_data_section} we will construct an adjusted, tangential and pre-commutative collection of tubular neighbourhoods for it --- tangential control data. In Section \ref{commutative_control_data_section} we will upgrade this control data to also be commutative.

\section{Tangential control data}\label{tangential_control_data_section}
Let $M$ be a manifold of dimension $n$ and let $(C,\S)$ be a stratified subset of it that is smoothly locally trivial with conical fibers. The following is the main theorem of this section:
\begin{thm}\label{tangential_control_data_thm}
There exists a collection of tubular neighbourhoods for $\S$ that is adjusted, tangential and pre-commutative.

If $G$ is a compact Lie group acting on $M$ and $\S$ is a $G$-stratification then the collection can be constructed to be $G$-equivariant.
\end{thm}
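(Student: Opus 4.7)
The plan is to induct on the set of strata dimensions appearing in $\S$, enumerated $d_1 < d_2 < \cdots$. At step $i$, I construct $\left(V_Y, T_Y, m_Y^t, \rho_Y\right)$ for every stratum $Y$ with $\dim Y = d_i$, maintaining as the inductive hypothesis that the collection already built for dimensions $< d_i$ is adjusted, tangential, and pre-commutative, and that each $V_X$ is tangent to every stratum higher than $X$. Since a stratum $Y$ of dimension $d_i$ is closed in $M_{d_i} = M \setminus C_{<d_i}$, Theorem \ref{euler_like_tubular} and Lemma \ref{convenient_euler} apply once I produce an appropriate Euler-like vector field along $Y \subset M_{d_i}$.

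The core of the argument is to build, for each such $Y$, a convenient Euler-like vector field $V_Y$ along $Y \subset M_{d_i}$ that (i) is tangent to every stratum $Y' > Y$ and (ii) pre-commutes with every previously built tube $T_Z$, $Z < Y$, i.e., satisfies $Dm_Z^0 \cdot V_Y = 0$ and $\L_{V_Y}\rho_Z = 0$ on $T_Z \cap M_{d_i}$. I would do this locally and patch: for each $p \in Y$ build a local Euler-like field $V_Y^{(p)}$ on an open neighbourhood of $p$ in $M_{d_i}$ satisfying (i) and (ii), then glue via a partition of unity $\{\varphi_p\}$. Proposition \ref{euler_like_linearity} preserves Euler-likeness under convex combinations, and since tangency to $Y'$, lying in $\ker(Dm_Z^0)$, and annihilating $\rho_Z$ are all linear conditions, the patched field inherits (i) and (ii) automatically. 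Lemma \ref{convenient_euler} then upgrades it to a convenient Euler-like field (multiplication by a bump function $\equiv 1$ near $Y$ preserves the linear conditions), and a $G$-averaged fibrewise inner product on $\nu_Y$ supplies the distance function $\rho_Y$.

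The local construction of $V_Y^{(p)}$ splits into two cases. If $p$ lies in no $\overline{T_Z}$ with $Z < Y$ of smaller dimension, a conical neighbourhood $(U_p, \Omega_p, \theta_p)$ of $p$ viewed as a point of $Y$ lets me pull back the standard Euler field $\sum_j x^j \partial_{x^j}$ in the normal coordinates; the conical condition on higher strata immediately gives tangency to each $Y' > Y$, and there is no pre-commute condition to check. If instead $p \in T_Z$ for some $Z < Y$, I work inside the bundle $T_Z$: by the inductive tangentiality, $V_Z$ is tangent to $Y$, so $Y \cap T_Z$ is invariant under the fibre scaling $m_Z^t$ and forms a cone subbundle of $T_Z$ over $Z$. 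In a bundle chart $T_Z \cong U \times F$ in which $m_Z^0$ is projection onto $U$ and $\rho_Z$ depends only on the $F$-coordinate, $Y \cap F$ is a smooth cone in $F$; after shrinking I may take $Y \cap T_Z$ locally to be the product $U \times (Y \cap F)$. I then build a vector field $W$ on $F$ that is Euler-like along $Y \cap F$ and tangent to the $\rho_Z$-spheres — for instance by producing such a field on a single sphere using the smoothly locally trivial structure of $Y \cap F$ intersected with that sphere, and extending radially — and set $V_Y^{(p)}(u,f) \coloneqq W(f)$. This vector field is Euler-like along $U \times (Y \cap F) = Y \cap T_Z$, has no $U$-component (so $Dm_Z^0 \cdot V_Y^{(p)} = 0$), and preserves $\rho_Z$; tangency to strata $Y' > Y$ comes from choosing the bundle chart compatibly with the conical structure at $p$.

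To finish, I would use Lemma \ref{shrinking_distance_level_sets} to shrink each $T_Y$ until the adjustedness conditions \ref{ad1}--\ref{ad4} of Definition \ref{adjusted} hold; local finiteness of $\S$ makes this possible one stratum at a time. In the equivariant case I take every partition of unity, inner product, and conical chart to be $G$-invariant (using compactness of $G$ and averaging), and apply Proposition \ref{euler_like_linearity}(2) to average the patched Euler-like field over $G$; all the relevant conditions are $G$-invariant by induction. The main obstacle I expect is the local construction when $p \in T_Z$: verifying that the fibrewise-constant field $V_Y^{(p)}(u,f) = W(f)$ is genuinely Euler-like along $Y$ in $M_{d_i}$ (not merely along $Y \cap F$ in $F$) while simultaneously satisfying both pre-commute conditions with $T_Z$ and tangency to all higher strata. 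This reduces ultimately to the product structure $Y \cap T_Z = U \times (Y \cap F)$ guaranteed by the smoothly locally trivial hypothesis applied at a point of $Z$, plus Remark \ref{euler_like_deriv} to check the Euler-like condition on functions vanishing on $Y$.
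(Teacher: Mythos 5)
Your overall plan --- ascending induction on dimension, local Euler-like fields patched by partitions of unity via Proposition~\ref{euler_like_linearity}, and $G$-averaging --- matches the paper's strategy, but the local step when $p \in T_Z$ has a genuine gap. You assert that ``after shrinking I may take $Y\cap T_Z$ locally to be the product $U\times(Y\cap F)$'' and attribute this to the smoothly locally trivial hypothesis applied at a point of $Z$. That does not follow: the bundle trivialization $T_Z\cong U\times F$ is generated by $V_Z$, which was itself built by patching and averaging at an earlier stage, so it bears no a priori relation to the conical chart centred at a point of $Z$; moreover that conical chart is a small neighbourhood in $M$ which need not even contain $p$. Tangentiality of $V_Z$ to $Y$ makes $Y\cap T_Z$ a cone sub\emph{bundle}, not a product. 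The paper's Lemma~\ref{local_submersion} resolves this from the opposite side: it starts from a conical chart at $p$ in the stratum being built, records that $(m_Z^0,\rho_Z)$ restricted to that stratum is a submersion near $p$ (tracked inductively via Lemmas~\ref{submersion_near} and~\ref{distance_submersion_too}), and applies the inverse function theorem to change only the $x$-coordinates so that $(m_Z^0,\rho_Z)$ becomes a coordinate projection while the conical shape of the higher strata in the $y$-coordinates survives unchanged; the standard Euler field $\sum y_i\partial_{y_i}$ then automatically kills $(m_Z^0,\rho_Z)$ and is tangent to the higher strata. Your sphere-plus-radial-extension picture would have to reprove something equivalent, and I do not see how it does.

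The second gap is in the patching. You claim the patched field inherits tangency and pre-commutativity ``automatically'' because the conditions are linear, but that only works if \emph{every} local field contributing at a point $v\in T_Z$ already satisfies them at $v$. If $p_j\notin\overline{T_Z}$ yet $\operatorname{supp}\varphi_{p_j}$ meets $T_Z$, or if $p_j\in\partial T_Z$ (where no bundle chart of $T_Z$ exists), then $V_Y^{(p_j)}$ need not pre-commute with $T_Z$ and the convex combination fails on the overlap. The paper controls supports via the auxiliary closed sets $A_p, B_p$, and then \emph{shrinks all lower-dimensional tubes} after the new tube is built so the offending region is excised from $T_Z$; Lemma~\ref{shrinking_distance_level_sets} is precisely what guarantees this shrinking preserves the pre-commute relations already established. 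You invoke that lemma only to obtain adjustedness, missing its essential role here. Finally, you do not address the case of $p$ lying in several nested tubes $T_{Z_1}\subsetneq T_{Z_2}$ simultaneously; the paper's Lemma~\ref{only_top_stratum} reduces pre-commutativity with all of them to pre-commutativity with the largest, and without an analogous observation your case analysis does not close.
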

We call such a collection \textbf{tangential control data}. We continue with the following strategy, similar to the one appearing in \cite{mather2012notes,pflaum2017equivariant}:
\begin{enumerate}
    \item Use the local finiteness of $\S$, para-compactness and normality of $M$ to deal with the topological requirements (this part was implicit in \cite{mather2012notes,pflaum2017equivariant}).
    \item Solve locally using the local form from Definition \ref{locally_trivial}.
    \item Patch the local data to a global one.
\end{enumerate}

\subsection{Topological considerations}
\begin{prop}
Let $X$ be a stratum of dimension $d$. Then
\[
A_X := \bigsqcup_{\substack{
\dim{Y} \geq d \\
X\not< Y
}}Y
\subset M_d
\]
is a closed subset of $M_d$.
\end{prop}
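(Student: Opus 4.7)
The plan is to verify closedness of $A_X$ in $M_d$ by a direct sequential argument, using the condition of the frontier together with local finiteness of $\S$.

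First I would check that the stratified subset $C$ is itself closed in $M$: by local finiteness of $\S$, the union $\bigcup_{X'\in\S}\overline{X'}$ is closed in $M$, and by the condition of the frontier each $\overline{X'}$ is a union of strata, hence contained in $C$. So $\overline{C}\subset C$, and in particular any point in $M$ that is a limit of points from $A_X\subset C$ already lies in $C$.

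Next, given a point $p$ in the closure of $A_X$ inside $M_d$, the previous step places $p$ in some stratum $Z$, and the membership $p\in M_d=M\setminus C_{<d}$ forces $\dim Z\geq d$. By local finiteness of $\S$, only finitely many strata meet a small neighbourhood of $p$, so any sequence in $A_X$ converging to $p$ admits a subsequence lying in a single stratum $Y$ with $\dim Y\geq d$ and $X\not< Y$. In particular $p\in \overline{Y}$, so $Z\cap \overline{Y}\neq\emptyset$, and the condition of the frontier upgrades this to $Z\subset\overline{Y}$, i.e.\ $Z< Y$.

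The argument then concludes by contradiction: if $X< Z$ held, the transitivity of $<$ (immediate from $X\subset \overline{Z}\subset \overline{Y}$) would yield $X< Y$, contradicting the choice of $Y$. Hence $X\not< Z$, so the whole stratum $Z$ lies in $A_X$, and in particular $p\in A_X$. The only mildly delicate step is the middle one --- combining local finiteness to reduce to a single approximating stratum $Y$ and then invoking the condition of the frontier to promote mere incidence into the order relation $<$; everything else is formal.
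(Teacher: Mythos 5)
Your argument takes a genuinely different route from the paper's. The paper proves that $M_d\setminus A_X$ is open by invoking the standing hypothesis of smooth local triviality with conical fibers: around a point $p\in (M_d\setminus A_X)\cap C$ it takes a conical neighbourhood $U_p$, observes that every stratum meeting $U_p$ lies above the stratum through $p$, and uses transitivity of $<$ to conclude $U_p\subset M_d\setminus A_X$. You instead argue sequential closedness of $A_X$ directly, using only the condition of the frontier and local finiteness. This is correct and in fact more elementary and more general: it does not use the conical structure at all and applies to any closed stratified subset of $M$.

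The one genuine flaw is in your opening step. The local finiteness in the paper's definition of a stratification is with respect to the subspace topology on $C$, not on $M$, and the condition of the frontier controls the closure of a stratum within $C$, not within $M$. Consequently neither fact yields that the closure in $M$ of each stratum is contained in $C$, and your derivation that $C$ is closed in $M$ does not go through. Moreover the proposition really does require $C$ to be closed in $M$: take $M=\R^2$ and let $C$ be the union of the origin, the open segment $\{(x,0): 0<x<1\}$ and the open segment $\{(0,y): 0<y<1\}$, with the obvious three strata. This is a smoothly locally trivial stratified subset with conical fibers satisfying the frontier condition, yet with $Y$ the horizontal segment, $A_Y$ is the vertical segment, which is not closed in $M_1=M\setminus\{(0,0)\}$. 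The correct reading is that the closedness of $C$ in $M$ is an implicit standing assumption (the paper's own proof also invokes it without justification); once that is granted, the rest of your argument is sound.
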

\begin{proof}
We show that the complement is open. Let $p\in M_d - A_X$.

If $p\notin C$ this follows from the fact that $C$ is closed and $A_X\subset C$. 

Otherwise $p\in Y$ for some $Y\in\S$ which satisfies $X<Y$. Let $U_p\subset M_d$ be a conical neighbourhood of $p$, we show that $U_p \subset M_d - A_X$.

Let $z\in U_p$. If $z\notin C$ then $z\notin A_X$. Otherwise assume $z\in Z$ for some $Z\in\S$. Then by definition of a conical neighbourhood $Y<Z$ and since "$<$" is transitive we get that $X<Z$, so $z\notin A_X$.
\end{proof}
\begin{cor}\label{safe_neighbourhood_md}
Let $X$ be a stratum of dimension $d$. There exists a neighbourhood $U_X\subset M_d$ of $X$ with the following property. For each $Y\in\S$ with $\dim{Y}\geq d$ we have $\overline{U_X}\cap Y \cap M_d\neq\emptyset$ if and only if $X<Y$.
\end{cor}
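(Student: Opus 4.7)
The plan is to apply normality of the manifold $M_d$ to the two disjoint closed subsets $X$ and $A_X$. By the preceding proposition, $A_X$ is closed in $M_d$; as noted earlier in the paper, the stratum $X$ is also closed in $M_d$ (strata of dimension $d$ are closed in $M_d$); and the two are disjoint, because the convention $X < X$ (coming from $X \subset \overline{X}$) means $X \notin A_X$, while distinct strata of $\S$ are disjoint by definition of a stratification. Since $M_d$ is a smooth manifold, hence a normal topological space, normality yields disjoint open neighbourhoods $U_X \supset X$ and $V \supset A_X$, and I would take this $U_X$; then $\overline{U_X} \subset M_d \setminus V$ is disjoint from $A_X$.

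It remains to verify the stated equivalence. For $Y \in \S$ with $\dim Y \geq d$, the condition $\dim Y \geq d$ gives $Y \subset M_d$ (since $C_{<d}$ contains only strata of dimension strictly less than $d$), so $Y \cap M_d = Y$. If $X \not< Y$ then $Y \subset A_X$, whence $\overline{U_X} \cap Y \subset \overline{U_X} \cap A_X = \emptyset$. Conversely, if $X < Y$ then $X \subset \overline{Y}$; any point $p \in X$ lies in $\overline{Y}$ and in the open set $U_X$, so $U_X$ must meet $Y$, and thus $\overline{U_X} \cap Y \supset U_X \cap Y \neq \emptyset$.

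There is essentially no technical obstacle here beyond careful bookkeeping. The point worth attention is the reflexive convention for the order relation "$<$", which is what guarantees $X \cap A_X = \emptyset$ and which incidentally handles the trivial case $Y = X$ of the equivalence. The argument is entirely point-set topological, relying on no more than normality of $M_d$ and the previous proposition.
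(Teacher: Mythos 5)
Your proof is correct and follows exactly the route the paper sketches: apply normality of $M_d$ to the disjoint closed sets $X$ (closed in $M_d$ since $\dim X = d$) and $A_X$ (closed by the preceding proposition), then check the two directions of the equivalence. Your observation about the reflexive convention for ``$<$'' (so that $X < X$, hence $X \cap A_X = \emptyset$) is the key bookkeeping point and is handled correctly; the only minor imprecision is that $\overline{U_X}$ should be read as closure in $M_d$ (or equivalently, closure in $M$ intersected with $M_d$, which is what the statement actually uses), but this does not affect the argument.
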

\begin{proof}
Use the previous proposition, the fact that $X$ is closed in $M_d$ and normality of $M_d$.
\end{proof}
\begin{prop}\label{locally_finite_neighbourhood}
There exists a locally finite collection of open sets $\{O_X\}_{X\in\S}$ such that 
\[
\forall X\in\S,\: X\subset O_X
\]
\end{prop}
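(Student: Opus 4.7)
The plan is to combine the local finiteness of $\S$ with the paracompactness of $M$ via a standard refine-and-reassemble argument. First I would invoke local finiteness of $\S$: every point $p\in M$ has an open neighbourhood $W_p$ meeting only finitely many strata. By paracompactness of $M$, the cover $\{W_p\}_{p\in M}$ admits a locally finite open refinement $\{W_\alpha\}_{\alpha\in A}$, and each $W_\alpha$ still meets only finitely many strata (being contained in some $W_p$).

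Next, for each $X\in\S$ I would set
\[
O_X = \bigcup\{W_\alpha : W_\alpha\cap X\neq\emptyset\}.
\]
This is open, and it contains $X$ because the $W_\alpha$ cover $M$, so any point of $X$ lies in some $W_\alpha$ that witnesses $W_\alpha\cap X\neq\emptyset$.

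It remains to check that $\{O_X\}_{X\in\S}$ is locally finite. Fix $p\in M$ and, using local finiteness of $\{W_\alpha\}$, choose an open neighbourhood $V\ni p$ that meets only finitely many $W_\alpha$, say $W_{\alpha_1},\dots,W_{\alpha_k}$. If $V\cap O_X\neq\emptyset$ then $V$ meets some $W_\alpha$ with $W_\alpha\cap X\neq\emptyset$, so $\alpha\in\{\alpha_1,\dots,\alpha_k\}$ and $X$ is one of the strata met by one of the $W_{\alpha_i}$. Each $W_{\alpha_i}$ meets only finitely many strata, so only finitely many $X$ can satisfy $V\cap O_X\neq\emptyset$.

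There is no real obstacle here; the only subtlety is organizational. One must define $O_X$ using \emph{only} the $W_\alpha$ that actually meet $X$ (rather than, e.g., taking a neighbourhood of $X$ obtained by some other method), because this is exactly what lets the double-counting---finitely many $W_\alpha$ near $p$, each touching finitely many strata---yield local finiteness of the resulting family.
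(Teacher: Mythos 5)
Your proof is correct and follows essentially the same argument as the paper: pass to a locally finite refinement of a cover by sets each meeting only finitely many strata, define $O_X$ as the union of those refined sets that meet $X$, and then chase the finiteness through the two layers. The paper's own proof is the same construction with only cosmetic differences in notation.
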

\begin{proof}
For each point $p\in M$ let $U_p$ be a neighbourhood of $p$ which meets only a finite number of strata. Using the fact that $M$ is paracompact and ${\{U_p\}}_{p\in M}$ is a cover of $M$ we get a locally finite open cover $\mathcal{U}$ of $M$ which is a refinement of ${\{U_p\}}_{p\in M}$ so that each $U\in\mathcal{U}$ intersects finitely many strata. For each $X\in\S$ define
\begin{align*}
\mathcal{U}_X &:= \{U\in\mathcal{U}: U\cap X\neq \emptyset\} \\
O_X &:= \bigcup_{U\in \mathcal{U}_X}U.
\end{align*}

We prove $\{O_X\}_{X\in\S}$ satisfies the required properties. $\mathcal{U}_X$ is a cover of $X$ so it is enough to show the collection is locally finite.

Let $p\in M$. By construction there exists a neighbourhood $p\in W_p$ such that $W_p$ intersects finitely many $U\in\mathcal{U}$, say ${\{U_i\}}_{i\leq r}\subset \mathcal{U}$. Each $U_i$ intersects finitely many strata and is thus contained in finitely many ${\{O_{X_{i,j}}\}}_{i\leq r, j\leq m_i}$.  It follows that $W_p$ intersects only the finite set ${\{O_{X_{i,j}}\}}_{i\leq r, j\leq m_i}$.
\end{proof}
\begin{lem}{\label{good_neighbourhood}}
There exists a collection of open sets $\{W_X\}_{X\in\S}$ satisfying the following properties:
\begin{enumerate}
    \item \label{wx_in_md} For $X\in\S$ of dimension $d$ we have $X\subset W_X \subset M_d$.
    \item The collection is locally finite.
    \item $W_X\cap W_Y \neq \emptyset$ if and only if $X,Y$ are comparable.
    \item \label{stronger_ad2} For $X\in\S$ of dimension $d$ we have $\overline{W_X}\cap Y \cap M_d \neq\emptyset$ if and only if $X<Y$.
    \item For strata $X\neq Y$, both of dimension $d$, we have $\overline{W_X}\cap\overline{W_Y}\cap M_d = \emptyset$.
\end{enumerate}
\end{lem}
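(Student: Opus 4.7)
The plan is to combine three ingredients: the locally finite collection $\{O_X\}_{X \in \S}$ from Proposition \ref{locally_finite_neighbourhood}, the safe neighbourhoods $U_X \subset M_d$ from Corollary \ref{safe_neighbourhood_md}, and further shrinkings afforded by normality. As a first pass I would set $V_X := O_X \cap U_X$: this is open in $M_d$, contains $X$, inherits local finiteness from $\{O_X\}$, and satisfies property (\ref{stronger_ad2}) via $\overline{V_X}^{M_d} \subset \overline{U_X}^{M_d}$ together with Corollary \ref{safe_neighbourhood_md} (the ``if'' direction coming from $V_X \supset X$ and the frontier-based definition of $<$). Thus properties (1), (2), and (\ref{stronger_ad2}) all follow immediately.

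The real difficulty lies in properties (3) and (5): two strata $X, Y$ that are incomparable (or distinct and of the same dimension) may still have $V_X, V_Y$ both intruding into a common higher stratum $Z$ with $X, Y < Z$, and overlapping there. The remedy rests on the condition of the frontier, which gives $X \cap \overline{Y}^M = \emptyset$ whenever $X, Y$ are incomparable. Using normality of $M$, I would refine each $V_Y$ so that $\overline{V_Y}^M$ is disjoint from every stratum incomparable to $Y$, and additionally so that for any two distinct strata $Y_1 \neq Y_2$ of the same dimension $d$ one has $\overline{V_{Y_1}}^{M_d} \cap \overline{V_{Y_2}}^{M_d} = \emptyset$ (possible because the family of strata of each fixed dimension is a locally finite family of disjoint closed subsets of $M_d$, separable by normality). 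These infinitely many separation conditions can be enforced simultaneously thanks to the local finiteness of $\{O_X\}$ combined with the classical shrinking lemma for paracompact normal spaces.

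With these refinements in place, define
\[
W_X := V_X \setminus \bigcup \bigl\{\overline{V_Y}^{M_d} : Y \neq X,\ Y \text{ incomparable to } X \bigr\}.
\]
The union on the right is locally finite (since $\{\overline{V_Y}^{M_d}\}$ is) and hence closed in $M_d$, so $W_X$ is open. By the closure refinement each $\overline{V_Y}^{M_d}$ in the union is disjoint from $X$, so $X \subset W_X$; property (3) follows because $W_X \cap V_Y = \emptyset$ for any incomparable $Y$ (and hence $W_X \cap W_Y = \emptyset$, while comparable strata share points of their common higher stratum); property (5) follows directly from the disjoint-closure arrangement built into the $V_Y$'s for same-dimensional pairs; and properties (1), (2), (\ref{stronger_ad2}) are preserved under the shrinking. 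The main obstacle is precisely the closure-refinement step --- enforcing all the normality-based separations simultaneously without disturbing the containment $X \subset W_X$ --- which is resolved by local finiteness of $\{O_X\}$ reducing the global coordination problem to a finite one at each point, and then by the paracompact shrinking lemma assembling the pieces into a coherent global choice.
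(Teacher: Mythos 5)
Your overall strategy (start from $V_X := O_X \cap U_X$, then shrink to fix properties (3) and (5)) is in the right family, and the use of the frontier condition and local finiteness is correct. But the pivotal step
\begin{quote}
``Using normality of $M$, I would refine each $V_Y$ so that $\overline{V_Y}^M$ is disjoint from every stratum incomparable to $Y$''
\end{quote}
does not go through as stated, and this is a genuine gap, not just an omitted detail. Normality separates \emph{disjoint closed} sets. In $M$, a stratum $Y$ is only locally closed, and the strata incomparable to $Y$ are also only locally closed; their closures in $M$ need not be disjoint from $\overline{Y}^M$. Indeed, if $W<Y$ and $W<Z$ with $Z$ incomparable to $Y$, then $W\subset \overline{Y}^M\cap\overline{Z}^M$, so no normality argument in $M$ can separate $\overline{Y}$ from $\overline{Z}$. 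The only way out is to perform the separations inside the various spaces $M_\ell$ (where the relevant strata \emph{are} closed), which you never invoke. The ``shrinking lemma for paracompact normal spaces'' also does not do what you want here --- it shrinks a locally finite open cover while retaining the covering property, but your family $\{V_Y\}$ is not a cover and the constraints you want enforced are separation constraints, not covering constraints.

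Even granting the fix of working stratum-dimension by stratum-dimension inside $M_\ell$, your construction is still more entangled than necessary: when you write $W_X = V_X \setminus \bigcup\{\overline{V_Y}^{M_d} : Y \text{ incomparable to } X\}$, the union ranges over incomparable $Y$ of \emph{all} dimensions, including $\dim Y > d$, and for those you must establish $\overline{V_Y}^{M_d}\cap X = \emptyset$, which is exactly the delicate closure-across-strata point your refinement is trying to patch. The paper sidesteps this entirely: for each $X$ of dimension $d$ it defines $B_X := \bigcup_{\dim Y \le d,\ Y\not< X}\overline{O_Y\cap U_Y}$, a closed set disjoint from $X$ (via Corollary \ref{safe_neighbourhood_md}), and then uses normality \emph{of $M_d$} once to find $W_X$ with $\overline{W_X}\cap B_X\cap M_d=\emptyset$. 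Properties (3) and (5) then follow from a WLOG argument (take the lower-dimensional stratum of the two), so higher-dimensional incomparable strata never need to be excised. I would recommend adopting that structure: it eliminates both the mis-applied normality and the need to coordinate infinitely many simultaneous separations.
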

\begin{rem}
Properties (\ref{wx_in_md}) and (\ref{stronger_ad2}) imply that $W_X\cap Y \neq \emptyset$ if and only if $X<Y$.
\end{rem}
\begin{proof}[Proof of Lemma \ref{good_neighbourhood}]
Let ${\{O_X\}}_{X\in\S}$ be a locally finite collection from Proposition \ref{locally_finite_neighbourhood} and let ${\{U_X\}}_{X\in\S}$ be a collection of neighbourhoods as in Corollary \ref{safe_neighbourhood_md}.

${\{O_X\cap U_X\}}_{X\in\S}$ is locally finite and therefore for every subset $J\subset \S$, the set
\[
\bigcup_{Y\in J}\overline{O_Y\cap U_Y}
\]
is closed in $M$. 

Let $X\in\S$ of dimension $d$. The closed set
\[
B_X:= \bigcup_{\substack{
\dim{Y}\leq d \\
Y\not< X
}}
\overline{O_Y\cap U_Y}
\]
does not intersect $X$ by the properties of ${\{U_Y\}}_{Y\in\S}$. Since $X$ and $B_X\cap M_d$ are closed in $M_d$ we can use normality to find a neighbourhood $W_X\subset M_d$ of $X$ such that 
\begin{align*}
    & W_X\subset O_X\cap U_X \cap M_d \\
    & \overline{W_X}\cap B_X\cap M_d = \emptyset .
\end{align*}

We claim ${\{W_X\}}_{X\in S}$ satisfies the required properties. Indeed:
\begin{enumerate}
    \item By construction.
    \item $W_X\subset O_X$ and ${\{O_X\}}_{X\in\S}$ is locally finite.
    \item Assume $W_X\cap W_Y\neq \emptyset$, without loss of generality $\dim{Y}\leq \dim{X}=d$. Since $W_X\subset M_d$ we have
\[
\overline{W_X}\cap W_Y \cap M_d \neq \emptyset
\]
so $W_Y\subset O_Y\cap U_Y$ cannot be a subset of $B_X$. By definition of $B_X$ we get $Y<X$.
    \item This follows from the same property of ${\{U_X\}}_{X\in\S}$.
    \item Let $X,Y\in\S$ of dimension $d$. They are not comparable and therefore $\overline{W_Y}\subset B_X$ so $\overline{W_X}\cap\overline{W_Y}\cap M_d = \emptyset$.
\end{enumerate}
\end{proof}

\subsection{Local tangential control data}
\begin{definition}
Let $X\in\S$ be a stratum of dimension $d$, let $p\in X$ and let $\left(U_p, \Omega_p, \theta_p\right)$ be a conical neighbourhood of $p$. Denote by $(x_1,\dots,x_d,y_1,\dots,y_{n-d})$ the coordinates in $\Omega_p$ such that $\theta_p(X\cap U_p)= \{y=0\}\cap \Omega_p$. 

The \textbf{Euler-like vector field associated to $\left(U_p, \Omega_p, \theta_p\right)$ in $\Omega_p$} is
\[
\sum_{i=1}^{n-d}y_i\frac{\partial}{\partial y_i}
\]
and the \textbf{Euler-like vector field associated to $\left(U_p, \Omega_p, \theta_p\right)$ in $U_p$} is its pullback by $\theta_p$.
\end{definition}
\begin{rem}
By the definition of a conical neighbourhood, the Euler-like vector field associated to $\left(U_p, \Omega_p, \theta_p\right)$ in $U_p$ is tangent to higher strata in $U_p$ and is Euler-like for $X\cap U_p$.
\end{rem}
\begin{lem}\label{local_submersion}
Let $X\in\S$ with $\dim{X}=d$, $p\in X$ and $U$ a neighbourhood of $p$.
Let $L^l$ be a manifold and $f:U\rightarrow L^l$ a smooth map such that $f\lvert_{X\cap U}$ is a submersion. 

Then there exists a vector field $V_p$ defined on a neighbourhood $W_p\subset U$ of $p$ such that:
\begin{enumerate}
    \item $V_p$ is Euler-like along $X\cap W_p$.
    \item $V_p$ is tangent to higher strata in $W_p$.
    \item $V_p$ is tangent to level sets of $f$.
\end{enumerate}
\end{lem}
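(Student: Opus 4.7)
The plan is to find coordinates near $p$ adapted simultaneously to $X$, to the higher strata, and to the level sets of $f$, and then take $V_p$ to be the radial field in the normal directions in these coordinates. Roughly, we start with coordinates coming from a conical neighbourhood, then twist only the base coordinates so that $f$ becomes a coordinate projection, without disturbing the normal directions or the conical structure.

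Concretely, first I would invoke smooth local triviality with conical fibers to obtain a conical neighbourhood $(U_p, \Omega_p, \theta_p)$ of $p$, shrunk so that $U_p \subset U$, giving coordinates $(x_1,\dots,x_d,y_1,\dots,y_{n-d})$ in which $X \cap U_p = \{y=0\}$ and every higher stratum $Y$ takes the form $\{y \in C_{Y,p}\}$ for a conical $C_{Y,p}\subset \R^{n-d}\setminus\{0\}$. In these coordinates the field $\sum_j y_j \,\partial/\partial y_j$ is automatically Euler-like along $X$ and tangent to each higher stratum, because conicality of $C_{Y,p}$ puts the radial direction into $T_{y_0}C_{Y,p}$ at each $y_0 \in C_{Y,p}$. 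The only missing condition is tangency to the level sets of $f$.

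Next I would adjust the $x$-coordinates to trivialize $f|_X$. Since $f|_X$ is a submersion of rank $l$ at $p$, after a permutation of the $x$-coordinates (a linear change in the first factor, which preserves the conical description) and a choice of chart on $L$ near $f(p)$, I can arrange that $f(x,0) = (x_1,\dots,x_l)$ on a smaller neighbourhood of $p$. I then define new coordinates by
\[
\tilde{x}_i := f_i(x,y)\ (i \leq l), \qquad \tilde{x}_i := x_i\ (i > l), \qquad y\text{ unchanged}.
\]
Since $f(x,0)=(x_1,\dots,x_l)$, this map restricts to the identity on $X$, so its Jacobian is the identity along $X$ and remains invertible on a smaller open neighbourhood $W_p$ of $p$ by continuity, giving a valid coordinate system $(\tilde{x},y)$ on $W_p$. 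Because the $y$-coordinates are untouched, the equalities $X \cap W_p = \{y=0\}$ and $Y \cap W_p = \{y \in C_{Y,p}\}$ persist in the new coordinates, and the level sets of $f$ on $W_p$ become the affine slices $\{\tilde{x}_i = c_i : i \leq l\}$.

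Finally I would define $V_p$ to be the vector field that, in the coordinates $(\tilde{x},y)$, equals $\sum_j y_j\,\partial/\partial y_j$. Property (1) holds because in any product chart in which $X = \{y=0\}$, this radial field in the normal directions represents the Euler vector field of the normal bundle. Property (2) holds because at any $(\tilde{x}_0,y_0)$ with $y_0 \in C_{Y,p}$, the vector $\sum_j y_{0,j}\,\partial/\partial y_j$ equals the radial vector $y_0$, which is tangent to $C_{Y,p}$ by conicality. Property (3) is immediate, since $V_p$ has no $\tilde{x}$-component while the level sets of $f$ are cut out by the $\tilde{x}_i$'s. The main technical point is to verify that the change of variables $(x,y) \mapsto (\tilde{x}, y)$ is a genuine local diffeomorphism near $p$ while preserving the conical structure; both reduce to the observation that it is the identity on $X$ and leaves $y$ unchanged.
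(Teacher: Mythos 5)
Your construction --- taking a conical neighbourhood and replacing the first $l$ of the $x$-coordinates by the components of $f$, while leaving the $y$-coordinates untouched --- is exactly the change of variables $\Psi$ the paper uses, and the radial field you then write down in the new coordinates is the same Euler-like vector field. The one genuine gap is in your justification that this change of variables is a local diffeomorphism. You claim that a permutation of the $x$-coordinates together with a choice of chart on $L$ near $f(p)$ lets you arrange $f(x,0)=(x_1,\dots,x_l)$, and then conclude that the Jacobian of $(x,y)\mapsto(\tilde{x},y)$ equals the identity along $X$. But a permutation of the source plus a chart on the target cannot straighten a submersion: with $d=2$, $l=1$, $f\lvert_X(x_1,x_2)=x_1+x_2^2$, no reparametrization of the target $\R$ composed with a permutation of $(x_1,x_2)$ turns $f\lvert_X$ into the coordinate projection $x_1$, since the target chart can only act on the value $x_1+x_2^2$. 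Straightening $f\lvert_X$ in general requires a nonlinear change of the source coordinates on $X$ (which, incidentally, would still preserve the conical description, as it acts only in the $x$-factor --- but you explicitly restrict yourself to a linear permutation).

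The repair is cheap and is precisely what the paper does. After permuting the $x$-coordinates so that the block $(\partial f_i/\partial x_j)_{i,j\leq l}$ is invertible at $p$ (possible because $f\lvert_X$ is a submersion at $p$), the differential of $(x,y)\mapsto(f_1,\dots,f_l,x_{l+1},\dots,x_d,y_1,\dots,y_{n-d})$ at $p$ is block lower-triangular with that invertible block and an identity block on the diagonal, hence invertible; the inverse function theorem then supplies $W_p$. You do not need the Jacobian to be the identity along all of $X$ --- invertibility at $p$ alone is enough to get a neighbourhood. Everything else in your argument, namely that the conical picture $X=\{y=0\}$ and $Y=\R^d\times C_{Y,p}$ survives because the $y$-coordinates are untouched, and that the radial field $\sum_j y_j\,\partial/\partial y_j$ in the new chart is Euler-like along $X$, tangent to the higher strata, and tangent to the level sets of $f$, is correct and coincides with the paper's proof.
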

\begin{proof}
The claim is local so we can take a conical neighbourhood $U_p$ of $p\in X$ and identify $U_p\subset U$ with $\Omega_p \subset \R^d\times \R^{n-d}$ with coordinates $(x,y)$. Here $X\cap U_p=\{y=0\}\cap U_p$ and each $Y\in\S$ with $X<Y$ is of the form $Y\cap U_p=\left(\R^k \times C_{Y,p}\right) \cap U_p $ for some conical subset $C_{Y,p}\subset \R^{n-d} \setminus \{0 \}$. We can further assume $f$ takes values in $\R^l$ by taking a smaller neighbourhood.

Since $f\lvert_X$ is a submersion at $p$, the differential
\[
Df = \begin{pmatrix}
Df_1\lvert_{TX} \\
\vdots \\
Df_l\lvert_{TX}
\end{pmatrix}
\]
has rank $l$ at $p$ and by permuting the $x$ coordinates we can arrange the top $l\times l$ block
\[
{\Bigl\{\frac{\partial f_i}{\partial x_j}\Bigr\}}_{i,j\leq l} = \begin{pmatrix}
\frac{\partial f_1}{\partial x_1} & \dots & \frac{\partial f_1}{\partial x_l} \\
\vdots & \ddots & \vdots \\
\frac{\partial f_l}{\partial x_1} & \dots & \frac{\partial f_l}{\partial x_l}
\end{pmatrix}
\]
to be invertible at $p$. Now we define $\Psi:U\rightarrow \R^d\times\R^{n-d}$ by
\[
\Psi(x_1,\dots,x_d,y_{1},\dots,y_{n-d}) = 
\begin{pmatrix}
f_1(x_1,\dots,x_d,y_{1},\dots,y_{n-d}) \\
\vdots \\
f_l(x_1,\dots,x_d,y_{1},\dots,y_{n-d}) \\
x_{l+1} \\
\vdots \\
x_d \\
y_1 \\
\vdots \\
y_{n-d}
\end{pmatrix}
\]
with differential
\[
\begin{pmatrix}
\frac{\partial f_1}{\partial x_1} & \dots & \frac{\partial f_1}{\partial x_l} & \frac{\partial f_1}{\partial x_{l+1}} & \dots & \frac{\partial f_1}{\partial y_{n-d}}\\
\vdots & \ddots & \vdots & \vdots & \ddots & \vdots \\
\frac{\partial f_l}{\partial x_1} & \dots & \frac{\partial f_l}{\partial x_l} & \frac{\partial f_l}{\partial x_{l+1}} & \dots & \frac{\partial f_l}{\partial y_{n-d}} \\
& & & 1 & & \\
& & & & \ddots & \\
& & & & & 1
\end{pmatrix}
\]
which by construction is invertible at $p$. Using the inverse function theorem we can find a neighbourhood $W_p \subset U$ of $p$ such that $\Psi$ is a diffeomorphism on $W_p$. 

We claim that $\left( W_p, \Psi(W_p), \Psi \right)$ is a conical neighbourhood of $p$. 
Indeed, $\Psi$ preserves the last $n-l$ coordinates and in particular the $y$ coordinates. We get that
\[
\Psi(X\cap W_p) = \left(\R^d \times \{0\}\right) \cap \Psi(W_p).
\]
For $Y\in\S$ with $X<Y$, by assumption 
\[
Y\cap W_p = (\R^d\times C_{Y,p})\cap W_p
\]
for a conical $C_{Y,p}$. Using the fact that $\Psi$ preserves the $y$ coordinates we obtain
\[
\Psi(Y\cap W_p) = (\R^d\times C_{Y,p})\cap \Psi(W_p).
\]

Notice that $f\circ \Psi^{-1}:\Psi(W_p)\rightarrow \R^l$ is the projection to the first $l$ coordinates in $\Psi(W_p)$. 
This implies that the Euler-like vector field $V_p'$ associated to $\left( W_p, \Psi(W_p), \Psi \right)$ in $\Psi(W_p)$ satisfies 
\[
D(f\circ \Psi^{-1})\cdot V_p'=0
\]
and therefore the Euler-like vector $V_p$ associated to $\left( W_p, \Psi(W_p), \Psi \right)$ in $W_p$ satisfies
\[
Df\cdot V_p = 0.
\]

Finally, the Euler-like vector $V_p$ is tangent to higher strata in $W_p$ because $V_p'$ is tangent to higher strata in $\Psi(W_p)$.
\end{proof}
\begin{lem}\label{only_top_stratum}
Let $X$ be a stratum, $p\in X$ and $U$ a neighbourhood of $p$. Let $Y,Z\in\S$ with $Z<Y<X$ and let $T_Y, T_Z$ be tubular neighbourhoods of $Y,Z$ such that $U\subset T_Y\cap T_Z$ and $T_Y, T_Z$ pre-commute.

Let $V_X$ be a vector field defined on $U$ that is Euler-like along $X\cap U$ such that
\[
D\left(m^0_Y, \rho_Y \right)\cdot V_X = 0.
\]

Then
\[
D\left(m^0_Z, \rho_Z \right)\cdot V_X = 0.
\]
\end{lem}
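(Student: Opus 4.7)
The plan is to prove this by a direct chain-rule argument exploiting the $t=0$ case of the pre-commutativity identities for $T_Y$ and $T_Z$.

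First, I would observe that pre-commutativity of $T_Y$ and $T_Z$ with $Z<Y$ gives the identities $m_Z^0 \circ m_Y^t = m_Z^0$ and $\rho_Z \circ m_Y^t = \rho_Z$ for all $t\ge 0$ on the common domain of definition. Setting $t=0$ produces the key factorizations
\[
m_Z^0 = m_Z^0 \circ m_Y^0, \qquad \rho_Z = \rho_Z \circ m_Y^0,
\]
valid at points $p$ where $m_Y^0(p)\in T_Z$. Since the conclusion of the lemma is local, I would shrink $U$ if needed (using openness of $T_Z$ and continuity of $m_Y^0$) to arrange $m_Y^0(U)\subset T_Z$, so these factorizations hold throughout $U$; in other words, $m_Z^0$ and $\rho_Z$ literally factor through the tubular projection $m_Y^0$ on $U$.

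Then I would differentiate the factorizations at an arbitrary $p \in U$. By the chain rule,
\[
Dm_Z^0|_p = Dm_Z^0|_{m_Y^0(p)} \circ Dm_Y^0|_p, \qquad D\rho_Z|_p = D\rho_Z|_{m_Y^0(p)} \circ Dm_Y^0|_p.
\]
Evaluating both equations on $V_X(p)$ and invoking the hypothesis $D(m_Y^0,\rho_Y)\cdot V_X = 0$, which contains $Dm_Y^0|_p \cdot V_X(p) = 0$, both right-hand sides vanish. This yields $D(m_Z^0, \rho_Z)|_p \cdot V_X(p) = 0$ at every $p \in U$, as required.

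The main (minor) obstacle is the domain issue --- ensuring $m_Y^0(U)\subset T_Z$ so that the factorizations are even defined on all of $U$. This is not automatic from the hypotheses of the lemma, but is handled by the local nature of the statement: shrinking $U$ suffices. Otherwise the proof is essentially a one-line chain-rule calculation, whose conceptual content is that the $t=0$ case of pre-commutativity forces $m_Z^0$ and $\rho_Z$ to factor through $m_Y^0$. Notably, the Euler-like property of $V_X$ and the $\rho_Y$ component of the hypothesis are not actually used in this argument; they appear in the statement because they are the natural hypotheses coming from the larger construction of tangential control data in which the lemma is invoked.
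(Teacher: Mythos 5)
Your proof is correct and follows essentially the same route as the paper: setting $t=0$ in the pre-commutativity identities to get the factorizations $m_Z^0 = m_Z^0\circ m_Y^0$ and $\rho_Z = \rho_Z\circ m_Y^0$, then applying the chain rule and $Dm_Y^0\cdot V_X = 0$. Your additional remarks --- the explicit handling of the domain condition and the observation that the Euler-like hypothesis and the $\rho_Y$ component are not actually needed --- are both accurate, though the paper's proof simply takes the identities as holding on their common domain of definition without comment.
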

\begin{proof}
By the assumption that $T_Y,T_Z$ pre-commute we get 
\begin{align*}
m^0_Z\circ m^0_Y = m^0_Z \\
\rho_Z\circ m^0_Y = \rho_Z.
\end{align*}
and therefore
\begin{align*}
D\left(m^0_Z, \rho_Z \right)\cdot V_X &= D\left(\left(m^0_Z, \rho_Z \right)\circ m^0_Y\right)\cdot V_X \\
&= D\left(m^0_Z, \rho_Z \right)\cdot \underbrace{Dm^0_Y\cdot V_X}_0. \\
&= 0.
\end{align*}
\end{proof}
\begin{lem}\label{submersion_near}
Let $X\in\S$ with $\dim{X}=d$ and let $V_X$ be an Euler-like vector field along $X$, defined on $M_d$, that induces a tubular neighbourhood $(U,\Phi)$. 

Then there exists a neighbourhood $W\subset \Phi(U)$ of $X$ such that for each $Y>X$, the map
\[
\left( m^0\circ \Phi^{-1} \right)\Big\lvert_{W\cap Y}
\]
is a submersion.
\end{lem}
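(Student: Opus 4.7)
The plan is to reduce the submersion property to a pointwise linear-algebra check inside a conical chart and then patch via an open cover. Fix $p\in X$ and choose a conical neighbourhood $(U_p,\Omega_p,\theta_p)$ with coordinates $(x_1,\dots,x_d,y_1,\dots,y_{n-d})$, so that $X\cap U_p=\{y=0\}$ and, for every $Y>X$, $Y\cap U_p=(\R^d\times C_{Y,p})\cap\Omega_p$. The crucial structural input is that this product form forces
\[
T_qY\supset\operatorname{span}\{\partial/\partial x_1,\dots,\partial/\partial x_d\}
\]
at every $q\in Y\cap U_p$, uniformly in the choice of higher stratum $Y$.

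Write $\pi_X:=m^0\circ\Phi^{-1}:\Phi(U)\to X$. Since $\pi_X$ restricts to the identity on $X$, after shrinking $U_p$ it can be identified in the chart with a smooth map $\pi_X^{(1)}:U_p\to\R^d$ (the $x$-coordinate identifying $X\cap U_p$ with an open subset of $\R^d$), and the identity condition on $X$ forces the first $d$ columns of the Jacobian of $\pi_X^{(1)}$ at $p$ to be the identity matrix $I_d$. By continuity of the partials, I then shrink once more to an open neighbourhood $W_p\subset U_p\cap\Phi(U)$ of $p$, with $\pi_X(W_p)\subset X\cap U_p$, on which the first $d$ columns of the Jacobian of $\pi_X^{(1)}$ remain linearly independent at every point.

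Combining the two ingredients: for any $Y>X$ and any $q\in Y\cap W_p$, the image $d(\pi_X)_q(T_qY)$ contains $d(\pi_X)_q(\operatorname{span}\{\partial/\partial x_i\})$, which is precisely the span of the first $d$ columns of the Jacobian at $q$, hence all of $T_{\pi_X(q)}X$. Thus $\pi_X|_{W_p\cap Y}$ is a submersion, and since the argument is uniform in $Y$, the open set $W:=\bigcup_{p\in X}W_p$ is a neighbourhood of $X$ in $\Phi(U)$ that works simultaneously for every $Y>X$. The only mild subtlety is arranging $\pi_X(W_p)\subset X\cap U_p$ so that the chart identification is legitimate, which is automatic after shrinking $W_p$; the substantive input is the cylindrical product form of $Y$ along $X$ supplied by smooth local triviality with conical fibers, while the Euler-like structure of $V_X$ enters only through the fact that $\pi_X$ is smooth and restricts to the identity on $X$.
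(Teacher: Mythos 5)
Your argument is correct, but it takes a different route from the paper. The paper invokes Remark \ref{also_whitney_b} (that smooth local triviality with conical fibers implies Whitney~(B)) and then cites Mather's Lemma~7.3 directly, which gives a neighbourhood $U_i$ for each of the finitely many $Y_i>X$ on which $m^0\circ\Phi^{-1}$ restricts to a submersion on $Y_i$; taking $W=\bigcap_i U_i$ finishes it. You instead give a self-contained argument: in a conical chart, every $Y>X$ has the product form $\R^d\times C_{Y,p}$, so $T_qY$ always contains the $d$ base directions $\partial/\partial x_1,\dots,\partial/\partial x_d$; since $m^0\circ\Phi^{-1}$ restricts to the identity on $X$, the corresponding $d\times d$ Jacobian block is $I_d$ on $X$ and stays invertible nearby, so those directions already surject onto $T_{\pi_X(q)}X$. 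Your approach uses the stronger smooth-local-triviality hypothesis (available in this section) and buys an elementary proof that handles all higher strata simultaneously from a single local argument, whereas the paper's citation of Mather only needs Whitney~(B) but handles each nearby stratum separately and then intersects finitely many neighbourhoods. One minor point to make fully precise: from $Y\cap U_p=(\R^d\times C_{Y,p})\cap\Omega_p$ being an embedded submanifold, one should note that $C_{Y,p}$ is itself (locally) a submanifold of $\R^{n-d}$, so $T_qY = \R^d\times T_{q_2}C_{Y,p}$, which is the precise justification for the containment $T_qY\supset\operatorname{span}\{\partial/\partial x_i\}$ that drives the argument.
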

\begin{proof}
Let ${\{Y_i\}}_{i\in I}$ be the set of strata $Y$ such that $X<Y$. By the assumption that $\S$ is locally finite and the condition of the frontier this set is finite.

For each $Y_i$, use Remark \ref{also_whitney_b} and \cite[Lemma~7.3]{mather2012notes} to obtain a neighbourhood $U_i\subset \Phi(U)$ such that
\[
\left( m^0\circ \Phi^{-1} \right)\Big\lvert_{U_i\cap Y}
\]
is a submersion. Now take
\[
W = \bigcap_{i\in I}U_i.
\]
\end{proof}
\begin{lem}\label{distance_submersion_too}
Let $X<Y$ and let $V_X$ be an Euler-like vector field along $X$ tangent to $Y$, inducing a tubular neighbourhood $(U, \Phi)$. Assume that there exists a neighbourhood $W$ of $X$ such that
\[
\left( m^0\circ \Phi^{-1} \right)\Big\lvert_{W\cap Y}
\]
is a submersion. Then for any distance function $\rho:\nu_X\to\R_{\geq 0}$ the map
\[
\left( m^0\circ \Phi^{-1}, \rho\circ \Phi^{-1}\right)\Big\lvert_{W\cap Y}
\]
is a submersion.
\end{lem}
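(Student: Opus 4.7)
The plan is to supply the missing $\R$-direction in the target using the Euler-like vector field $V_X$ itself. The essence is that, since $V_X \sim_\Phi \E$, the vector field $V_X$ enjoys three useful properties on $\Phi(U)$: it is tangent to $Y$ by hypothesis; it lies in the kernel of $D(m^0 \circ \Phi^{-1})$, because $\E$ is vertical for the bundle projection $m^0:\nu_X\to X$; and by Proposition \ref{distance_derivative} together with naturality it satisfies
\[
\L_{V_X}(\rho\circ \Phi^{-1}) = 2\,(\rho\circ\Phi^{-1}).
\]
The key numerical input is that $\rho\circ\Phi^{-1}$ is strictly positive on $Y$, because distinct strata $X,Y$ are disjoint and $\rho^{-1}(0)=X$.

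With this in hand, the argument is a one-step linear correction. Fixing $y\in W\cap Y$ and a target vector $(v,s) \in T_{m^0\Phi^{-1}(y)}X\times\R$, the submersion hypothesis supplies $w_0\in T_y Y$ with $D(m^0\circ\Phi^{-1})(w_0) = v$; the correction $w := w_0 + \lambda\, V_X(y)$ with $\lambda$ chosen so that the $\rho$-component becomes $s$ still lies in $T_y Y$ (by tangency of $V_X$ to $Y$), still maps to $v$ under $D(m^0\circ\Phi^{-1})$ (by verticality), and realises the prescribed $\rho$-value (by the Lie-derivative identity, which makes $\lambda$ well-defined precisely because $\rho\circ\Phi^{-1}$ is nonzero at $y$).

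The main conceptual point is not any obstacle so much as the correct bookkeeping: one must transfer the identities $\L_\E\rho=2\rho$ and $Dm^0\cdot\E=0$ from $\nu_X$ to $\Phi(U)$ via the diffeomorphism $\Phi$, and remember that $V_X$ being tangent to $Y$ is what keeps the correction vector inside $T_y Y$. Positivity of $\rho\circ\Phi^{-1}$ throughout $W\cap Y$ ensures the correction is well-defined at every point, so no further shrinking of $W$ is required.
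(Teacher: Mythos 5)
Your proof is correct and uses exactly the same idea as the paper's one-line proof: $V_X$ is a direction tangent to $Y$, lying in $\ker D(m^0\circ\Phi^{-1})$, and not in $\ker D(\rho\circ\Phi^{-1})$ since $\L_{V_X}(\rho\circ\Phi^{-1}) = 2(\rho\circ\Phi^{-1}) > 0$ on $Y$. You have merely spelled out the linear-algebra correction step that the paper leaves implicit.
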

\begin{proof} Follows from the assumptions and the fact that $V_X$ is a direction in $Y$ that is in $\ker D\left(m^0\circ \Phi^{-1}\right)$ and not in $\ker D \left(\rho\circ \Phi^{-1}\right)$.
\end{proof}
\subsection{Proof of Theorem \ref{tangential_control_data_thm}}
\hfill

Let ${\{W_X\}}_{X\in\S}$ be the collection of open sets from Lemma \ref{good_neighbourhood}. Let $G$ be a compact Lie group acting on $M$ and assume that $\S$ is a $G$-stratification.

We use ascending induction on the dimension of strata $d$.

\underline{Assumption for construction step $d$}:
There exists a collection ${\{\left(V_X, T_X, m_X^t, \rho_X \right)\}}_{\dim{X}<d}$ of convenient tubular neighbourhoods of strata of dimension $<d$ which satisfies the following properties:
\begin{enumerate}
    \item\label{in_proof_equivariant} The collection is $G$-equivariant.
    \item\label{in_proof_adjusted} $T_X\subset W_X$.
    \item\label{in_proof_tangential} Each $V_X$ in the collection is tangent to higher strata (not only of dimension $<d$).
    \item\label{in_proof_submersion} For $X\in \S$ with $\dim{X}<d$ and $Y>X$, the restricted map $(m^0_X, \rho_X)\lvert_{T_X\cap Y}$ is a submersion.
    \item\label{in_proof_pre_commute} each pair $T_X,T_Y$ in the collection pre-commute.
\end{enumerate}
Note that Property (\ref{in_proof_adjusted}) and the fact we work with convenient tubular neighbourhoods imply that ${\{\left(V_X, T_X, m_X^t, \rho_X \right)\}}_{\dim{X}<d}$ is adjusted with regard to $\S_{<d}$.

\underline{Construction step $0$}: Let $X\in\S$ with $\dim{X}=0$ i.e. $X={\{p_j\}}_{j\in J}$ a discrete set. For each $j\in J$ let $(U_{p_j}, \Omega_{p_j}, \theta_{p_j})$ be a conical neighbourhood of $p_j$ such that $U_{p_j}\subset W_X$. Define the vector field $V_{X,p_j}$ to be the Euler-like vector field associated to $(U_{p_j}, \Omega_{p_j}, \theta_{p_j})$ in $U_{p_j}$.

Patch ${\{V_{X,p_j}\}}_{j\in J}$ to a vector field $V_X'$ on $M$ using a partition of unity. $V_X'$ is Euler-like along $X$ and tangent to higher strata. Since each $Y\in\S$ is $G$-invariant, the vector field $V_X''$ obtained by averaging $V_X'$ over $G$ also has these properties.

Now use Lemma \ref{submersion_near} with the Euler-like vector field $V_X''$ to obtain a neighbourhood $W_X'$ of $X$ such that 
\[
\left( m^0\circ \Phi^{-1} \right)\Big\lvert_{Y\cap W_X'}
\]
is a submersion for each $Y>X$. By the compactness of $G$ we can find a smaller neighbourhood $W_X''\subset W_X'$ of $X$ that is also $G$-invariant. 

Finally, use Lemma \ref{convenient_euler} to obtain a $G$-invariant bump function $\varphi:M\to [0,1]$ supported in $W_X''$ such that
\[
V_X\coloneqq \varphi\cdot V_X''
\]
is a $G$-invariant convenient Euler-like vector field along $X$, inducing a $G$-equivariant tubular neighbourhood embedding $\Phi_X$. Denote
\begin{align*}
T_X &= \Phi_X(\nu_X) \\
m_X^t &= m^t\circ \Phi_X^{-1}
\end{align*}
and take $\rho_X$ to be the pullback by $\Phi^{-1}$ of any $G$-invariant distance function on $\nu_X$.

Repeating the above construction for each $X\in\S$ with $\dim{X}=0$ we obtain the collection ${\{\left(V_X, T_X, m_X^t, \rho_X \right)\}}_{\dim{X}=0}$ which satisfies the construction assumption for step $d=1$:
\begin{enumerate}
    \item By construction.
    \item By construction.
    \item By construction.
    \item $m_X^0\lvert_Y$ is a submersion for each $Y>X$ by construction. Therefore this follows from Property (\ref{in_proof_tangential}) and Lemma \ref{distance_submersion_too}.
    \item Strata of the same dimension are not comparable.
\end{enumerate}

\underline{Construction step $d$}:
We build $T_X$ separately for each stratum $X\in\S$ with $\dim{X}=d$ by building $T_{X,p}$ locally, patching by a partition of unity and shrinking using Lemma \ref{convenient_euler}. We will also shrink all  neighbourhoods in the existing collection.

Let $X\in\S$ with $\dim{X}=d$ and let $p\in X$.
Then $p\in \overline{T_{Y_i}}$ for finitely many smaller strata ${\{Y_i\}}_{i\in I\subset \N}$ with $\dim{Y_i}=i$ and $i<d$. That each dimension has at most a single stratum follows from the definition of $\{{W_Y\}}_{Y\in\S}$ and Property (\ref{in_proof_adjusted}). Denote 
\[
I_p = \{i_1,\dots,i_s\}\sqcup \{k_1,\dots,k_r\}
\]
where the $i_j$ indexes are those such that $p\in T_{Y_{i_j}}$, the $k_j$ indexes are those such that $p\in \partial{T_{Y_{k_j}}}$ and both sets are ordered by the dimension of the corresponding stratum. By Property (\ref{in_proof_adjusted}):
\[
Y_{i_1} < \dots < Y_{i_s}.
\]

Define 
\begin{align}
\label{in_proof_Ap} A_p &= \bigcup_{1\leq j \leq r}\{\rho_{Y_{k_j}}\leq 1\} \\
\label{in_proof_Bp} B_p &= \bigcup_{\substack{
Y\in S_{<d} \\
p\notin \overline{T_Y}}}\overline{T_Y}.
\end{align}
Both sets don't contain $p$ and are closed in $M_d$. For $A_p$ this follows from Proposition \ref{distance_close} and for $B_p$ from Property (\ref{in_proof_adjusted}).
Denote 
\[
W_p := W_X \cap (M_d\setminus A_p) \cap (M_d\setminus B_p) \cap \bigcap_{1\leq j\leq s}T_{Y_{i_j}}
\]
this is a neighbourhood of $p$ on which $(m^0_{Y_{i_j}}, \rho_{Y_{i_j}})$ is defined or all $1\leq j \leq s$.

By Property (\ref{in_proof_equivariant}), for each $g\in G$ the sets $I_{g\cdot p},A_{g\cdot p}, B_{g\cdot p}$ and $W_{g\cdot p}$ are the same as those of $p$. Therefore we can denote them by $I_{G\cdot p},A_{G\cdot p}, B_{G\cdot p}$ and $W_{G\cdot p}$ respectively.

For each $g\in G$, by Property (\ref{in_proof_submersion}) and Lemma \ref{local_submersion} there exist a vector field $V_{g\cdot p}$, defined on a smaller neighbourhood $U_{g\cdot p}\subset W_{G\cdot p}$ of $g\cdot p$, such that:
\begin{itemize}
    \item $V_{g\cdot p}$ is Euler-like along $X\cap U_{g\cdot p}$.
    \item $V_{g\cdot p}$ is Tangent to higher strata inside $U_{g\cdot p}$.
    \item $V_{g\cdot p}$ satisfies
    \[
    D\left(m^0_{Y_{i_s}}, \rho_{Y_{i_s}}\right)\cdot V_{g\cdot p}=0.
    \]
    Moreover, the last equality and Lemma \ref{only_top_stratum} imply that
    \[
    \forall 1\leq j \leq s: \: D\left(m^0_{Y_{i_j}}, \rho_{Y_{i_j}}\right)\cdot V_{g\cdot p}=0. 
    \]
\end{itemize}

$G\cdot p$ is compact and ${\{U_{g\cdot p}\}}_{g\in G}$ is a cover of it in $W_{G\cdot p}$. Take a finite sub-cover ${\{U_{g_l\cdot p}\}}$, the union $U_{G\cdot p} = \bigcup_l U_{g_l\cdot p}$ and a partition of unity $\varphi_{l,G\cdot p}$ of $U_{G\cdot p}$ subordinate to it. The following vector field
\[
V_{G\cdot p} \coloneqq \sum_l \varphi_{l,G\cdot p}\cdot V_{g_l\cdot p}
\]
is Euler-like along $X\cap U_{G\cdot p}$, tangent to higher strata inside $U_{G\cdot p}$ and satisfies
\begin{equation}\label{eq_tangent_to_level_sets}
\forall 1\leq j \leq s: \: D\left(m^0_{Y_{i_j}}, \rho_{Y_{i_j}}\right)\cdot V_{G\cdot p}=0. 
\end{equation}

Now take $U_{G\cdot p}'\subset U_{G\cdot p}$ a smaller, $G$-invariant neighbourhood of $G\cdot p$ and $V_{G\cdot p}'$ the vector field obtained by averaging $V_{G\cdot p}\lvert_{U_{G\cdot p}'}$ over $G$. The vector field $V_{G\cdot p}'$ is $G$-invariant, Euler-like along $X\cap U_{G\cdot p}'$, tangent to higher strata inside $U_{G\cdot p}'$ and still satisfies Equation (\ref{eq_tangent_to_level_sets}).

$X$ is closed and $G$-invariant in $M_d$ and therefore the set
\[
{\{U_{G\cdot p}'\}}_{p\in X} \cup \{M_d\setminus X\}
\]
is a $G$-invariant open cover of $M_d$. Let ${\{\varphi_{G\cdot p_j}\}}_{j\in J}\cup \{ \varphi_{M\setminus X}\}$ be a $G$-invariant partition of unity subordinate to it and define
\[
V_X' = \sum_{j\in J}\varphi_{G\cdot p_j}\cdot V_{G\cdot p_j}'.
\]
This vector field is defined on $M_d$, $G$-invariant, Euler-like along $X$ and tangent to higher strata.

Similarly to step $0$, we use Lemma \ref{submersion_near} with the Euler-like vector field $V_X'$ and the compactness of $G$ to obtain a $G$-invariant neighbourhood $W_X'$ of $X$ such that 
\[
\left( m^0\circ \Phi^{-1} \right)\Big\lvert_{Y\cap W_X'}
\]
is a submersion for each $Y>X$. Now use Lemma \ref{convenient_euler} to obtain a $G$-invariant bump function $\varphi_X:M_d\to [0,1]$ supported in $W_X'$ such that
\[
V_X\coloneqq \varphi_X\cdot V_X':M_d \to TM_d
\]
is a $G$-invariant convenient Euler-like vector field along $X$, inducing a $G$-equivariant tubular neighbourhood embedding $\Phi_X:\nu_X\to M_d$. Denote
\begin{align*}
T_X &= \Phi_X(\nu_X) \\
m_X^t &= \Phi_X\circ m^t\circ \Phi_X^{-1}
\end{align*}
and take $\rho_X:T_X\to \R_{\geq 0}$ to be the pullback by $\Phi^{-1}$ of any $G$-invariant distance function on $\nu_X$.

\begin{claim_nn}
Adding ${\{\left(V_X, T_X, m_X^t, \rho_X \right)\}}_{\dim{X}=d}$ to the existing collection ${\{T_Y\}}_{\dim{Y}<d}$ yields a collection satisfying Properties (\ref{in_proof_equivariant}), (\ref{in_proof_adjusted}), (\ref{in_proof_tangential}) and (\ref{in_proof_submersion}).
\end{claim_nn}
Indeed:
\begin{enumerate}
    \item By the induction assumption and construction.
    \item By the induction assumption and construction.
    \item By the induction assumption and construction.
    \item By the induction assumption, construction and Lemma \ref{distance_submersion_too}.
\end{enumerate}

Note that property (5) is \textbf{not} necessarily satisfied --- let $v\in T_X\cap T_Y$ with $Y<X$ and $\dim{X}=d$. Then
\[
V_X(v) = \varphi_X(v)\cdot\sum_{j=1}^r\varphi_{G\cdot p_j}(v)\cdot V_{G\cdot p_j}'(v)
\]
and it is possible that some $p_j\in \partial T_Y$, so that $D\left(m^0_Y, \rho_Y\right)\cdot V_{G\cdot p_j}'$ is not necessarily $0$.

To solve this problem we shrink all tubular neighbourhoods of strata of dimension $<d$ using $h\coloneqq h_{\nicefrac{1}{3},\nicefrac{2}{3}}$ (Notation \ref{bump_function}). Denote by
\[
{\Bigl\{\left( \widetilde{V_X}, \widetilde{T_X}, \widetilde{m_X^t}, \widetilde{\rho_X} ,\right)\Bigr\}}_{\dim{X}< d+1}
\]
the collection obtained by such a shrinking and note that Properties (\ref{in_proof_equivariant}), (\ref{in_proof_adjusted}), (\ref{in_proof_tangential}) and (\ref{in_proof_submersion}) are preserved under such shrinking.
\begin{claim_nn}
The above collection also satisfies Property (\ref{in_proof_pre_commute}).
\end{claim_nn}
Let $Y<X$ be strata of dimension $< d+1$. We show that $\widetilde{T_X},\widetilde{T_Y}$ pre-commute.

If $\dim{X}<d$, then by the induction assumption the neighbourhoods before the shrinking $T_X,T_Y$ pre-commute. Therefore $V_X\lvert_{T_Y}$ is tangent to level sets of $\left(m_Y^0 , \rho_Y \right)$.
By Lemma \ref{shrinking_distance_level_sets} the level sets of 
$\left(\widetilde{m_Y^0} , \widetilde{\rho_Y} \right)$
are exactly those of 
$\left(m_Y^0 , \rho_Y \right)\big\lvert_{\widetilde{T_Y}}$
and since $\widetilde{V_X}$ is some function times $V_X$ we get that $\widetilde{V_X}\lvert_{\widetilde{T_Y}}$ is tangent to level sets of 
$\left(\widetilde{m_Y^0} , \widetilde{\rho_Y} \right)$.

If $\dim{X}=d$ then $\widetilde{T_X}=T_X$ and $\widetilde{T_Y}$ is the shrinking of $T_Y$ by $h$. Let $v\in \widetilde{T_X}\cap \widetilde{T_Y}$, then
\[
\widetilde{V_X}(v) = \varphi_X(v)\cdot\sum_{j=1}^r\varphi_{G\cdot p_j}(v)\cdot V_{G\cdot p_j}'(v)
\]
with $p_j \in X$ and $\varphi_{G\cdot p_j}(v)\neq 0$ for $1\leq j\leq r$. If we show that $p_j\in T_Y$ for all $1\leq j \leq r$ then by the local construction we are done.

Since $\varphi_{G\cdot p_j}(v)\neq 0$ we get that
\[
v\in U_{G\cdot p_j}' \subset (M_d\setminus A_{G\cdot p_j}) \cap (M_d\setminus B_{G\cdot p_j}).
\]
Now if $p_j\notin T_Y$ there are 2 cases, both of which lead to contradiction:
\begin{enumerate}
    \item $p_j\notin \overline{T_Y}$. But then by the definition (\ref{in_proof_Bp}) of $B_{G\cdot p_j}$ we get
    \[
    v\in \widetilde{T_Y}\subset \overline{T_Y}\subset B_{G\cdot p_j}
    \]
    which implies $v\notin U_{G\cdot p_j}'$.
    \item $p_j\in \partial{T_Y}$. But then by the definition (\ref{in_proof_Ap}) of $A_{G\cdot p_j}$ we get
    \[
    v\in \widetilde{T_Y}\subset \{ \rho_Y \leq 1\} \subset A_{G\cdot p_j}
    \]
    which implies $v\notin U_{G\cdot p_j}'$.
\end{enumerate}

This finishes the construction of step $d$ and thus the proof. \qed

\section{Commutative control data}\label{commutative_control_data_section}
\begin{thm}\label{commutative_control_data_theorem}
Let $M$ be a manifold and let $(C,\S)$ be a stratified subset of it. Assume there exist tangential control data ${\{ \left(T_X, \rho_X, m_X^t, V_X \right)\}}_{X\in\S}$.

Then there exists a modified collection of tubular neighbourhoods that remains adjusted, tangential and pre-commutative and is also commutative.

If $G$ is a compact Lie group acting on $M$, the stratification $\S$ is a $G$-stratification and the control data is $G$-equivariant then the modified collection is $G$-equivariant.
\end{thm}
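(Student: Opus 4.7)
My approach is descending induction on the dimension $d$ of the stratum being modified: strata of dimension $d$ are handled only after all strata of dimension $>d$. The base case is vacuous since maximal-dimensional strata admit no higher strata. At the inductive step I fix $X\in\S$ with $\dim X=d$ and assume that for every pair of comparable strata $Y<Y'$ of dimension $>d$, one has $[V_Y,V_{Y'}]=0$ and $\L_{V_Y}\rho_{Y'}=0$. The goal is to modify $V_X$ (keeping $\rho_X$ unchanged) so that $[V_X,V_Y]=0$ and $\L_{V_X}\rho_Y=0$ hold for every $Y>X$, while preserving adjustedness, tangentiality, pre-commutativity, and $G$-equivariance.

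The key local construction replaces $V_X$ on $T_X\cap T_Y$ by its ``$V_Y$-invariantization''. In bundle coordinates $(y,z)$ on $\nu_Y$ in which $V_Y=\sum_i z_i\,\partial/\partial z_i$, $Y=\{z=0\}$, and $\rho_Y(y,z)=g_Y(y)(z,z)$, tangentiality of $V_X$ to $Y$ forces
\[
V_X=\sum_i a_i(y,z)\frac{\partial}{\partial y_i}+\sum_j b_j(y,z)\frac{\partial}{\partial z_j},\qquad b_j(y,0)=0.
\]
I set
\[
V_X^{\natural,Y}(y,z)\coloneqq\sum_i a_i(y,0)\frac{\partial}{\partial y_i}+\sum_{j,k}c_{jk}(y)\,z_k\frac{\partial}{\partial z_j},
\]
where $(c_{jk}(y))$ is the $g_Y(y)$-antisymmetric part of the Jacobian $(\partial b_j/\partial z_k)(y,0)$. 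Intrinsically, $V_X^{\natural,Y}$ is the linearization $\lim_{s\to 0^+}(m_Y^s)^*V_X$ followed by projection onto $g_Y$-skew fiber endomorphisms; by construction $[V_X^{\natural,Y},V_Y]=0$ and $\L_{V_X^{\natural,Y}}\rho_Y=0$.

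I then globalize through the convex combination
\[
V_X'\coloneqq (h\circ\rho_Y)\cdot V_X^{\natural,Y}+(1-h\circ\rho_Y)\cdot V_X
\]
on $T_X\cap T_Y$, with $h=h_{a,b}$ a bump function (Notation \ref{bump_function}) that equals $1$ near $\rho_Y=0$ and vanishes for $\rho_Y\geq b$, extending by $V_X$ elsewhere. Since $T_Y\cap X=\emptyset$ by adjustedness, $V_X'$ coincides with $V_X$ in a neighbourhood of $X$ and thus remains Euler-like along $X$; Lemma \ref{convenient_euler} recovers convenience and the shrinking procedure of Section \ref{tangential_control_data_section} restores adjustedness. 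To handle the finite collection $\{Y:Y>X\}$ simultaneously, I iterate this construction over the poset, processing smaller $Y$ before larger: the inductive hypothesis $[V_Y,V_{Y'}]=0$, $\L_{V_Y}\rho_{Y'}=0$ for $Y<Y'$ above $X$ guarantees that modification with respect to $Y'$ preserves the invariance already achieved under $V_Y$, while incomparable pairs $Y,Y'$ above $X$ have $T_Y\cap T_{Y'}=\emptyset$ by \ref{ad1}, so their modifications occur on disjoint sets. $G$-equivariance is preserved by using $G$-invariant bump functions and fiber inner products $g_Y$, together with averaging each $V_X'$ over $G$ via Proposition \ref{euler_like_linearity}.

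The main expected obstacle is showing that pre-commutativity $Dm_X^0\cdot V_Y=0$ survives the change $V_X\mapsto V_X'$. The companion condition $\L_{V_Y}\rho_X=0$ is immediate since neither $V_Y$ nor $\rho_X$ is altered. For the former, wherever $h\circ\rho_Y=1$ the modified $V_X'$ genuinely commutes with $V_Y$, so $(m_X^0)^{\mathrm{new}}\circ m_Y^s=m_Y^s\circ(m_X^0)^{\mathrm{new}}$ there; combining this with the old pre-commutativity in the transition annulus $0<h\circ\rho_Y<1$ and with $(m_X^0)^{\mathrm{new}}=m_X^0$ outside $T_Y$ (in particular near $X$), a careful flow-line propagation argument should yield $D(m_X^0)^{\mathrm{new}}\cdot V_Y=0$ on all of $T_X\cap T_Y$. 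I expect this propagation, together with the bookkeeping required to iterate through the poset of higher strata without breaking previously achieved compatibilities, to be the technical heart of the proof.
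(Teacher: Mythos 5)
Your strategy diverges substantially from the paper's: you keep the higher tubes $(\rho_Y, m_Y^t, V_Y)$ fixed and modify the lower $V_X$, whereas the paper keeps the lower $(V_X, \rho_X)$ fixed (apart from a shrinking) and \emph{conjugates} the higher data, replacing $m_Y^t$ by $m_X^{f\circ\rho_X}\circ m_Y^t\circ m_X^{(f\circ\rho_X)^{-1}}$ and $\rho_Y$ by $\rho_Y\circ m_X^{(f\circ\rho_X)^{-1}}$ with $f(t)=\sqrt t$ near $0$. This choice is not cosmetic: because it also alters $\rho_Y$, the conjugation makes commutativity, $\L_{V_X}\widetilde{\rho_Y}=0$ and tangency of $\widetilde{V_Y}$ hold essentially by definition (the modified flow is a conjugate of the old one by strata-preserving maps $m_X^s$), and disjointness of the $T_X$ of a fixed dimension eliminates any interference between conjugations.

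There are two concrete gaps in your local construction. First, the claim $\L_{V_X^{\natural,Y}}\rho_Y=0$ does not follow from the formula as written: with $\rho_Y(y,z)=\sum g_{jk}(y)z_jz_k$,
\[
\L_{V_X^{\natural,Y}}\rho_Y=\sum_i a_i(y,0)\,\partial_{y_i}g_{jk}(y)\,z_jz_k + 2\sum_{j,k,l} c_{jk}(y)g_{jl}(y)\,z_kz_l,
\]
and only the second term is killed by $g_Y$-skewness of $c$; the first term is the variation of the fiber metric along the base motion and is nonzero in general. Making this vanish would require either that $g_Y$ be covariantly constant with respect to the coordinate frame you chose (i.e., that your trivialization be $g_Y$-flat, which is an extra hypothesis) or that you also modify $\rho_Y$, which you explicitly declined to do. Second, and more seriously, the projection onto $g_Y$-skew fiber endomorphisms destroys tangency of $V_X$ to the other higher strata $Y'>Y$. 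A stratum $Y'\cap T_Y$ corresponds under $\Phi_Y$ to a cone bundle $C_{Y'}\subset\nu_Y$, and a fiber endomorphism $c$ tangent to $C_{Y'}$ need not have a $g_Y$-skew part tangent to $C_{Y'}$: already in $\R^2$ with $C_{Y'}$ the $x$-axis, the endomorphism $\begin{pmatrix}1&1\\0&1\end{pmatrix}$ preserves the axis but its skew part $\frac12\begin{pmatrix}0&1\\-1&0\end{pmatrix}$ generates rotations and does not. So the modified $V_X'$ loses the tangential property of Definition \ref{tangential}, which is the very backbone of Definition \ref{commute}. Finally, you yourself flag that preservation of $Dm_X^0\cdot V_Y=0$ under $V_X\mapsto V_X'$ is unproved and is ``the technical heart''; since the flow of $V_X'$ inside $T_Y$ is no longer a reparametrization of the flow of $V_X$, the projection $m_X^0$ genuinely changes on $T_X\cap T_Y$, and your sketch of a ``flow-line propagation argument'' is not a proof. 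Taken together these gaps indicate that modifying only $V_X$ while freezing $\rho_Y$, $V_Y$, $m_Y^t$ does not suffice, and that the paper's conjugation of the higher tubes (Lemmas \ref{conjugating_by_some_func}--\ref{conjugating_by_sqrt_distace}, Corollary \ref{conjugating_commute}) is doing real work that your proposal would need to replicate by other means.
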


We call such a collection \textbf{commutative tangential control data}. For the construction of it we will use an inductive process similar to the one used by Goresky in the construction of \textit{family of lines} in \cite{goresky_thesis,goresky_triangulation}. Our main technical tool in the induction step is a notion of "conjugating" $T_Y$ by $T_X$ for a pair of strata $X<Y$.

\subsection{Conjugating tubular neighbourhoods}
\begin{lem}\label{conjugating_by_some_func}
Let $X<Y$ be strata with $\dim{Y}=d$. Let $T_X,T_Y$ be convenient tubular neighbourhoods of $X,Y$ such that $V_X$ is tangent to $Y$ and such that $T_X,T_Y$ pre-commute. Let $f:\R_{>0}\rightarrow \R_{>0}$ be any smooth map that is $\equiv 1$ on $[1,\infty)$. 

Then the map
\begin{equation}\label{edited_euler_flow}
    \widetilde{m}_{Y}^{\exp(t)}(v) = \left( m_X^{(f\circ \rho_X)(v)} \circ m_Y^{\exp(t)} \circ  m_X^{{(f\circ \rho_X)}^{-1}(v)} \right)(v)
\end{equation}
is well defined on $M_d$ and is the flow of
\begin{equation}\label{edited_euler_vector}
\widetilde{V_Y}(v) := {Dm_X^{(f\circ \rho_X)(v)}}_{m_X^{{(f\circ \rho_X)}^{-1}(v)}(v)} \cdot V_Y\left( m_X^{{(f\circ \rho_X)}^{-1}(v)}(v)\right).
\end{equation}
Moreover, $\widetilde{V_Y}(v)$ is a convenient Euler-like vector field along $Y$.
\end{lem}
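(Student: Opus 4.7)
The plan is to verify in order: (a) that $\widetilde V_Y$ and $\widetilde m_Y^{\exp(t)}$ make sense as smooth objects on $M_d$; (b) that $\widetilde m_Y^{\exp(t)}$ is the flow of $\widetilde V_Y$; (c) that $\widetilde V_Y$ is Euler-like along $Y$; and (d) that $\widetilde V_Y$ is convenient. Step (c) will be the main obstacle, since it requires a careful first-order analysis at points of $Y$ where the rescaling $s(v):=f(\rho_X(v))$ is not constant.

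For (a), since $f \equiv 1$ on $[1,\infty)$ and $\{\rho_X \leq 1\}$ is closed in $M_d$ by (AD4), $f\circ\rho_X$ extends smoothly from $T_X$ to all of $M_d$ by the value $1$ on the open set $M_d\setminus\{\rho_X\leq 1\}$; on this set the defining formulas reduce to $V_Y$ and to $m_Y^{\exp(t)}$ respectively because $m_X^1=\operatorname{id}$ and $Dm_X^1=\operatorname{id}$, and the two descriptions agree on the overlap $\{\rho_X>1\}\cap T_X$. For (b), evaluation at $t=0$ gives $v$ and differentiating in $t$ at $t=0$ recovers $\widetilde V_Y$ by the chain rule. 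The one-parameter group property then follows from the invariance $\rho_X(\widetilde m_Y^{\exp(t)}(v)) = \rho_X(v)$, which I verify by combining the homogeneity $\rho_X\circ m_X^s = s^2\rho_X$ with pre-commutativity (PC2); this keeps $s=f(\rho_X(v))$ constant along $\widetilde m_Y^{\exp(t)}$ and makes the outer $m_X^{\pm s}$ factors telescope when composing at different times.

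For (c), I appeal to Remark \ref{euler_like_deriv} and show $\nu(\widetilde V_Y) = \E$ on $Y$. Fix $y\in Y$, put $s_0 := f(\rho_X(y))$ and $y' := m_X^{s_0^{-1}}(y)$; tangency of $V_X$ to $Y$ gives $y'\in Y$, hence $V_Y(y')=0$ and $\widetilde V_Y(y)=0$. Differentiating the defining formula for $\widetilde V_Y$ at $y$, every term carrying $V_Y(y')$ as a factor drops out. Moreover, the derivative of the inner argument $m_X^{s(v)^{-1}}(v)$ in $v$ contributes, beyond $Dm_X^{s_0^{-1}}|_y$, only a multiple of $V_X(y')$, which lies in $T_{y'}Y$ and is therefore annihilated by $DV_Y(y')$ since $V_Y|_Y=0$. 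What survives is $D\widetilde V_Y(y) = Dm_X^{s_0}|_{y'}\circ DV_Y(y')\circ Dm_X^{s_0^{-1}}|_y$. Passing to $\nu_Y$, tangency of $V_X$ to $Y$ makes $Dm_X^{\pm s_0}$ induce mutually inverse isomorphisms $\nu_{Y,y}\leftrightarrow\nu_{Y,y'}$, while the middle factor descends to the identity on $\nu_{Y,y'}$ because $V_Y$ is Euler-like along $Y$. Hence the composition is the identity on $\nu_{Y,y}$, giving $\nu(\widetilde V_Y)|_y = \E|_y$.

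For (d), completeness follows from the conjugation formula together with completeness of $V_Y$; condition (CE2) reduces, via $\widetilde V_Y(v)\neq 0 \iff V_Y(m_X^{s^{-1}}(v))\neq 0$, to the corresponding property for $V_Y$, with the limit of $\widetilde m_Y^{\exp(t)}(v)$ as $t\to-\infty$ landing in $m_X^s(Y)=Y$ by tangency of $V_X$ to $Y$; and condition (CE1) follows because the induced projection $\widetilde m_Y^0(v) = m_X^s\circ m_Y^0\circ m_X^{s^{-1}}(v)$ inherits a smooth extension from the one provided by conveniency of $V_Y$.
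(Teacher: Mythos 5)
Your proposal is correct, and for most of the steps it runs parallel to the paper's proof, but your treatment of the Euler-like condition (c) takes a genuinely different route. The paper works through the characterization in Remark~\ref{euler_like_deriv}: it fixes a test function $g$ vanishing on $Y$, uses the Euler-like identity $\L_{V_Y}(g\circ m_X^s) = (g\circ m_X^s) + r_s$ with a remainder vanishing to order two, and then differentiates the resulting expression along $Y$ to show $g - \L_{\widetilde V_Y}g$ vanishes to order two. You instead work with Definition~\ref{euler_like} directly: you show $\widetilde V_Y$ vanishes on $Y$, compute the intrinsic linearization $D\widetilde V_Y(y) = Dm_X^{s_0}|_{y'}\circ DV_Y(y')\circ Dm_X^{s_0^{-1}}|_y$ (after discarding the terms annihilated because $V_Y(y')=0$ and because $DV_Y(y')$ kills $T_{y'}Y\ni V_X(y')$), and then pass to the normal bundle to see the composite is the identity on $\nu_{Y,y}$. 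This is a cleaner, more geometric argument; it avoids the auxiliary family of remainder functions $r_s$. Note, though, that you misattribute the criterion: what you actually use is Definition~\ref{euler_like}, the normal-bundle characterization $\nu(\widetilde V_Y)=\E$, not Remark~\ref{euler_like_deriv}, which is the function-level version the paper uses. For part (b), you verify the one-parameter group identity plus the time-zero derivative, whereas the paper verifies the ODE at an arbitrary $t_0$ via uniqueness of integral curves; both hinge on the same key invariance $\rho_X\circ\widetilde m_Y^{\exp(t)}=\rho_X$ from (PC2). Finally, your treatment of condition (CE1) is a one-liner, whereas the paper must and does prove that the set on which the conjugated projection $m_X^{s}\circ\pi\circ m_X^{s^{-1}}$ is defined is actually \emph{open}; this is the nontrivial content of (CE1), and your sketch should really include that openness argument rather than leave it as "inherits a smooth extension."
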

\begin{proof}
First, since $T_X$ is a convenient tubular neighbourhood of $X$, the function $f\circ \rho_X$ can be smoothly extended to $1$ on $M_d$ so that $m_X^{(f\circ \rho_X)(v)}$ is well defined. The map $m_Y^{\exp(t)}$ is the flow of the complete vector field $V_Y$ so it is defined for all $t$, so Equation (\ref{edited_euler_flow}) is well defined.

We show (\ref{edited_euler_flow}) is the flow of (\ref{edited_euler_vector}). By the uniqueness of the integral flow of a vector field it is enough to show that
\begin{align*}
& \frac{d}{dt}\Big\lvert_{t_0} \left( m_X^{(f\circ \rho_X)(v)} \circ m_Y^{\exp(t)} \circ  m_X^{{(f\circ \rho_X)}^{-1}(v)} \right)(v) \\
=& \widetilde{V_Y}\left(\left( m_X^{(f\circ \rho_X)(v)} \circ m_Y^{\exp(t_0)} \circ  m_X^{{(f\circ \rho_X)}^{-1}(v)} \right)(v)\right).
\end{align*}

Note that by pre-commutativity
\begin{align*}
& \left(\rho_X \circ m_X^{(f\circ \rho_X)(v)} \circ m_Y^{\exp(t)} \circ  m_X^{{(f\circ \rho_X)}^{-1}(v)} \right)(v) \\
=& {(f\circ \rho_X)(v)}^2\cdot \left(\rho_X \circ m_Y^{\exp(t)} \circ  m_X^{{(f\circ \rho_X)}^{-1}(v)} \right)(v) \\
= & {(f\circ \rho_X)(v)}^2\cdot \left(\rho_X \circ  m_X^{{(f\circ \rho_X)}^{-1}(v)} \right)(v) \\
= & \rho_X(v)
\end{align*}
which implies 
\[
\left(f \circ \rho_X \circ m_X^{(f\circ \rho_X)(v)} \circ m_Y^{\exp(t_0)} \circ  m_X^{{(f\circ \rho_X)}^{-1}(v)} \right) = (f\circ \rho_X).
\]

Now using the fact the $m_Y^{\exp(t)}$ is the flow of $V_Y$ we obtain
\begin{align*}
& \widetilde{V_Y}\left(\left( m_X^{(f\circ \rho_X)(v)} \circ m_Y^{\exp(t_0)} \circ  m_X^{{(f\circ \rho_X)}^{-1}(v)} \right)(v)\right) \\
=& {Dm_X^{(f\circ \rho_X)(v)}}_{\left(m_X^{{(f\circ \rho_X)}^{-1}(v)}\circ m_X^{(f\circ \rho_X)(v)} \circ m_Y^{\exp(t_0)} \circ  m_X^{{(f\circ \rho_X)}^{-1}(v)}\right)(v)} \\
& \cdot V_Y\left( \left(m_X^{{(f\circ \rho_X)}^{-1}(v)}\circ m_X^{(f\circ \rho_X)(v)} \circ m_Y^{\exp(t_0)} \circ  m_X^{{(f\circ \rho_X)}^{-1}(v)}\right)(v)\right) \\
= & {Dm_X^{(f\circ \rho_X)(v)}}_{(m_Y^{\exp(t_0)} \circ m_X^{{(f\circ \rho_X)}^{-1}(v)})(v)}\cdot V_Y\left( (m_Y^{\exp(t_0)} \circ m_X^{{(f\circ \rho_X)}^{-1}(v)})(v)\right) \\
= & \frac{d}{dt}\Big\lvert_{t_0} \left( m_X^{(f\circ \rho_X)(v)} \circ m_Y^{\exp(t)} \circ  m_X^{{(f\circ \rho_X)}^{-1}(v)} \right)(v).
\end{align*}

\begin{claim_nn}
$\widetilde{V_Y}$ is Euler-like along $Y$.
\end{claim_nn}
We use the equivalent property to being Euler like from Remark \ref{euler_like_deriv}. Let $g\in C^\infty(M_d)$ that vanishes on $Y$, we want to show that
\[
g - \L_{\widetilde{V_Y}}g
\]
vanishes to order 2 on $Y$.

For all $s\in\R_{>0}$, since $m_X^s$ preserves $Y$ we get that $(g\circ m_X^s)$ vanishes on $Y$. Therefore by $V_Y$ being Euler-like we obtain
\[
\L_{V_Y}(g\circ m_X^s)(v) = (g\circ m_X^s)(v) + r_s(v)
\]
where $r_s$ vanishes to order 2 on $Y$, i.e.
\begin{equation}\label{remainder_vanish}
r_s\lvert_Y=0,\: Dr_s\lvert_Y = 0. 
\end{equation}

Now
\begin{align*}
    (\mathcal{L}_{\widetilde{V_Y}}g)(v) &= \frac{d}{dt}\Big\lvert_0 \left(g\circ m_X^{(f\circ \rho_X)(v)}\circ m_Y^{\exp(t)}\circ m_X^{{(f\circ \rho_X)(v)}^{-1}} \right)(v) \\
    &= \frac{d}{dt}\Big\lvert_0 \left (g\circ m_X^{(f\circ \rho_X)(v)}\circ m_Y^{\exp(t)} \right)\left(m_X^{{(f\circ \rho_X)(v)}^{-1}}(v)\right) \\
    &= \L_{V_Y}\left(g\circ m_X^{(f\circ \rho_X)(v)} \right)\left(m_X^{{(f\circ \rho_X)(v)}^{-1}}(v)\right) \\
    &= \left (g\circ m_X^{(f\circ \rho_X)(v)} \right)\left(m_X^{{(f\circ \rho_X)(v)}^{-1}}(v)\right) + r_{(f\circ \rho_X)(v)}\left(m_X^{(f\circ \rho_X)(v)^{-1}}(v)\right) \\
    &= g(v) + \left(r_{(f\circ \rho_X)(v)}\circ m_X^{{(f\circ \rho_X)(v)}^{-1}}\right)(v)
\end{align*}
so it is enough to show that $v\mapsto \left(r_{(f\circ \rho_X)(v)}\circ m_X^{(f\circ \rho_X)(v)^{-1}}\right)(v)$ vanishes to order $2$ on $Y$. One can check that it vanishes on $Y$ so it remains to show that so does its differential.

Let $y\in Y$. Since (\ref{remainder_vanish}) holds for all $s\in\R_{>0}$, it follows that $\frac{d}{ds}\Big\lvert_{s_0}r_s(y)=0$ for all $s_0\in\R_{>0}$. Thus
\begin{align*}
& D\left[v\mapsto \left(r_{(f\circ \rho_X)(v)}\circ m_X^{(f\circ \rho_X)(v)^{-1}}\right)(v)\right](y) \\
=& \underbrace{Dr_{(f\circ \rho_X)(y)}}_{\equiv 0\text{ on }Y}
\left( \underbrace{m_X^{(f\circ \rho_X)(y)^{-1}}(y)}_{\in Y} \right)\cdot D\left[ v\mapsto m_X^{(f\circ \rho_X)(v)^{-1}}(v)\right](y) + \\ 
& + \underbrace{\frac{d}{ds}\Big\lvert_{(f\circ \rho_X)(y)}r_s(y)}_0\cdot D(f\circ \rho_X)(y) \\
= & 0
\end{align*}
which proves $\widetilde{V_Y}$ is Euler-like along $Y$.
\begin{claim_nn}
$\widetilde{V_Y}$ is a convenient Euler-like vector field along $Y$.
\end{claim_nn}
The fact that $\widetilde{V_Y}$ is complete follows from (\ref{edited_euler_flow}) and the assumption that $V_X,V_Y$ are complete.

To prove \ref{conv_euler_conical} note that
\[
\{\widetilde{V_Y} \neq 0 \} = \{v\in M_d\setminus Y: m_X^{(f\circ \rho_X)(v)}(v)\in T_Y \}
\]
so for all $v$ in the LHS we have
\[
\lim_{t\to-\infty} \widetilde{m}_Y^{\exp(t)}(v) = \left( m_X^{(f\circ \rho_X)(v)} \circ m_Y^0 \circ  m_X^{{(f\circ \rho_X)}^{-1}(v)} \right)(v) \in Y
\]
which implies
\[
\{\widetilde{V_Y} \neq 0 \} \subset \{v\in M_d\setminus Y:\: \lim_{t\to-\infty} \widetilde{m}_Y^{\exp(t)}(v) \in Y \}.
\]
The other direction is always true.

For \ref{conv_euler_extendable}, let $\pi$ be an extension of $m_Y^0$ to some neighbourhood $U$ of $\overline{T_Y}$. This extension exists by the assumption that $V_Y$ is convenient. Then
\[
\widetilde{U} \coloneqq \{v\in M_d: m_X^{(f\circ \rho_X)(v)}(v)\in U\}
\]
contains $\overline{T_Y}$ and we claim it is open. Indeed, let $v_0\in \widetilde{U}$ and let $m_{X,f}:M_d\times \R_{>0}\to M_d$ be the smooth function
\[
m_{X,f}(v, t) = m_X^{f(t)}(v).
\]

Since $m_{X,f}(v_0, \rho_X(v_0))\in U$, there exist a neighbourhood $W_1\subset M_d$ of $v_0$ and an interval $(a,b)$ containing $\rho_X(v_0)$ such that $W_1\times (a,b) \subset m_{X,f}^{-1}(U)$. Denote $W_2 = \rho_X^{-1}(a,b)$ and observe that for any $v\in W_1\cap W_2$ we have
\[
m_X^{(f\circ \rho_X)(v)}(v) = m_{X,f}(v, \rho_X(v)) \in U
\]
so $W_1\cap W_2$ is a neighbourhood of $v_0$ which is contained in $\widetilde{U}$.

Now define $\widetilde{\pi}:\widetilde{U}\to Y$ by
\[
\widetilde{\pi}(v) = \left( m_X^{(f\circ \rho_X)(v)} \circ \pi \circ  m_X^{{(f\circ \rho_X)}^{-1}(v)} \right)(v).
\]
This is an extension of $\widetilde{m}_Y^0$ to $\widetilde{U}$, so it proves \ref{conv_euler_extendable}.
\end{proof}
\begin{lem}\label{conjugating_by_sqrt_vector}
With the notation of \ref{conjugating_by_some_func}, assume in addition that
\[
\forall 0<t\leq 1/2:\: f(t)=\sqrt{t}.
\]

Then for $v\in \widetilde{T_Y}\cap \{\rho_X < 1/2\}$ the following holds 
\[
[V_X,\widetilde{V_Y}](v)=0.
\]
\end{lem}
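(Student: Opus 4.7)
My plan is to deduce the vanishing of the bracket from commutativity of the flows of $V_X$ and $\widetilde{V_Y}$ in a neighborhood of $v$. Recall that the flow of $V_X$ is $\Psi_{V_X}^s=m_X^{\exp(s)}$ and the flow of $\widetilde{V_Y}$ is the conjugated flow $\widetilde{m}_Y^{\exp(t)}$ from Equation~(\ref{edited_euler_flow}). Since on $\{\rho_X<1/2\}$ the hypothesis on $f$ gives $(f\circ\rho_X)(w)=\sqrt{\rho_X(w)}$, in this region the formula simplifies to
\[
\widetilde{m}_Y^{\exp(t)}(w) = m_X^{\sqrt{\rho_X(w)}}\circ m_Y^{\exp(t)}\circ m_X^{1/\sqrt{\rho_X(w)}}(w).
\]

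First I would verify that both $m_X^{\exp(s)}$ and $\widetilde{m}_Y^{\exp(t)}$ preserve the locus $\{\rho_X<1/2\}$ for $s,t$ in some neighborhood of $0$ (the latter uses the computation $\rho_X\circ\widetilde{m}_Y^{\exp(t)}=\rho_X$ already performed in the proof of Lemma~\ref{conjugating_by_some_func}, which itself rests on pre-commutativity $\rho_X\circ m_Y^{\exp(t)}=\rho_X$ together with homogeneity $\rho_X\circ m_X^a = a^2\rho_X$). Then I would compute each composition at a point $v$ with $\rho_X(v)<1/2$: on one side
\[
m_X^{\exp(s)}\bigl(\widetilde{m}_Y^{\exp(t)}(v)\bigr)
= m_X^{e^s\sqrt{\rho_X(v)}}\circ m_Y^{\exp(t)}\circ m_X^{1/\sqrt{\rho_X(v)}}(v)
\]
using the semigroup law $m_X^a\circ m_X^b=m_X^{ab}$; on the other side I would use $\rho_X(m_X^{\exp(s)}(v)) = e^{2s}\rho_X(v)$ to obtain $\sqrt{\rho_X(m_X^{\exp(s)}(v))}=e^s\sqrt{\rho_X(v)}$, so
\[
\widetilde{m}_Y^{\exp(t)}\bigl(m_X^{\exp(s)}(v)\bigr)
= m_X^{e^s\sqrt{\rho_X(v)}}\circ m_Y^{\exp(t)}\circ m_X^{e^{-s}/\sqrt{\rho_X(v)}}\circ m_X^{\exp(s)}(v),
\]
and another application of the semigroup law collapses the last two factors to $m_X^{1/\sqrt{\rho_X(v)}}(v)$, matching the other expression.

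Since the two flows commute in a whole neighborhood of $v$ (for sufficiently small $s,t$), the Lie bracket $[V_X,\widetilde{V_Y}]$ vanishes at $v$. The computation is essentially algebraic once the two key identities $\rho_X\circ m_X^a=a^2\rho_X$ and $m_X^a\circ m_X^b=m_X^{ab}$ are invoked; the only real bookkeeping obstacle is checking that all compositions stay inside the domains of definition, which is handled by the $\rho_X$-preservation observations above and the completeness of $V_X$ on $T_X$ and of $\widetilde{V_Y}$ on $M_d$ established in Lemma~\ref{conjugating_by_some_func}.
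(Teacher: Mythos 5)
Your proposal is correct and follows essentially the same route as the paper's proof: derive the identity $(f\circ\rho_X\circ m_X^{\exp(s)})(v)=\exp(s)\cdot(f\circ\rho_X)(v)$ for $s$ near $0$ (you substitute $f=\sqrt{\phantom{t}}$ explicitly while the paper keeps $f$ symbolic), use the semigroup law $m_X^a\circ m_X^b=m_X^{ab}$ to show the flows of $V_X$ and $\widetilde{V_Y}$ commute near $v$, and conclude that the bracket vanishes. One small phrasing caveat: $m_X^{\exp(s)}$ does not preserve $\{\rho_X<1/2\}$ for $s>0$ (it scales $\rho_X$ by $e^{2s}$), so you should say, as the paper does, that one picks $\varepsilon>0$ small enough (depending on $v$) so that $e^{2s}\rho_X(v)<1/2$ for $|s|<\varepsilon$; this keeps all relevant points in the region where $f\circ\rho_X=\sqrt{\rho_X}$.
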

\begin{proof}
Let $v\in \widetilde{T_Y}\cap \{\rho_X<1/2\}$ and choose $\varepsilon>0$ small enough so that
\[
\forall s\in (-\varepsilon, \varepsilon):\:{\exp(s)}^2\cdot \rho_X(v) < 1/2.
\]

It follows that for $s\in(-\varepsilon, \varepsilon)$ we have
\begin{align*}
& (f\circ \rho_X \circ m_X^{\exp(s)})(v) \\
=& f\left( {\exp(s)}^2\cdot \rho_X(v)\right) \\ 
=& \exp{s} \cdot (f\circ \rho_X)(v)
\end{align*}
and therefore
\begin{align*}
& \left( \widetilde{m}_Y^{\exp(t)} \circ m_X^{\exp(s)}\right)(v) \\ 
=& \left(m_X^{(f\circ \rho_X \circ m_X^{\exp(s)})(v)} \circ m_Y^{\exp(t)} \circ  m_X^{{(f\circ \rho_X \circ m_X^{\exp(s)})(v)}^{-1}} \right)(m_X^{\exp(s)}(v)) \\
=& \left(m_X^{\exp(s)\cdot (f\circ \rho_X)(v)} \circ m_Y^{\exp(t)} \circ  m_X^{{\exp(s)}^{-1}\cdot {(f\circ \rho_X)(v)}^{-1}} \circ m_X^{\exp(s)} \right)(v) \\
=& \left( m_X^{\exp(s)} \circ m_X^{(f\circ \rho_X)(v)} \circ m_Y^{\exp(t)} \circ  m_X^{{(f\circ \rho_X)}^{-1}(v)} \right)(v) \\
= & \left( m_X^{\exp(s)} \circ \widetilde{m}_Y^{\exp(t)} \right) (v)
\end{align*}
which implies $[V_X,\widetilde{V_Y}](v)=0$.
\end{proof}
\begin{lem}\label{conjugating_by_sqrt_distace}
With the notation and assumptions of \ref{conjugating_by_sqrt_vector}, define
\begin{equation}\label{edited_distance}
\widetilde{\rho_Y}(v) := \left( \rho_Y\circ m_X^{{(f\circ \rho_X)}^{-1}(v)} \right)(v).
\end{equation}

Then $\widetilde{\rho_Y}(v)$ is a distance function on $\widetilde{T_Y}$ and for $v\in \widetilde{T_Y}\cap \{\rho_X < 1/2\}$ the following holds
\[
\L_{V_X}\widetilde{\rho_Y} = 0.
\]
\end{lem}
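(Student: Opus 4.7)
The plan is to verify the two defining conditions of a distance function on $\widetilde{T_Y}$ (after identifying it with an open set of $\nu_Y$ via $\widetilde{\Phi_Y}$) directly from formula \eqref{edited_distance}, and then exploit the special form $f(t)=\sqrt{t}$ on $(0,1/2]$ to get the $V_X$-invariance, mirroring the computation from Lemma \ref{conjugating_by_sqrt_vector}.

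First I would check the vanishing locus. Since $V_X$ is tangent to $Y$ by the standing assumption in Lemma \ref{conjugating_by_some_func}, its flow $m_X^s$ preserves $Y$, so $\widetilde{\rho_Y}(v)=0$ if and only if $m_X^{{(f\circ\rho_X)}^{-1}(v)}(v)\in Y$ if and only if $v\in Y$. Next, for the homogeneity $\widetilde{\rho_Y}\circ\widetilde{m}_Y^{\exp(t)}=\exp(2t)\cdot\widetilde{\rho_Y}$, I would reuse the pre-commutativity computation in the proof of Lemma \ref{conjugating_by_some_func} which showed $\rho_X\circ\widetilde{m}_Y^{\exp(t)}=\rho_X$. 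This gives ${(f\circ\rho_X)}^{-1}(\widetilde{m}_Y^{\exp(t)}(v))={(f\circ\rho_X)}^{-1}(v)$, so substituting the definition \eqref{edited_euler_flow} of $\widetilde{m}_Y^{\exp(t)}$ into \eqref{edited_distance} the outer $m_X^{(f\circ\rho_X)(v)}$ is cancelled by the $m_X^{{(f\circ\rho_X)}^{-1}(v)}$ from $\widetilde{\rho_Y}$, reducing to $\rho_Y\bigl(m_Y^{\exp(t)}\circ m_X^{{(f\circ\rho_X)}^{-1}(v)}(v)\bigr)=\exp(2t)\cdot\widetilde{\rho_Y}(v)$ by the fact that $\rho_Y$ is a distance function for $m_Y^{\exp(t)}$.

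For the vanishing of $\mathcal{L}_{V_X}\widetilde{\rho_Y}$ on $\widetilde{T_Y}\cap\{\rho_X<1/2\}$, I would adapt the argument already used in Lemma \ref{conjugating_by_sqrt_vector}. Picking $\varepsilon>0$ so that $\exp(2s)\rho_X(v)<1/2$ for $s\in(-\varepsilon,\varepsilon)$, the assumption $f(t)=\sqrt{t}$ on $(0,1/2]$ yields
\[
(f\circ\rho_X\circ m_X^{\exp(s)})(v)=\exp(s)\cdot(f\circ\rho_X)(v).
\]
Substituting into \eqref{edited_distance} evaluated at $m_X^{\exp(s)}(v)$, the scalars $\exp(s)$ and $\exp(-s)$ combine with the group law of $m_X$ so that $m_X^{{(f\circ\rho_X)}^{-1}(m_X^{\exp(s)}(v))}(m_X^{\exp(s)}(v))=m_X^{{(f\circ\rho_X)}^{-1}(v)}(v)$. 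Applying $\rho_Y$ gives $\widetilde{\rho_Y}\circ m_X^{\exp(s)}=\widetilde{\rho_Y}$ on the relevant set, and differentiating in $s$ at $0$ yields $\mathcal{L}_{V_X}\widetilde{\rho_Y}(v)=0$.

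The only delicate point is bookkeeping the orders of composition in the scaling calculation, since one must use $\rho_X\circ\widetilde{m}_Y^{\exp(t)}=\rho_X$ before the cancellation makes sense; this is precisely the same manipulation that appears in the proof of Lemma \ref{conjugating_by_some_func}, so no genuinely new idea is required beyond the ones already in the preceding two lemmas.
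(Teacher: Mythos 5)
Your proposal is correct and follows the same route as the paper: the $V_X$-invariance is established by the same $\varepsilon$-small reparametrization $(f\circ\rho_X\circ m_X^{\exp(s)})(v)=\exp(s)\cdot(f\circ\rho_X)(v)$ followed by the $m_X$-group-law cancellation. For the distance-function claim the paper only says ``one can check this using Equation (\ref{edited_euler_flow})'', and your vanishing-locus and homogeneity verification (using $\rho_X\circ\widetilde{m}_Y^{\exp(t)}=\rho_X$ from the proof of Lemma \ref{conjugating_by_some_func}) supplies exactly the intended details.
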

\begin{proof}
One can check that $\widetilde{\rho_Y}$ is a distance function using Equation (\ref{edited_euler_flow}).

Let $v\in \widetilde{T_Y}\cap \{\rho_X<1/2\}$. 
Similarly to the  proof of Lemma \ref{conjugating_by_sqrt_vector}, choose $\varepsilon>0$ small enough such that for all $s\in(-\varepsilon, \varepsilon)$ we have
\[
(f\circ \rho_X \circ m_X^{\exp(s)})(v)=\exp(s) \cdot (f\circ \rho_X)(v).
\]

Now 
\begin{align*}
& \left( \widetilde{\rho_Y} \circ m_X^{\exp(s)} \right)(v) \\
=& \left( \rho_Y\circ m_X^{{(f\circ \rho_X \circ m_X^{\exp(s)})}^{-1}(v)} \right)\left( m_X^{\exp(s)}(v) \right) \\
=& \left( \rho_Y\circ m_X^{{\exp(s)}^{-1}\cdot {(f\circ \rho_X)(v)}^{-1}} \circ m_X^{\exp(s)} \right)(v) \\
=& \left( \rho_Y\circ m_X^{{(f\circ \rho_X)}^{-1}(v)} \right)(v) \\
=& \widetilde{\rho_Y}(v) 
\end{align*}
and the result follows.
\end{proof}
\begin{cor}\label{conjugating_commute}
With the notation and assumptions of \ref{conjugating_by_sqrt_distace}, let $h\coloneqq h_{\nicefrac{1}{4},\nicefrac{1}{2}}$ (Notation \ref{bump_function}) and let $\widetilde{T_X}$ be the shrinking of $T_X$ by $h$.

Then $\widetilde{T_X},\widetilde{T_Y}$ commute.
\end{cor}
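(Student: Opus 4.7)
The plan is to verify the three ingredients that, by Definition \ref{commute}, together establish that $\widetilde{T_X}$ and $\widetilde{T_Y}$ commute: pre-commutativity, tangentiality of $\widetilde{V_X}$ to $Y$, and the two commutativity identities $[\widetilde{V_X},\widetilde{V_Y}] = 0$ and $\L_{\widetilde{V_X}}\widetilde{\rho_Y} = 0$. The key preliminary observation is that, since $h = h_{\nicefrac{1}{4},\nicefrac{1}{2}}$ vanishes on $[\nicefrac{1}{2},\infty)$, the shrunken vector field $\widetilde{V_X} = (h \circ \rho_X)\cdot V_X$ (extended by zero off $T_X$) vanishes wherever $\rho_X \geq \nicefrac{1}{2}$. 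By condition \ref{conv_euler_conical}, this forces $\widetilde{T_X} = X \sqcup \{\widetilde{V_X}\neq 0\} \subset \{\rho_X < \nicefrac{1}{2}\}$, so on $\widetilde{T_X}\cap \widetilde{T_Y}$ the hypothesis $\rho_X < \nicefrac{1}{2}$ of Lemmas \ref{conjugating_by_sqrt_vector} and \ref{conjugating_by_sqrt_distace} is automatically met.

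Tangentiality of $\widetilde{V_X}$ to $Y$ is immediate since $\widetilde{V_X}$ is a scalar multiple of the tangent-to-$Y$ vector field $V_X$. For pre-commutativity, the integral curves of $\widetilde{V_X}$ are time-reparametrizations of those of $V_X$ restricted to $\widetilde{T_X}$, so $\widetilde{m}_X^0 = m_X^0\big\lvert_{\widetilde{T_X}}$; substituting Equation (\ref{edited_euler_flow}) and using $m_X^0 \circ m_X^s = m_X^0$ together with the pre-commutativity of the original pair $(T_X, T_Y)$ gives $\widetilde{m}_X^0\circ \widetilde{m}_Y^t = \widetilde{m}_X^0$. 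For the distance, the identity $\rho_X \circ \widetilde{m}_Y^t = \rho_X$ derived inside the proof of Lemma \ref{conjugating_by_some_func}, combined with Lemma \ref{shrinking_distance_level_sets} (which says $\widetilde{\rho_X}$ and $\rho_X$ have the same level sets on $\widetilde{T_X}$), yields $\widetilde{\rho_X}\circ \widetilde{m}_Y^t = \widetilde{\rho_X}$.

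Finally, for the commutativity identities, the Leibniz rule gives
\[
[\widetilde{V_X},\widetilde{V_Y}] = (h\circ\rho_X)\,[V_X,\widetilde{V_Y}] - \bigl(\L_{\widetilde{V_Y}}(h\circ\rho_X)\bigr)\,V_X
\]
on $T_X$. Lemma \ref{conjugating_by_sqrt_vector} supplies $[V_X,\widetilde{V_Y}] = 0$ on $\widetilde{T_Y}\cap\{\rho_X<\nicefrac{1}{2}\}$, while $\L_{\widetilde{V_Y}}(h\circ\rho_X) = h'(\rho_X)\cdot \L_{\widetilde{V_Y}}\rho_X = 0$ by the same $\rho_X$-preservation used above. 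Similarly, $\L_{\widetilde{V_X}}\widetilde{\rho_Y} = (h\circ\rho_X)\,\L_{V_X}\widetilde{\rho_Y} = 0$ by Lemma \ref{conjugating_by_sqrt_distace}. The only real obstacle is bookkeeping — keeping straight the original versus shrunken flows and distances, and the regions on which each previous lemma applies; once the inclusion $\widetilde{T_X} \subset \{\rho_X < \nicefrac{1}{2}\}$ is in hand, every assertion reduces to a direct substitution.
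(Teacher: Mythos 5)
Your proof is correct and takes essentially the same approach as the paper: both establish $\widetilde{T_X} \subset \{\rho_X < \nicefrac{1}{2}\}$ (so Lemmas \ref{conjugating_by_sqrt_vector} and \ref{conjugating_by_sqrt_distace} apply on all of $\widetilde{T_X}\cap\widetilde{T_Y}$), verify pre-commutativity via the same substitution into Equation (\ref{edited_euler_flow}) together with Lemma \ref{shrinking_distance_level_sets}, and then deduce the two commutativity identities by the Leibniz rule applied to $\widetilde{V_X} = (h\circ\rho_X)\cdot V_X$, using $\L_{\widetilde{V_Y}}\rho_X = 0$ to kill the extra derivative term.
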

\begin{proof}
First we show $\widetilde{T_X},\widetilde{T_Y}$ pre-commute. 
For all $t\in \R$ and $v\in T_X\cap \widetilde{T_Y}$ we have
\begin{align*}
& \left(\rho_X \circ \widetilde{m}_Y^{\exp(t)} \right)(v) \\
=& \left( \rho_X \circ m_X^{(f\circ \rho_X)(v)} \circ m_Y^{\exp(t)} \circ  m_X^{{(f\circ \rho_X)}^{-1}(v)}\right)(v) \\
=& {(f\circ \rho_X)(v)}^2 \cdot \left( \rho_X \circ m_Y^{\exp(t)} \circ  m_X^{{(f\circ \rho_X)}^{-1}(v)}\right)(v) \\
=& {(f\circ \rho_X)(v)}^2 \cdot \left( \rho_X \circ  m_X^{{(f\circ \rho_X)}^{-1}(v)}\right)(v) \\
=& \rho_X(v)
\end{align*}
and 
\begin{align*}
& \left(m_X^0 \circ \widetilde{m}_Y^{\exp(t)} \right)(v) \\
=& \left( m_X^0 \circ m_X^{(f\circ \rho_X)(v)} \circ m_Y^{\exp(t)} \circ  m_X^{{(f\circ \rho_X)}^{-1}(v)}\right)(v) \\
=& \left( m_X^0 \circ m_Y^{\exp(t)} \circ  m_X^{{(f\circ \rho_X)}^{-1}(v)}\right)(v) \\
=& \left( m_X^0 \circ  m_X^{{(f\circ \rho_X)}^{-1}(v)}\right)(v) \\
=& m_X^0(v).
\end{align*}

Now Lemma \ref{shrinking_distance_level_sets} implies that level sets of $(\widetilde{m}_X^0, \widetilde{\rho_X})$ are contained in those of $(m_X^0, \rho_X)$ so $\widetilde{T_X},\widetilde{T_Y}$ pre-commute.

Now let $v\in \widetilde{T_Y}\cap \widetilde{T_X} \subset  \widetilde{T_Y}\cap \{\rho_X < 1/2\}$. By Lemma \ref{conjugating_by_sqrt_vector} we know that
\[
[V_X,\widetilde{V_Y}](v) =0
\]
but $\widetilde{V_X}= (h\circ \rho_X)\cdot V_X$ so
\begin{align*}
& [\widetilde{V_Y}, \widetilde{V_X}](v) \\
=& [\widetilde{V_Y}, (h\circ \rho_X)\cdot V_X](v) \\
=& \underbrace{\L_{\widetilde{V_Y}}{(h\circ \rho_X)}(v)}_0\cdot V_X(v) + (h\circ \rho_X)(v) \cdot \underbrace{[\widetilde{V_Y}, V_X](v)}_0 \\
=& 0.
\end{align*}

Finally, Lemma \ref{conjugating_by_sqrt_distace} implies
\[
\L_{V_X}\widetilde{\rho_Y}(v) = 0
\]
so that
\begin{align*}
& \L_{\widetilde{V_X}}\widetilde{\rho_Y}(v) \\
=& (h\circ \rho_X)(v)\cdot \L_{V_X}\widetilde{\rho_Y}(v) \\
=& 0.
\end{align*}
\end{proof}
\begin{definition}
With the notation and assumptions of \ref{conjugating_by_sqrt_distace}, $\left(\widetilde{V_Y}, \widetilde{T_Y}, \widetilde{m}_Y^t, \widetilde{\rho_Y} \right)$ is the conjugation of $T_Y$ by $T_X$.
\end{definition}
\begin{rem}\label{conjugating_same_outside}
On $M_d\setminus T_X$ the vector field $\widetilde{V_Y}$ and the maps $\widetilde{m}_X^t,\widetilde{\rho_Y}$ are the same as $V_Y$ and $m_Y^t,\rho_Y$.
\end{rem}
\subsection{Proof of Theorem \ref{commutative_control_data_theorem}}
\hfill

We modify the given collection using descending induction on $d$, starting from the highest dimension of a stratum $n_0$.

\underline{Assumption for construction step $d$}:
There exists a collection ${\{\left(V_X, T_X, m_X^t, \rho_X \right)\}}_{X\in \S}$ of convenient tubular neighbourhoods of strata with the following properties:
\begin{enumerate}
    \item $G$-equivariant.
    \item\label{in_proof_2_adjusted} Adjusted with regard to $\S$.
    \item Tangential with regard to $\S$.
    \item Pre-commutative.
    \item For $X,Y\in\S$ with $\dim{X},\dim{Y} > d$, the tubular neighbourhoods $T_X,T_Y$ commute.
\end{enumerate}

\underline{Construction step $n_0$}: The given collection already satisfies the induction assumption for the next step $d=n_0 - 1$ as two strata of the same dimension are not comparable and therefore their tubular neighbourhoods commute.

\underline{Construction step $d$}: For all $Y\in \S$ with $\dim{Y}>d$, let $\widetilde{T_Y}$ be the convenient tubular neighbourhood of $Y$ obtained by conjugating $T_Y$ with regard to all $X<Y$ with $\dim{X}=d$.

Note that for a given $Y$ of dimension $>d$, Property (\ref{in_proof_2_adjusted}) implies that ${\{T_Y\cap T_X\}}_{\dim{X}=d}$ are disjoint so by Remark \ref{conjugating_same_outside} this modification does not depend on the order of these conjugations.

For all $X\in\S$ with $\dim{X}=d$, denote by $\widetilde{T_X}$ the shrinking of $T_X$ by $h\coloneqq h_{\nicefrac{1}{4},\nicefrac{1}{2}}$. For $Y\in S$ with $\dim{Y}<d$ denote $\widetilde{T_Y}=T_Y$. We claim that the collection
\[
{\Big\{\left(\widetilde{V_Y}, \widetilde{T_Y}, \widetilde{m}_Y^t, \widetilde{\rho_Y} \right)\Big\}}_{Y\in \S}
\]
satisfies the induction assumption for step $d-1$.

\underline{$G$-equivariant}: Let $Y\in\S$. If $\dim{Y}<d$ then this is given by the induction assumption. 
If $\dim{Y}=d$ this follows from the fact that shrinking tubular neighbourhoods preserves $G$-equivariance. 
If $\dim{Y}>d$ this follows from Equations (\ref{edited_euler_flow}), (\ref{edited_distance}) and the induction assumption. 

\underline{Adjusted with regard to $\S$}: \ref{ad4} is implies by the fact all the tubular neighbourhoods are convenient and Proposition \ref{distance_close}.

To prove that \ref{ad1},\ref{ad2} and \ref{ad3} are also satisfied one can use the induction assumption, Property \ref{conv_euler_conical} and Equation (\ref{edited_euler_vector}). We will prove \ref{ad1} as an example, the rest are similar.

The interesting part is to show that $\widetilde{T_Z}\cap \widetilde{T_Y}\neq \emptyset$ implies $Z,Y$ are comparable, as the other direction follows from the definition of the order relation.

For $\dim{Z},\dim{Y}<d$ this follows from the induction assumption as the tubular neighbourhoods remain the same. For $\dim{Y}=d,\dim{Z}\leq d$ it follows from $\widetilde{T_Y}\subset T_Y$.

If $Z=X$ with $\dim{X}=d$ and $\dim{Y}>d$, let $v\in \widetilde{T_X}\cap \widetilde{T_Y}$. Then
\begin{align*}
0\neq & \widetilde{V_Y}(v) =
{Dm_X^{(f\circ \rho_X)(v)}}_{m_X^{{(f\circ \rho_X)}^{-1}(v)}(v)} \cdot V_Y\left( m_X^{{(f\circ \rho_X)}^{-1}(v)}(v)\right)
\end{align*}
and therefore
\[
V_Y\left( m_X^{{(f\circ \rho_X)}^{-1}(v)}(v)\right) \neq 0
\]
which implies that $m_X^{{(f\circ \rho_X)}^{-1}(v)}(v) \in T_Y$. But $v\in \widetilde{T_X}\subset T_X$ so $m_X^{{(f\circ \rho_X)}^{-1}(v)}(v) \in T_X$ and the result follows by the induction assumption.

If $\dim{Z},\dim{Y}>d$, let $v\in \widetilde{T_Z}\cap \widetilde{T_Y}$. If there exist no $X\in\S$ with $\dim{X}=d$ such that $v\in T_X$ then $\widetilde{V_Y}(v)=V_Y(v),\widetilde{V_Z}(v)=V_Z(v)$ and the result follows from the induction assumption.

Otherwise, let $X$ be such a stratum. Then similarly to before
\begin{align*}
& V_Y\left( m_X^{{(f\circ \rho_X)}^{-1}(v)}(v)\right) \neq 0 \\
& V_Z\left( m_X^{{(f\circ \rho_X)}^{-1}(v)}(v)\right) \neq 0
\end{align*}
which implies $m_X^{{(f\circ \rho_X)}^{-1}(v)}(v)\in T_Z\cap T_Y$ and the result follows from the induction assumption.

\underline{Tangential with regard to $\S$}: For $Y\in\S$ with $\dim{Y}\leq d$ this follows from the induction assumption and the fact that $\widetilde{V_Y}$ either remains the same or is multiplied by a function.

For $Y\in\S$ with $\dim{Y}>d$ this follows from Equation (\ref{edited_euler_vector}) and the following fact.
For $s>0$ and $X,Z\in\S$ with $\dim{X}=d$ and $X<Y<Z$, the map $m_X^s$ preserves $Z$ and therefore
\[
\text{Im}\left( Dm_X^s\lvert_{TZ} \right) \subset TZ.
\]

\underline{Pre-commutative}: Let $Y<Z$. For $\dim{Z}\leq d$ this follows from the induction assumption.

For $Y=X$ with $\dim{X}=d$ this follows from Corollary \ref{conjugating_commute}.

Assume $d<\dim{Y}<\dim{Z}$ and let $v\in \widetilde{T_Y}\cap \widetilde{T_Z}$. If there exist no $X\in\S$ with $\dim{X}=d$ such that $v\in T_X$, then $\widetilde{m}_Y^0,\widetilde{\rho_Y}$ and $\widetilde{m}_Z^t$ are the same as $m_Y^0,\rho_Y$ and $m_Z^t$ so this follows from the induction assumption. Thus we assume there exist $X\in\S$ with $\dim{X}=d$ such that $v\in T_X$.

Using $\rho_X\circ \widetilde{m}_Z^s=\rho_X$ and the induction assumption we get
\begin{align*}
& \left(\widetilde{m_Y^0} \circ \widetilde{m_Z^s} \right)(v) \\
=& \left( m_X^{(f\circ \rho_X \circ \widetilde{m_Z^s})(v)} \circ m_Y^0 \circ m_X^{{(f\circ \rho_X \circ \widetilde{m_Z^s})}^{-1}(v)} \circ \widetilde{m_Z^s}\right)(v) \\
=& \left( m_X^{(f\circ \rho_X)(v)} \circ m_Y^0 \circ m_X^{{(f\circ \rho_X)}^{-1}(v)} \circ \widetilde{m_Z^s}\right)(v) \\
=& \left( m_X^{(f\circ \rho_X)(v)} \circ m_Y^0 \circ m_X^{{(f\circ \rho_X)}^{-1}(v)} \circ m_X^{(f\circ \rho_X)(v)} \circ m_Z^s \circ m_X^{{(f\circ \rho_X)}^{-1}(v)}\right)(v) \\
=& \left( m_X^{(f\circ \rho_X)(v)} \circ m_Y^0 \circ m_X^{{(f\circ \rho_X)}^{-1}(v)}\right)(v) \\
=& \widetilde{m_Y^0}(v)
\end{align*}
and similarly for $\widetilde{\rho_Y}\circ \widetilde{m}_Z^s$.

\underline{Strata of dimension $>d-1$ commute}: Let $X<Y$ with $\dim{X}=d$. Then $\widetilde{T_X},\widetilde{T_Y}$ commute by Corollary \ref{conjugating_commute}. 

Let $Y<Z$ with both dimensions $>d$ and let $v\in \widetilde{T_Y}\cap \widetilde{T_Z}$. Again, the interesting case is when there exist $X\in\S$ with $\dim{X}=d$ such that $v\in T_X$.

With the same arguments as in the proof of pre-commutativity, now using the assumption that $T_Y,T_Z$ commute we obtain
\begin{align*}
& \left(\widetilde{m_Y^t} \circ \widetilde{m_Z^s} \right)(v) \\
=& \left( m_X^{(f\circ \rho_X \circ \widetilde{m_Z^s})(v)} \circ m_Y^t \circ m_X^{{(f\circ \rho_X \circ \widetilde{m_Z^s})}^{-1}(v)} \circ \widetilde{m_Z^s}\right)(v) \\
=& \left( m_X^{(f\circ \rho_X)(v)} \circ m_Y^t \circ m_X^{{(f\circ \rho_X)}^{-1}(v)} \circ m_X^{(f\circ \rho_X)(v)} \circ m_Z^s \circ m_X^{{(f\circ \rho_X)}^{-1}(v)}\right)(v) \\
=& \left( m_X^{(f\circ \rho_X)(v)} \circ m_Y^t \circ m_Z^s \circ m_X^{{(f\circ \rho_X)}^{-1}(v)}\right)(v) \\
=& \left( m_X^{(f\circ \rho_X)(v)} \circ m_Z^s \circ m_Y^t \circ m_X^{{(f\circ \rho_X)}^{-1}(v)}\right)(v) \\
=& \left( m_X^{(f\circ \rho_X)(v)} \circ m_Z^s \circ m_X^{{(f\circ \rho_X)}^{-1}(v)} \circ m_X^{(f\circ \rho_X)(v)} \circ m_Y^t \circ m_X^{{(f\circ \rho_X)}^{-1}(v)}\right)(v) \\
=& \left( m_X^{(f\circ \rho_X \circ \widetilde{m_Y^t})(v)} \circ m_Z^s \circ m_X^{{(f\circ \rho_X \circ \widetilde{m_Y^t})}^{-1}(v)} \circ \widetilde{m_Y^y}\right)(v) \\
=& \left(\widetilde{m_Z^s} \circ \widetilde{m_Y^t} \right)(v)
\end{align*}
and similarly for $\widetilde{\rho_Z}\circ \widetilde{m}_Y^t$.

This finishes the construction of step $d$ and therefore the proof. \qed

\section{Neighbourhood smooth weak deformation retraction}\label{weak_deformation_section}
\begin{thm}\label{weak_deformation_theorem}
Let $M$ be a manifold and let $(C,\S)$ be a stratified subset of it. Assume there exist commutative tangential control data ${\{ \left(T_X, \rho_X, m_X^t, V_X \right)\}}_{X\in\S}$.

Then there exists a smooth weak deformation retraction from the open set
\[
\bigcup_{X\in\S}\{\rho_X<1\}
\]
to $C$, which is the restriction of a smooth homotopy $F:M\times [0,1]\to M$.

If $G$ is a compact Lie group acting on $M$, the stratification $\S$ is a $G$-stratification and the control data is $G$-equivariant then the smooth homotopy $F$ can be constructed to be $G$-equivariant.
\end{thm}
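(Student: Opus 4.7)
The plan is to build $F$ as a smooth composition of the fiber-wise multiplications $m_X^{1 - t \lambda_X(p)}$ over all strata $X$, where the $\lambda_X$ are bump functions derived from the distance functions $\rho_X$. The commutativity of the tubular neighbourhoods will let this composition be independent of order on common domains, and the invariance of each $\rho_X$ along flows of comparable $V_Y$ (conditions (PC2) and (C2)) will carry over to the $\lambda_X$.

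First I would fix a bump function $h \coloneqq h_{a, b}$ with suitable thresholds $0 < a < b \leq 1$ (Notation \ref{bump_function}) and set $\lambda_X \coloneqq h \circ \rho_X$ on $T_X$, extended by zero outside. By Corollary \ref{extending_functions} together with Proposition \ref{distance_close}, this extension is smooth on all of $M$, since $\{\rho_X \leq b\}$ is closed in $M$. Then I would define
\[
F(p, t) = \left(m_{X_{i_1}}^{1 - t \lambda_{X_{i_1}}(p)} \circ \cdots \circ m_{X_{i_r}}^{1 - t \lambda_{X_{i_r}}(p)}\right)(p),
\]
where $\{X_{i_1}, \ldots, X_{i_r}\}$ is the finite set of strata with $p \in \mathrm{supp}(\lambda_X)$, ordered by ascending dimension so that the lowest-dimensional stratum's factor is outermost. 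Local finiteness of $\{T_X\}$ (from the construction of Section \ref{tangential_control_data_section}) together with closedness in $M$ of each support give finitely many non-identity factors per point and smoothness of $F$ away from the loci $\partial \mathrm{supp}(\lambda_X)$. The commutativity condition (C1), combined with the invariance of each $\lambda_Y$ along $V_X$ when $X, Y$ are comparable (from (C2) or (PC2)) and disjoint supports when they are not (by (AD1)), makes the ordering immaterial on overlapping domains.

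Finally I would verify the required properties: $F(\cdot, 0) = \operatorname{id}$ since $m_X^1 = \operatorname{id}$; and for $p \in \bigcup_X \{\rho_X \leq a\}$, which after a rescaling of the $\rho_X$ is precisely the set $\bigcup_X \{\rho_X < 1\}$, pick the smallest-dimensional $X_0$ with $\lambda_{X_0}(p) = 1$, so that $m_{X_0}^0$ sits in the outermost position at $t = 1$; by pre-commutativity (PC1) it absorbs every inner factor $m_Y^s$ with $Y > X_0$, and by tangentiality any outer factor $m_Z^s$ with $Z < X_0$ preserves $X_0 \subset C$, giving $F(p, 1) \in C$. $G$-equivariance is inherited from the $G$-equivariance of each $m_X^t$ and the $G$-invariance of each $\rho_X$. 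The main obstacle I foresee is verifying smoothness of the composition across the boundaries $\partial \mathrm{supp}(\lambda_X)$, where factors transition between identity and non-identity; this is precisely where Proposition \ref{distance_close} (closedness of $\{\rho_X \leq b\}$ in $M$) is essential, together with the joint flow-invariance of the $\lambda$'s, which keeps the composition smooth across overlaps.
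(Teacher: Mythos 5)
The high-level plan --- compose fiber-wise multiplications $m_X^{1-t\lambda_X(p)}$ over strata, ordered with the lowest dimension outermost, and use commutativity to resolve ordering --- is the right idea and closely parallels the paper. But there is a genuine gap in the smoothness of $F$ across lower strata, and the appeal to Proposition~\ref{distance_close} does not close it.

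The function $\rho_X$ for a stratum $X$ of dimension $d$ is defined on $T_X \subset M_d = M \setminus C_{<d}$, and what Proposition~\ref{distance_close} and Corollary~\ref{extending_functions} give you is that the extension of $h\circ\rho_X$ by zero is smooth on $M_d$, \emph{not} on $M$. If $z \in Z$ with $Z < X$, then $z \in \overline{X}$, so points of $X$ (where $\rho_X = 0$, hence $\lambda_X = 1$) accumulate at $z$, while your extension sets $\lambda_X(z) = 0$. So $\lambda_X$ as you define it is not even continuous across $C_{<d}$, and your $F$ fails to be smooth there. The paper's fix is precisely what distinguishes its $\varphi_d$ from your $\lambda_X$: the factor $\prod_{i<d}\bigl(1-(h_{1,2}\circ\rho_i)\bigr)$ forces $\varphi_d$ to vanish identically on the open neighbourhood $\bigcup_{i<d}\{\rho_i<1\}$ of $C_{<d}$, which is what makes the extension by zero smooth on all of $M$ and makes each $f_d^t$ the identity near $C_{<d}$. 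Without some such cutoff --- tied to distances from \emph{all} lower strata, not just the one being contracted --- the composition cannot be smooth. Your remark that Proposition~\ref{distance_close} plus flow-invariance of the $\lambda$'s handles the boundary is exactly where the argument breaks down, because neither gives control near $C_{<d}$.

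A second, less critical, difference: you run all factors simultaneously in $t$, evaluating every $\lambda_X$ at the original $p$, whereas the paper concatenates the homotopies $f_{n_0}, f_{n_0-1}, \ldots, f_0$ in time. At $t=1$ the two agree, because property~(\ref{fd_preserves_distances}) of Lemma~\ref{fd_propeties} (a consequence of commutativity) ensures every $\rho_i$ is preserved along each $f_j^t$ with $j\neq i$, so the cutoff functions are indeed constant along the flow. But this invariance is what makes your ``evaluate at $p$'' formula coincide with the honest composition; it is worth stating explicitly rather than leaving implicit. Grouping strata by dimension as the paper does (using (AD$_3$) so that $m_d^t, \rho_d$ are well-defined on the disjoint union $T_d$) is equivalent to your per-stratum ordering and is not a gap.
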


By \ref{ad3}, the tubular neighbourhoods $T_X, T_Y$ are disjoint for $X,Y\in\S$ of the same dimension. Denote 
\begin{align*}
C_d &= \bigsqcup_{\dim{X}=d}X \\
T_d &= \bigsqcup_{\dim{X}=d}T_X \subset M_d
\end{align*}
and let $\rho_d,m_d^t$ be the functions defined on $T_d$ such that for each $X\in\S$ with $\dim{X}=d$ we have
\[
\rho_d\lvert_{T_X} = \rho_X,\:m_d^t\lvert_{T_X} = m_X^t.
\]

Recall Notation \ref{bump_function}.
\begin{prop}
Define $\varphi_d:M_d\to [0,1]$ by
\[
\varphi_d(v) = (h_{2,3}\circ \rho_d)(v) \cdot \prod_{0 \leq i <d}\left(1 - (h_{1,2}\circ \rho_i)(v) \right).
\]

Then $\varphi_d$ can be smoothly extended to $M$ by setting it to $0$ on $M\setminus M_d = C_{<d}$.
\end{prop}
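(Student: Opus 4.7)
The plan is to establish smoothness of the extended $\varphi_d$ on the open cover $\{M_d,\, M\setminus \overline{C_{<d}}\cup \text{nbhd of }C_{<d}\}$; more concretely, to show that around every point of $C_{<d}$ there is an open neighbourhood in $M$ on which the extended $\varphi_d$ is identically $0$.

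First I would argue that $\varphi_d$, as written, is already a well-defined smooth function on $M_d$. The factor $h_{2,3}\circ \rho_d$ is a priori defined only on $T_d$, but by \ref{ad4} (equivalently Proposition \ref{distance_close}), for each stratum $X$ of dimension $d$ the set $\rho_X^{-1}([0,3])$ is closed in $M_d$, so Corollary \ref{extending_functions} lets us extend $h_{2,3}\circ \rho_X$ by $0$ outside $\rho_X^{-1}([0,3])$; property \ref{ad3} (disjointness of tubes of the same dimension in $M_d$) ensures these local extensions glue to a smooth function on all of $M_d$. The same argument applied to $h_{1,2}\circ \rho_i$ on $T_i$ shows that $1-h_{1,2}\circ \rho_i$ extends smoothly to $M_d$ by the constant $1$ outside $T_i$. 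Thus $\varphi_d \in C^\infty(M_d)$.

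The key step is the extension across $C_{<d}$. Let $p\in C_{<d}$. Then $p$ lies in some stratum $Y$ with $\dim Y = i < d$, and in particular $p$ belongs to the zero section of the tubular neighbourhood $T_Y$, so $\rho_Y(p)=0$. Since $T_Y$ is open in $M_i$ and $M_i = M\setminus C_{<i}$ is open in $M$, the set $T_Y$ is open in $M$; by continuity of $\rho_Y$ there is an open neighbourhood $V\subset T_Y$ of $p$ in $M$ on which $\rho_Y < 1$. On $V\cap M_d$ the function $\rho_i$ is defined and equals $\rho_Y$, so $h_{1,2}\circ \rho_i \equiv 1$ there, making the factor $(1-h_{1,2}\circ \rho_i)$ identically $0$ on $V\cap M_d$. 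Hence $\varphi_d \equiv 0$ on $V\cap M_d$, while the proposed extension sets $\varphi_d\equiv 0$ on $V\cap C_{<d}$; together the extension is $0$ on the whole open set $V$, and smoothness at $p$ follows.

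Combining both steps, the extension is smooth on the open cover $\{M_d\} \cup \{V_p : p\in C_{<d}\}$ of $M$, so it is smooth on $M$. There is no genuine obstacle here; the only points requiring care are confirming that $V$ is open in $M$ (not merely in $M_i$), which uses that $M_i\supset M_d$ is open in $M$ because $C_{<i}$ is closed in $M$, and using \ref{ad3} and Corollary \ref{extending_functions} to glue the various local $\rho_X$-dependent factors over strata of the same dimension.
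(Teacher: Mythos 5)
Your proof is correct and follows essentially the same approach as the paper: verify smoothness on $M_d$ via \ref{ad4} and Corollary~\ref{extending_functions}, then show the extension vanishes on an open neighbourhood of $C_{<d}$ because some factor $1-h_{1,2}\circ\rho_i$ vanishes where $\rho_i<1$. The paper states this more tersely by directly exhibiting the global open set $\bigcup_{i<d}\{\rho_i<1\}$, whereas you work pointwise around each $p\in C_{<d}$; these are the same argument.
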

\begin{proof}
First, by \ref{ad4} and Corollary \ref{extending_functions} the above extension is well defined and smooth on $M_d$. By definition it is $\equiv 0$ on
\[
\bigcup_{i<d}\{\rho_i < 1\}
\]
which is a neighbourhood of $C_{<d}$.
\end{proof}
\begin{prop}\label{def_of_fd}
Define $f_d:[0,1]\times M\to M$ by
\[
f_d(t,x) = f_d^t(x) \coloneqq 
\begin{cases}
m_d^{1-t\varphi_d(x)}(x) & x\in M_d \\
x & x\in C_{<d}
\end{cases}
\]

Then $f_d$ is smooth.
\end{prop}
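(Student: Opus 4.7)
The plan is to exhibit $f_d$ as smooth by covering $M$ with two open sets on which smoothness is immediate and which agree on the overlap. Concretely, I would write $M = M_d \cup V$ where $V$ is an open neighborhood of $C_{<d}$ in $M$ on which $\varphi_d$ vanishes identically; on $M_d$ the given formula for $f_d^t$ is smooth, and on $V$ the map $f_d^t$ reduces to the identity.

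First I would verify smoothness of $f_d$ on $[0,1]\times M_d$. The expression $m_d^{1-t\varphi_d(x)}(x)$ is smooth on $[0,1]\times T_d$ since $m_d^s(x)$ is smooth in $(s,x)\in\R_{\geq 0}\times T_d$ and $1-t\varphi_d(x)\in[0,1]$. To extend to all of $[0,1]\times M_d$, observe that $\mathrm{supp}\,\varphi_d\subset \{\rho_d\leq 3\}\subset T_d$; by \ref{ad4} together with the local finiteness of the collection $\{T_X\}_{\dim X = d}$ (coming from Lemma \ref{good_neighbourhood}, using \ref{ad3}), the set $\{\rho_d\leq 3\}$ is closed in $M_d$. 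Hence $\{T_d,\,M_d\setminus\mathrm{supp}\,\varphi_d\}$ is an open cover of $M_d$, and on the overlap $T_d\setminus\mathrm{supp}\,\varphi_d$ one has $\varphi_d\equiv 0$, so the formula yields $m_d^1(x)=x$, matching the identity extension off $\mathrm{supp}\,\varphi_d$. Patching gives a smooth map on $[0,1]\times M_d$.

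Next I would produce $V$. Define
\[
V := \bigcup_{0\leq i<d}\rho_i^{-1}\bigl([0,1)\bigr)\subset M.
\]
Each $\rho_i^{-1}([0,1))$ is open in $T_i$, hence in $M_i$, and $M_i=M\setminus C_{<i}$ is open in $M$ because $C_{<i}$ is closed in $M$ (it is a locally finite union of closures of strata of dimension less than $i$, using the condition of the frontier). So $V$ is open in $M$. For any $p\in C_{<d}$ lying in a stratum $X$ of dimension $i<d$, one has $p\in X\subset T_X\subset T_i$ with $\rho_i(p)=0<1$, so $p\in V$; thus $C_{<d}\subset V$. Since $\varphi_d$ carries the factor $(1-h_{1,2}\circ\rho_i)$ which vanishes on $\{\rho_i<1\}$, we get $\varphi_d\equiv 0$ on $V$, and therefore $f_d^t\equiv\mathrm{id}$ on $[0,1]\times V$ via either branch of the definition.

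Finally $M_d\cup V=M$, since $M\setminus M_d=C_{<d}\subset V$; the two local definitions of $f_d$ agree on the overlap $M_d\cap V$ (both equal the identity there), so smoothness of $f_d$ on $[0,1]\times M$ follows. The one genuinely delicate point is the second step: one needs an \emph{open} neighborhood of $C_{<d}$ in $M$ on which $\varphi_d$ vanishes, not merely pointwise vanishing along $C_{<d}$, and this is precisely what the product structure of $\varphi_d$ over the lower-dimensional strata buys us.
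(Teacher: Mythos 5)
Your proof is correct and follows essentially the same route as the paper: the key observation in both is that $f_d^t$ is smooth on $[0,1]\times M_d$ (since $\varphi_d$ is supported in $\{\rho_d\leq 3\}\subset T_d$) and reduces to the identity on $[0,1]\times\bigcup_{i<d}\{\rho_i<1\}$, an open neighbourhood of $[0,1]\times C_{<d}$. You merely spell out the patching argument that the paper compresses into ``one can check.''
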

\begin{proof}
$f_d$ is smooth on $[0,1]\times M_d$ by the definition of $\varphi_d$. One can check that it is the projection to $M$ on
\[
[0,1] \times \bigcup_{i<d}\{\rho_i < 1\}
\]
which is a neighbourhood of $[0,1] \times C_{<d}$.
\end{proof}

From now on, denote
\[
\{ \rho_i \geq a\} = M_i \setminus \{ \rho_i < a\}.
\]
It is a closed set of $M_i$.
\begin{lem}\label{fd_propeties}
Let $f_d:[0,1]\times M\to M$ be defined as in Proposition \ref{def_of_fd}. It has the following properties:
\begin{enumerate}
    \item $f_d^0 = \operatorname{Id}$.
    \item For all $t\in[0,1]$, the map $f_d^t$ is the identity on 
    \[
    \{\rho_d \geq 3\} \cup \left(\bigcup_{i<d}\{\rho_i \leq 1 \} \right).
    \]
    \item For all $t\in[0,1]$, the map $f_d^t$ preserves $C_{\geq d} = C\cap M_d$.
    \item\label{fd_preserves_distances} Along each curve $t\mapsto f_d^t(x)$, the function $\rho_d$ is weakly decreasing and the functions $\rho_i$ for $i\neq d$ are constant.
    \item\label{fd_retracts} $f_d^1$ takes the set
    \[
    \{\rho_d \leq 2\} \cap \left(\bigcap_{i<d}\{\rho_i \geq 2\} \right)
    \]
    to $C_d$.
\end{enumerate}

If $G$ is a compact Lie group acting on $M$, the stratification $\S$ is a $G$-stratification and the control data is $G$-equivariant then $f_d^t$ is $G$-equivariant.
\end{lem}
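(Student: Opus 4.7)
My plan is to verify each of the five properties by direct computation from the definitions of $\varphi_d$ and $f_d^t$, invoking the compatibility axioms of the commutative tangential control data at the relevant points. Property (1) is immediate: $1 - 0\cdot \varphi_d(x) = 1$ and $m_d^1$ is the identity on $T_d$, and $f_d^0(x) = x$ by definition on $C_{<d}$. For property (2), the key observation is that $\varphi_d$ vanishes on each piece of the stated set: on $\{\rho_d \geq 3\}$ the factor $h_{2,3}\circ \rho_d$ is zero, while on $\{\rho_i \leq 1\}$ for $i < d$ the factor $1 - h_{1,2}\circ \rho_i$ is zero, so $1 - t\varphi_d(x) = 1$ and $f_d^t(x) = x$. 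Property (5) is the mirror calculation: on $\{\rho_d \leq 2\}\cap \bigcap_{i<d}\{\rho_i \geq 2\}$ both factor types equal $1$, hence $\varphi_d(x) = 1$ and $f_d^1(x) = m_d^0(x) \in C_d$.

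For property (3), I restrict to $x \in T_d$ (the $C_{<d}$ case is trivial) and set $s = 1 - t\varphi_d(x) \in [0,1]$. For the unique stratum $X$ of dimension $d$ with $x \in T_X$, the tangentiality bullet tells me that $m_X^s$ preserves $X$ itself (the zero section) as well as every higher stratum $Y$ with $X < Y$, provided $s > 0$; and for $s=0$ the map $m_X^0$ lands in $X \subset C_{\geq d}$. Either way $f_d^t(x) \in C \cap M_d$. For property (4), the homogeneity $\rho_d\circ m_d^s = s^2 \cdot \rho_d$ forces $\rho_d$ to be weakly decreasing in $t$, since $s = 1 - t\varphi_d(x)$ is nonincreasing and $\varphi_d(x) \geq 0$. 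For $i \neq d$, the constancy $\rho_i \circ f_d^t = \rho_i$ follows from the compatibility axioms: for $i < d$ via the pre-commutativity relation $\rho_{X_i}\circ m_{X_d}^s = \rho_{X_i}$ valid for $s \geq 0$, and for $i > d$ via the commutativity relation $\rho_{X_i}\circ m_{X_d}^s = \rho_{X_i}$ valid for $s > 0$. The $G$-equivariance of $f_d^t$ is then inherited: $\varphi_d$ is built out of $G$-invariant functions $\rho_X$ and is therefore $G$-invariant, while each $m_d^s$ is $G$-equivariant.

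The subtlest point will be (4), where I must also check that the curve $t\mapsto f_d^t(x)$ remains in the domain where $\rho_i$ is defined in order for the constancy statement to make sense. For $i < d$ this follows from the implicit content of pre-commutativity: the identity $\rho_{X_i}\circ m_{X_d}^s = \rho_{X_i}$ requires $m_{X_d}^s$ to carry $T_{X_i}\cap T_{X_d}$ into $T_{X_i}$ for $s \geq 0$. For $i > d$ the curve stays in $T_{X_i}$ for all $s > 0$ by the commutativity axiom, and one either reads the claim as \emph{wherever defined} (allowing the endpoint $m_d^0(x)\in X_d$ to escape $T_{X_i}$), or notes that this escape only happens in the interior parameter range $t$ at which $\varphi_d(x)=1$ and $1-t\varphi_d(x)=0$. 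Once this bookkeeping is done, the rest of the argument is a straightforward unpacking of the definitions.
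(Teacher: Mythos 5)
Your proof is correct and follows essentially the same approach as the paper's (which is quite terse: the paper says little more than that (1), (2), (5) are direct computations with $\varphi_d$, (3) follows from tangentiality and $\mathrm{Im}(m_d^0)\subset C_d$, and (4) ``follows from commutativity''). Your write-up fills in the details the paper omits, correctly separating the three mechanisms behind (4): homogeneity of $\rho_d$, pre-commutativity for $\rho_i$ with $i<d$, and commutativity for $\rho_i$ with $i>d$.

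One small point worth tightening: your claim that ``the curve stays in $T_{X_i}$ for all $s>0$ by the commutativity axiom'' needs a little more than commutativity alone. The clean argument is that $\rho_{X_i}$ is constant (equal to $\rho_{X_i}(x)$) along the portion of the curve inside $T_{X_i}$, and by condition~\ref{ad4} the sublevel set $\rho_{X_i}^{-1}([0,\rho_{X_i}(x)])$ is closed in $M_i$; since the curve $s\mapsto m^s_{X_d}(x)$ stays in $M_i$ for all $s>0$, it cannot exit $T_{X_i}$ for $s>0$. Also, the parameter $s=1-t\varphi_d(x)$ hits $0$ only at the endpoint $t=1$ (and only when $\varphi_d(x)=1$), not in the interior as your last sentence suggests; this does not affect the conclusion.
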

\begin{proof}\hfill
\begin{enumerate}
    \item On $M_d$, the map $f_d^0=m_d^1$ is the identity. It is also the identity on $C_{<d}$ by definition.
    \item $\varphi_d$ is $0$ on this set and therefore $f_d^0 = m_d^1$ is the identity.
    \item Follows from the assumption that the collection is tangential and $\text{Im}\left(m_d^0\right)\subset C_d$.
    \item Follows from commutativity.
    \item $\varphi_d\equiv 1$ on this set, and so $f_d^1$ restricted to this set is $m_d^0$.
\end{enumerate}

The equivariant part follows from the definition of $f_d$.
\end{proof}
Let 
\[
n_0 \coloneqq \max_{X\in\S}{\{\dim{X}\}}.
\]
\begin{cor}\label{fd_preserves_nbhd}
The map $f_d^t$ takes
\[
\bigcup_{1\leq i \leq n_0}\{\rho_i<1\}
\]
to itself.
\end{cor}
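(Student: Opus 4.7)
The plan is to fix $x \in \{\rho_i < 1\}$ for some $1 \leq i \leq n_0$ and track $f_d^t(x)$ case by case according to whether $i < d$, $i = d$, or $i > d$. The first two cases are immediate; the third has one subtle moment which is the main obstacle.

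For $i < d$: since $\rho_i(x) < 1$, the bump $h_{1,2}$ evaluates to $1$ at $\rho_i(x)$, so the factor $(1 - (h_{1,2}\circ\rho_i)(x))$ in the definition of $\varphi_d$ vanishes. Hence $\varphi_d(x) = 0$ and $f_d^t(x) = x$, which remains in $\{\rho_i < 1\}$. For $i = d$: the point $x$ lies in $T_d$, we have $1 - t\varphi_d(x) \in [0,1]$, and homogeneity of the distance function gives
\[
\rho_d(f_d^t(x)) = (1 - t\varphi_d(x))^2 \rho_d(x) \leq \rho_d(x) < 1,
\]
so $f_d^t(x) \in \{\rho_d < 1\}$.

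For $i > d$: if $x \notin T_d$, then $\varphi_d(x) = 0$ (by the extension convention of Corollary \ref{extending_functions}) and $f_d^t(x) = x$. Otherwise $x \in T_i \cap T_d$, and by \ref{ad1} the underlying strata $X_i, X_d$ are comparable; the dimension count forces $X_d < X_i$. The commutativity identity $\rho_{X_i} \circ m_{X_d}^s = \rho_{X_i}$ for $s > 0$, which is the content of Lemma \ref{fd_propeties}(\ref{fd_preserves_distances}) in this case, then yields $\rho_i(f_d^t(x)) = \rho_i(x) < 1$ for every $t$ with $t\varphi_d(x) < 1$.

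The one point I would handle carefully is the boundary $t\varphi_d(x) = 1$ in this last case: here $f_d^t(x) = m_d^0(x) \in X_d$, and by \ref{ad2} we have $X_d \cap T_{X_i} = \emptyset$, so the image leaves the $T_i$ in which it started and $\rho_i$ ceases to be defined. It is not lost, however, because $m_d^0(x) \in X_d \subset \{\rho_d = 0\} \subset \{\rho_d < 1\}$, so the trajectory lands in a different summand of the union. This jump between summands is the only real obstacle; everything else follows directly from Lemma \ref{fd_propeties}(\ref{fd_preserves_distances}) and the homogeneity of $\rho_d$.
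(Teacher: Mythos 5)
Your argument is correct and amounts to the same approach as the paper, which proves this corollary by the single line ``Follows from Property (\ref{fd_preserves_distances})''; you are spelling out what that property gives case by case. You have also correctly spotted the point that the terse proof leaves implicit: when $t\varphi_d(x)=1$, the image $m_d^0(x)$ leaves $T_{X_i}$ (indeed $T_{X_i}\cap X_d=\emptyset$ by \ref{ad2}), so the constancy of $\rho_i$ becomes vacuous there, but the image lands on $X_d\subset\{\rho_d<1\}$ and so stays in the union. The one remaining tacit step, which your write-up shares with the paper, is that for $s=1-t\varphi_d(x)\in(0,1]$ the curve $m_{X_d}^s(x)$ actually stays inside $T_{X_i}$, which is what makes the commutativity identity applicable rather than vacuous at those times; this can be closed by a short connectedness argument: the set $S=\{s\in(0,1]:\,m_{X_d}^s(x)\in T_{X_i}\}$ is open, contains $1$, and is closed in $(0,1]$ because tangentiality keeps $m_{X_d}^s(x)$ in $M_i$ for $s>0$ while \ref{ad4} makes $\{\rho_{X_i}\leq\rho_{X_i}(x)\}$ closed in $M_i$.
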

\begin{proof}
Follows from Property (\ref{fd_preserves_distances}).
\end{proof}
\begin{proof}[Proof of Theorem \ref{weak_deformation_theorem}]
We proceed by descending induction on $d$.

\underline{Induction assumption for step $d$}: There exists a smooth map
\[
F_{d+1}:[0,1]\times M\to M
\]
inducing a family of smooth maps $F_{d+1}^t:M\to M$ such that:
\begin{enumerate}
    \item The map $F_{d+1}^t$ is $G$-equivariant.
    \item For all $t\in[0,1]$, the map $F_{d+1}^t$ is the identity on $\bigcup_{i<d+1}\{\rho_i<1\}$.
    \item For all $t\in[0,1]$, the map $F_{d+1}^t$ takes $C$ to itself.
    \item Along each curve $t\mapsto F_{d+1}^t(x)$, the functions $\rho_i$ for $i< d+1$ are constant.
    \item For all $t\in[0,1]$, the map $F_{d+1}^t$ takes \[
    \bigcup_{1\leq i \leq n_0}\{\rho_i<1\}
    \]
    to itself.
    \item\label{retract_to_cd} The map $F_{d+1}^1$ takes
    \[
    \left( \bigcap_{i<d+1}\{\rho_i \geq 2\}\right) \cap \left( \bigcup_{i\geq d+1}\{\rho_i \leq 1\} \right)
    \]
    to $C_{\geq d+1}$
\end{enumerate}

\underline{Induction step $n_0$}: Take $F_{n_0}^t = f_{n_0}^t$. Lemma \ref{fd_propeties} and Corollary \ref{fd_preserves_nbhd} imply that it satisfies the induction assumption for the next step.

\underline{Induction step $d$ --- construction of $F_d^t$}:
Let $h':[0,1]\rightarrow[0,1]$ be a smooth monotone function that is $\equiv0$ in a neighbourhood of $0$ and $\equiv 1$ in a neighbourhood of $1$. Define
\[
F_d^t := \begin{cases}
F_{d+1}^{h'(2t)} & t\leq 1/2 \\
f_d^{h'(2t-1)}\circ F_{d+1}^1 & t\geq 1/2
\end{cases}
\]
which is smooth. We claim it satisfies the induction assumption for the next step. 

Indeed, the first 5 properties follow by the induction assumption, Lemma \ref{fd_propeties} and Corollary \ref{fd_preserves_nbhd}.
For Property (\ref{retract_to_cd}), let
\[
x\in \left( \bigcap_{i<d}\{\rho_i \geq 2\}\right) \cap \left( \bigcup_{i\geq d}\{\rho_i \leq 1\} \right)
\]
and split to the following cases:

\textbf{Case $\rho_d(x)\geq 2$:} Then $x$ is in the set
\[
\left( \bigcap_{i<d+1}\{\rho_i \geq 2\}\right) \cap \left( \bigcup_{i\geq d+1}\{\rho_i \leq 1\} \right)
\]
so $F_{d+1}^1(x)\in C_{\geq d+1}$. Since $f^1_d$ preserves $C_{\geq d}$ we get that 
\[
F_d^1(x)=(f_d^1\circ F_{d+1}^1)(x)\in C_{\geq d}.
\]

\textbf{Case $\rho_d(x)\leq 2$:} By assumption
\[
(\rho_d\circ F_{d+1}^1)(x) = \rho_d(x) \leq 2
\]
and for all $i<d$:
\[
(\rho_i\circ F_{d+1}^1)(x) = \rho_i(x) \geq 2.
\]

Therefore
\[
F_{d+1}^1(x) \in \{\rho_d \leq 2\} \cap \left( \bigcap_{i<d}\{\rho_i \geq 2\} \right)
\]
so by Property (\ref{fd_retracts}) of $f_d$ we get
\[
F_d^1(x) = (f_d^1\circ F_{d+1}^1)(x)\in C_{\geq d}
\]
which proves the inductive construction. 

Finally, $F=F_0^t$ is the required smooth homotopy. Restricted to
\[
\bigcup_{1\leq i\leq n_0}\{\rho_i<1\}
\]
it is a $G$-equivariant smooth weak deformation retraction to $C$.
\end{proof}
\begin{rem}\label{stratified_subspace_deformation}
With the assumptions of Theorem \ref{weak_deformation_theorem}, let $(C',\S')$ be a stratified subspace (Definition \ref{stratified_subspace}) of $(C, \S)$.
It is also a stratified subset of $M$, and one readily checks that the collection of tubular neighbourhoods ${\{ \left(T_X, \rho_X, m_X^t, V_X \right)\}}_{X\in\S'}$ satisfies the same properties that the original one does. 

Applying Theorem \ref{weak_deformation_theorem} for $C'\subset M$
and restricting the homotopy to $C$ yields a neighbourhood of $C'$ in $C$  admitting a smooth weak deformation retraction to $C'$, which is $G$-equivariant in the $G$-equivariant case. Here by smooth on $C$ we use the smooth structure as an embedded subspace of $M$ as in \cite[Section~1.3]{pflaum_book}.

It follows that Theorem \ref{weak_deformation_theorem} can be seen as a smooth version of \cite[Theorem~1.1]{pflaum2017equivariant}. Note that, to achieve smooth maps, we make stronger assumptions and we obtain a weak deformation retraction instead of a strong one.
\end{rem}

\section{Stratification by orbit types}\label{orbit_type_section}

Let $G$ be a compact Lie group, not necessarily connected, acting on a manifold $M$. For $x\in M$ denote by $G_x\subset G$ its stabilizer, and write $H\sim K$ when $H,K$ are conjugate in $G$. Define
\[
M_{(H)} = \{x\in M:\: G_x\sim H\}.
\]

Note that connected components of $M_{(H)}$ are manifolds but may have different dimensions for the same $H\subset G$. This implies that this decomposition of $M$ is too coarse to be a stratification.
\begin{definition}\label{orbit_type_con_comp}
The stratification of $M$ by connected components of orbit types is:
\[
\S_{\text{con}} = \{X:\: X\text{ is a connected component of }M_{(H)}\text{ for }H\subset G\}
\]
\end{definition}
This decomposition is a stratification of $M$ (see \cite{duistermaatkolk} for example) but it is too fine for our purposes --- the strata are not necessarily $G$-invariant in the case $G$ is not connected.

We now present two different refinements of $\{M_{(H)}\}$ that are $G$-stratifications of $M$. We begin with the finer one:
\begin{definition}\label{stratification_by_orbit_type_def}
Let $\pi:M\to M/G$ be the quotient map. Define
\[
\S_G = \{\pi^{-1}(\pi(X)):\: X\in \S_{\text{con}}\}
\]
or equivalently
\[
\S_G = \{G\cdot X:\: X\in \S_{\text{con}}\}
\]
\end{definition}
\begin{prop}
$\S_G$ is a $G$-stratification of $M$.
\end{prop}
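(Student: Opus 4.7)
The plan is to deduce the statement from the corresponding fact for $\S_{\text{con}}$, which is already noted to be a stratification. The key structural observation, which I would prove first, is that for any $X \in \S_{\text{con}}$ the identity component $G_0 \subset G$ satisfies $G_0 \cdot X = X$. This is because, for any $p \in X$, the orbit map $G_0 \to M$, $g \mapsto g \cdot p$, is continuous with connected image containing $p$, and its image lies in $M_{(G_p)}$ since $G_{gp} = g G_p g^{-1} \sim G_p$; hence this image lies in the connected component of $M_{(G_p)}$ containing $p$, which is $X$. Since $G$ is compact, $G/G_0$ is finite, so
\[
G \cdot X \;=\; \bigsqcup_{\bar g \in G/G_0} g \cdot X
\]
is a \emph{finite} disjoint union of elements of $\S_{\text{con}}$ (each $g \cdot X$ is again a connected component of $M_{(G_p)}$ since $g$ acts by a diffeomorphism of $M$ preserving $M_{(G_p)}$).

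With this in hand, each of the stratification axioms reduces to the corresponding property for $\S_{\text{con}}$. For the smooth manifold structure, $G \cdot X$ is a finite disjoint union of embedded submanifolds of $M$, all of the same dimension (as translates of $X$), hence is itself an embedded submanifold. Local closedness follows because $G \cdot X$ is a union of connected components of the locally closed subset $M_{(H)}$ (where $H = G_p$). Local finiteness of $\S_G$ follows from local finiteness of $\S_{\text{con}}$: any point has a neighbourhood $U$ meeting only finitely many $X_1, \dots, X_n \in \S_{\text{con}}$, and such a $U$ can meet only the finite list $G \cdot X_1, \dots, G \cdot X_n$ of elements of $\S_G$.

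For the condition of the frontier, suppose $X, Y \in \S_G$ with $X \cap \overline{Y} \neq \emptyset$; write $X = \bigsqcup_j X_j$ and $Y = \bigsqcup_i Y_i$ with $X_j, Y_i \in \S_{\text{con}}$. Then some $X_j \cap \overline{Y_i} \neq \emptyset$, so the condition of the frontier for $\S_{\text{con}}$ gives $X_j \subset \overline{Y_i}$. Since $Y$ is $G$-invariant, so is its closure $\overline{Y}$, and therefore
\[
X \;=\; G \cdot X_j \;\subset\; G \cdot \overline{Y_i} \;\subset\; G \cdot \overline{Y} \;=\; \overline{Y}.
\]
Finally, $G$-invariance of each stratum is immediate from the definition $G \cdot X$.

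The argument is essentially bookkeeping once the identity $G_0 \cdot X = X$ is established; the only real content is this reduction to a finite union, which hinges on compactness of $G$ (so $|G/G_0| < \infty$) and on the connectedness of $G_0$. There is no serious obstacle beyond being careful that a finite union of connected components of a locally closed submanifold of fixed dimension is itself a locally closed submanifold.
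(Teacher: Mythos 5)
Your proof is correct and follows the same overall strategy as the paper---reduce everything to the corresponding property of $\S_{\text{con}}$---but you fill in two details the paper glosses over. First, the paper asserts that local finiteness and the manifold structure ``follow from the same properties of $\S_{\text{con}}$'' without comment; your lemma that $G_0 \cdot X = X$, so that $G\cdot X$ is a finite union of translates of $X$ (all connected components of $M_{(H)}$ of the same dimension), is exactly what makes that assertion rigorous, and it is a clean use of compactness of $G$. One small imprecision: the displayed $\bigsqcup_{\bar g \in G/G_0} g\cdot X$ is not literally indexed disjointly by $G/G_0$, since distinct cosets can give the same translate; what you get is a finite union of components, each appearing once, which is all you need. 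Second, for the condition of the frontier the paper uses $\overline{G\cdot Y} = G\cdot\overline{Y}$, which relies on the observation that $G\cdot A$ is closed whenever $A$ is (compactness again), whereas you simply note that $\overline{Y}$ is $G$-invariant because $Y$ is, and deduce $G\cdot X_j \subset G\cdot\overline{Y} = \overline{Y}$ directly; this is a slightly more elementary route to the same conclusion. Both arguments are valid; yours makes the compactness hypothesis do its work in one visible place rather than two.
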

\begin{proof}
Local finiteness and fixed dimension of strata follow from the same properties of $\S_{\text{con}}$.

To prove the condition of the frontier, first note that for each closed set $A\subset M$, the set $G\cdot A$ is closed. Now let $G\cdot X, G\cdot Y\in \S_G$ where $X,Y\in \S_{\text{con}}$ and assume $G\cdot X \cap \overline{G\cdot Y}\neq \emptyset$. We may assume $X\cap \overline{Y} \neq \emptyset$.
By the condition of the frontier of $\S_{\text{con}}$ we have $X\subset \overline{Y}$ which implies
\[
G\cdot X \subset G \cdot \overline{Y} = \overline{G \cdot Y}
\]
as required.
\end{proof}

The second refinement of $\{M_{(H)}\}$ is the stratification by \textit{normal orbit types}. We will not give a full definition, but the main idea is to take into consideration the \textit{normal representation} of a point --- the representation of the stabilizer of the point on the normal slice of it. Two points have the same \textit{normal orbit type} if their stabilizers are conjugate and these representations are isomorphic in a suitable sense, see \cite[Section~1]{strat_normal_orbit_type} for more details. 

Decomposing $M$ to the different normal orbit types induces a $G$-stratification of $M$, which is in general coarser then $\S_G$ i.e. all points in a given $X\in\S_G$ have the same normal orbit type.

For both stratifications we have:
\begin{thm}[\cite{strat_by_orbit_types_2},{\cite[Theorem~3.5.4]{wall_2016}}]
The $G$-stratification by orbit types is smoothly locally trivial.
\end{thm}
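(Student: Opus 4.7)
The plan is to apply the slice theorem of Koszul/Palais to reduce to a linear $H$-action on a vector space, and then extract the local product decomposition from the linear structure. For $p \in X$ with stabilizer $H = G_p$, the slice theorem provides a $G$-invariant open neighborhood of the orbit $G \cdot p$ that is $G$-equivariantly diffeomorphic to the associated bundle $G \times_H V$, where $V$ is the normal slice at $p$ with its natural $H$-representation. The goal is to turn this equivariant local model into a chart of the form $\R^k \times \R^{n-k}$ in which $X$ appears as $\R^k \times \{0\}$.

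To build such a chart, I would decompose $V = V^H \oplus K$ as $H$-modules, with $V^H$ the fixed-point subspace and $K$ an $H$-invariant complement, and fix an $\mathrm{Ad}(H)$-invariant complement $\mathfrak{m}$ to $\h$ in $\g$. The map
\[
(\xi, v_1, v_2) \;\longmapsto\; [\exp(\xi),\, v_1 + v_2]
\]
is a local diffeomorphism from a neighborhood of $0$ in $\mathfrak{m} \times V^H \times K \cong \R^k \times \R^{n-k}$ (with $k = \dim \mathfrak{m} + \dim V^H = \dim X$) onto a neighborhood of $[e,0]$ in $G \times_H V$. Composing with the slice-theorem diffeomorphism yields the desired local chart of $M$ near $p$.

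The key linear-algebraic fact that makes the chart work is that the stabilizer of $[\exp(\xi), v_1 + v_2]$ equals $\exp(\xi)\, H_{v_1+v_2}\, \exp(-\xi) = \exp(\xi)\, H_{v_2}\, \exp(-\xi)$, where I used $v_1 \in V^H$. Hence the orbit type depends only on $v_2 \in K$: the stratum $X$ near $p$ corresponds to $\mathfrak{m} \times V^H \times \{0\}$, while each higher stratum $Y > X$ corresponds to $\mathfrak{m} \times V^H \times C_Y$ with $C_Y \subset K \setminus \{0\}$ the locus of $v_2$ whose stabilizer falls in the appropriate conjugacy class. This is exactly the product form demanded by smooth local triviality.

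The main technical care lies in ensuring the chart respects the choice of connected component or $G$-orbit used in defining the stratification (whether $\S_{\mathrm{con}}$, $\S_G$, or the normal-orbit-type refinement): one has to verify that the local orbit-type pieces inside the connected slice do not spuriously split, which is manageable because the slice is a contractible neighborhood and the relevant $C_Y$ are determined by the (finite) lattice of subgroups appearing as stabilizers in $K$. As a happy byproduct, since the $H$-action on $K$ is linear we have $H_{tv_2} = H_{v_2}$ for all $t > 0$, so each $C_Y$ is automatically invariant under positive scalar multiplication; this upgrades the conclusion to smooth local triviality with conical fibers, the stronger property required throughout the rest of the paper.
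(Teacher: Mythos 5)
The paper itself does not prove this theorem; it cites \cite{strat_by_orbit_types_2} and Wall's Theorem 3.5.4, and the subsequent remark only notes that the fibers are conical because the local model of \cite[Theorem~3.3.5]{wall_2016} reduces to the orbit-type stratification of a vector space. Your reconstruction --- slice theorem, local chart $(\xi,v_1,v_2)\mapsto[\exp\xi,\,v_1+v_2]$ on $\mathfrak{m}\times V^H\times K$, and the identity $H_{v_1+v_2}=H_{v_2}$ --- is precisely the argument behind those references, and your conical-fiber observation $H_{tv_2}=H_{v_2}$ is exactly the point of the paper's remark. One small sharpening worth making: the reason the product structure survives the passage from the coarse orbit-type decomposition to $\S_{\mathrm{con}}$ or $\S_G$ is not really contractibility of the slice but rather that the sheet $\mathfrak{m}\times V^H\times\{v_2\}$ is \emph{connected} and contained in a single orbit-type piece $M_{(H_{v_2})}$, hence lies in a single connected component and therefore in a single refined stratum; similarly, identifying $X\cap U_p$ with $\mathfrak{m}\times V^H\times\{0\}$ uses that a compact subgroup of $G$ cannot be $G$-conjugate to a proper subgroup of itself, forcing $H_{v_2}\sim H$ to imply $v_2\in K^H=\{0\}$. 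Neither point is a gap; they are the implicit linear-algebra/topology inputs that complete your outline.
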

\begin{rem}
Looking at the proofs given in \cite{strat_by_orbit_types_2} and \cite[Theorem~3.5.4]{wall_2016} one can see that the fibers are conical --- this follows from the local model \cite[Theorem~3.3.5]{wall_2016} and the fact that strata of the orbit type stratification of a vector space are conical. 
\end{rem}

\subsection{Critical set of the norm square of the momentum map}\label{crit_set_subsection}
\hfill

Let $\mu:M\to \g^*$ be a momentum map for the action of a compact Lie group $G$ on a symplectic manifold $M$. Fix an $\text{Ad}$-invariant inner product on $\g$ and fix the induced ${\text{Ad}}^*$-invariant inner product on $\g^*$. 

Arms, Marsden and Moncrief \cite{cone_like_no_local_nromal_form} gave a proof for a local structure of $p\in\mu^{-1}(0)$ as a product of a disk and a cone. Holm and Karshon \cite[Section~7]{morse_bott_kirwan_is_local} gave a similar result for $\text{Crit}\norm{\mu}^2$ using the local normal form by Guillemin--Sternberg \cite{guillemin1984normal} and Marle \cite{marle1985modele}. 

Given these results, it is natural to ask whether intersecting these sets with strata of the orbit type stratification $\S_G$ induces a $G$-stratification that is smoothly locally trivial with conical fibers. This is indeed the case:

\begin{thm}\label{crit_set_stratification_thm}
The decomposition
\[
S\coloneqq\{\text{Crit}\norm{\mu}^2\cap X:\:X\in \S_G\}
\]
is a $G$-stratification of $\text{Crit}\norm{\mu}^2$, that is smoothly locally trivial with conical fibers.
\end{thm}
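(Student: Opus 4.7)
The plan is to reduce everything to a local calculation via the Guillemin--Sternberg--Marle local normal form \cite{guillemin1984normal,marle1985modele}, combined with the explicit description of $\text{Crit}\norm{\mu}^2$ obtained by Holm--Karshon \cite[Section~7]{morse_bott_kirwan_is_local}. Fix $p\in \text{Crit}\norm{\mu}^2$ with stabilizer $H:=G_p$ and let $X_0\in \S_G$ be the stratum through $p$. The local normal form produces a $G$-equivariant symplectic diffeomorphism from a $G$-invariant neighbourhood of the orbit $G\cdot p$ in $M$ to a $G$-invariant neighbourhood of the zero section in the model $Y:=G\times_H(\h^\circ\times V)$, where $V$ is the symplectic slice at $p$ and $\h^\circ\subset \g^*$ is the annihilator of $\h$. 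In these coordinates the momentum map has an explicit form, and the $G$-orbit type through a point $[g,\eta,v]$ depends only on the $H$-orbit type of $v\in V$ (together with the obvious freedom in $\eta$).

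With this model in hand I would first verify that $\S$ is a $G$-stratification of $\text{Crit}\norm{\mu}^2$. By Holm--Karshon, in the local model $\text{Crit}\norm{\mu}^2$ corresponds to the set of triples $[g,\eta,v]$ where $v$ lies in a certain closed $H$-invariant subset $\text{Crit}_V\subset V$ that is itself a union of orbit types and is invariant under scalar multiplication by $t\in\R_{>0}$ (this is the cone-like structure of op.\ cit.); the $\eta$-coordinate is unconstrained on $\text{Crit}\norm{\mu}^2$ (up to the usual linear conditions already built into $Y$). The intersection of $\text{Crit}\norm{\mu}^2$ with a single orbit-type stratum of $\mathcal{S}_G$ therefore corresponds, in the model, to fixing an $H$-orbit type on the $V$-factor and taking $v$ in the intersection of $\text{Crit}_V$ with that orbit type; this intersection is a smooth $H$-invariant submanifold of $V$, so after passing to $G\times_H(\h^\circ\times\cdot\,)$ we obtain a smooth $G$-invariant submanifold of $Y$. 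Local finiteness and the frontier condition are inherited from $\S_G$ together with the fact that closures of the pieces of $\text{Crit}_V$ cut out by orbit types satisfy the condition of the frontier inside $V$ (the orbit type stratification of a representation is classical).

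To prove smooth local triviality with conical fibers at $p$, I would exploit the product structure of $Y$. The dimension $k$ of $X_0$ equals $\dim(G\cdot p) + \dim(\h^\circ)$ plus the dimension of the $H$-fixed part of $V$ (if $p$ has nonzero slice component), and around $p$ the model $Y$ splits smoothly as $(G\times_H \h^\circ)\times V_{(H_p^V)} \times V^\perp$, where $V_{(H_p^V)}$ is the stratum of $V$ through the slice component of $p$ and $V^\perp$ a complementary subspace. The $X_0$-factor becomes $\R^k\times\{0\}$ after choosing linear coordinates. Any nearby stratum $Y\in\S$ with $X_0<Y$ corresponds in the model to fixing an $H$-orbit type and a component of $\text{Crit}_V$; in the $V^\perp$ coordinates this is exactly a conical subset $C_Y\subset \R^{n-k}\setminus\{0\}$, by the scalar invariance of both the orbit-type decomposition of a representation and of $\text{Crit}_V$. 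Thus the product form $\R^k\times C_Y$ required by Definition \ref{locally_trivial} holds with $C_Y$ conical.

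The main obstacle I anticipate is packaging the local normal form and the Holm--Karshon description so that the resulting coordinate system at $p$ simultaneously (i) identifies $X_0$ with $\R^k\times\{0\}$, and (ii) makes every nearby critical stratum a genuine product $\R^k\times C_Y$ rather than just locally diffeomorphic to such a product. Concretely this requires choosing the slice decomposition of $V$ to be compatible with the already-chosen coordinates on the $G/H$ and $\h^\circ$ factors, and then verifying that the scaling action $t\cdot v$ on $V$ is carried through the normal-form diffeomorphism to a scaling that preserves both the orbit-type decomposition and $\text{Crit}_V$; this is a linearisation-in-the-slice-direction step that must be done carefully, but it uses only the linearity of the $H$-action on $V$ and the homogeneous (degree-two polynomial in $v$) nature of the relevant components of the momentum map in the local model.
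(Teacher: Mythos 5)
Your overall strategy is the same one the paper uses: pass to the Guillemin--Sternberg--Marle local normal form, invoke the Holm--Karshon description of $\text{Crit}\norm{\mu}^2$ in that model, and read off the conical structure from homogeneity of the quadratic momentum map on the slice. However, there is a concrete error in your description of $\text{Crit}\norm{\mu}^2$ in the local model, and it propagates to your dimension count for the stratum $X_0$.

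You assert that ``the $\eta$-coordinate is unconstrained on $\text{Crit}\norm{\mu}^2$'' and accordingly add $\dim(\h^\circ)$ into the dimension $k$ of $X_0$. This is not correct. By \cite[Lemmas~7.10 and 7.11]{morse_bott_kirwan_is_local}, in the model $G\times_H\left( V\times(\g_\beta/\h)^*\right)$ the critical set near $[1,0,0]$ is
\[
C\cap U \simeq \Bigl( G\times_H\left(C_V\times\{0\}\right)\Bigr)\cap U, \qquad C_V = Q^{-1}(0)\cap V^{T_\beta},
\]
where $T_\beta$ is the closure of the one-parameter subgroup generated by the metric dual of $\beta$. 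In particular the $(\g_\beta/\h)^*$-coordinate is forced to vanish on $\text{Crit}\norm{\mu}^2$, not left free. A quick sanity check: $\mu^{-1}(0)\subset\text{Crit}\norm{\mu}^2$, and on the zero level set the formula $\mu([g,\eta,v])=\text{Ad}^*(g)(\beta+Q(v)+\eta)$ pins $\eta$ down in terms of $v$, so $\eta$ is definitely constrained. Concretely the dimension of the stratum through $p$ is $\dim(G/H)+\dim V^H$, without the $\dim(\h^\circ)$ term, and your proposed conical chart will not cut out the right submanifold for $X_0$. Once you correct this, the remaining step is exactly what the paper does: split $V=V^H\times W$ as an $H$-module, note $V_{(K)}=V^H\times W_{(K)}$ and $Q^{-1}(0)=V^H\times Q_W^{-1}(0)$, and observe that the sets $W_{(K)}\cap Q_W^{-1}(0)\cap V^{T_\beta}$ are conical in $W$ because $Q_W$ is homogeneous quadratic, $V^{T_\beta}$ is a linear subspace, and orbit types in a linear representation are scale-invariant. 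Naming $C_V$ explicitly (rather than treating it as a black-box $\text{Crit}_V$) is also what lets you actually verify that the ``linearisation-in-the-slice-direction'' concern you flag at the end is harmless.
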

\begin{proof}
Denote $C=\text{Crit}\norm{\mu}^2$.

First, local finiteness and the condition of the frontier follow from these same properties of $\S_G$. 

If we prove the existence of conical neighbourhoods in the sense of Definition \ref{conical_neighbourhood}, then $C\cap X$ is a manifold for $X\in\S_G$ and therefore this decomposition is a stratification of $C$. 
That it is a $G$-stratification is clear, and having a conical neighbourhood is exactly smooth local triviality with conical fibers.

So it is left to show that each point $p\in C$ has a conical neighbourhood with regard to its stratum.
Since this is a local property, we can use the local normal form to reduce to the \textit{Hamiltonian $G$ model} case. 

With the notation of \cite[Section~7]{morse_bott_kirwan_is_local}, we assume that $M$ is an open neighbourhood of the zero section in
\[
G\times_{H}\left(V \times {(\g_\beta/\h)}^* \right)
\]
and our base point is $[1,0,0]$. Here $H\subset G$ is a closed subgroup, $\h$ its Lie algebra, $V$ is a symplectic vector space on which $H$ acts linearly and symplectically, $\beta = \mu([1,0,0])$, the subgroup $G_\beta\subset G$ is the stabilizer of $\beta$ in the coadjoint action of $G$ on $\g^*$ and $\g_\beta$ is its Lie algebra.

Denote by $Q:V\to \h^*$ the quadratic momentum map for the $H$ action on $V$. The momentum map $\mu:M \to \g^*$ is of the form
\begin{equation}\label{moment_map_local_structure}
\mu([g,z,v]) = \text{Ad}^*(g)\left(\beta + Q(z) + v \right)
\end{equation}
where $\h^*$ and ${(\g_\beta/\h)}^*$ are embedded as sub Lie algebras of $\g^*$ using the inner product. 

Let $\widehat{\beta}$ be the image of $\beta$ under the isomorphism $\g^*\to\g$ given by the inner product. By \cite[Lemma~7.3]{morse_bott_kirwan_is_local}, the assumption $[1,0,0]\in C$ implies $\beta\in \h^*$ and therefore $\widehat{\beta}\in \h$. Let $T_\beta$ be the closure in $H$ of the one parameter subgroup generated by $\widehat{\beta}$.

Following the proofs of \cite[Lemmas~7.11 and 7.10]{morse_bott_kirwan_is_local} and using the identification
\[
G\times_{H}\left(V \times {(\g_\beta/\h)}^* \right) \simeq G\times_{G_\beta}\left(G_\beta \times_{H} \left(V \times {(\g_\beta/\h)}^* \right)\right)
\]
one obtains a neighbourhood $U\subset M$ of $[1,0,0]$ such that
\[
C\cap U = \Bigl( G\times_{G_\beta}\left(G_\beta \times_{H}\left(C_V \times \{0\} \right) \right) \Bigr) \cap U
\]
where $C_V \subset V$ is given by
\[
C_V = Q^{-1}(0)\cap V^{T_\beta}
\]
and $V^{T_\beta}$ is the space of vectors fixed by $T_\beta$. Note that 
\[
G\times_{G_\beta}\left(G_\beta \times_{H}\left(C_V \times \{0\} \right) \right) \simeq G\times_{H}\left(C_V \times \{0\} \right)
\]
and therefore
\begin{equation}\label{crit_set_local_structure}
C\cap U \simeq \Bigl( G\times_{H}\left(C_V \times \{0\} \right) \Bigr) \cap U.
\end{equation}

With this local model we can now work similarly to the the proof of \cite[Theorem~2.1]{sjamaar1991stratified}.
A neighbourhood of $[1,0,0]$ is diffeomorphic to a neighbourhood $U'$ of $([1],0,0)$ in
\[
G/H \times V \times {(\g_\beta/\h)}^*
\]
such that the intersection of $C$ with this neighbourhood is identified with
\[
\Bigl( G/H \times C_V \times \{0\} \Bigr) \cap U'
\]
and the intersections of $X\in\S_G$ with this neighbourhood are of the form
\[
\Bigl( G/H \times V_{(K)} \times {(\g_\beta/\h)}^*_{(K)} \Bigr) \cap U'
\]
for closed subgroups $K\subset H$.

Denote by $V^H\subset V$ the symplectic subspace of vectors fixed by $H$ and by $W\subset V$ its symplectic perpendicular, which is $H$-invariant and symplectic. Therefore $V=V^H\times W$ both as a symplectic space and as an $H$ module. For all closed subgroups $K\subset H$ we have
\[
V_{(K)} = V^H \times W_{(K)}.
\]
and in particular $V_{(H)}=V^H\times \{0\}$. Note that $Q_W \coloneqq Q\lvert_W$ is a momentum map for the action of $H$ on $W$ and that
\[
Q^{-1}(0) = V^H \times Q_W^{-1}(0).
\]

Using the above, $V^H\subset Q^{-1}(0)$ and $V^H\subset V^{T_\beta}$, we obtain
\begin{align*}
M_{(H)} \cap C \cap U' &= \Bigl( G/H \times \left(V_{(H)}\cap C_V\right) \times \{0\} \Bigr) \cap U' \\
&= \Bigl( G/H \times V^H \times \{0\} \times \{0\} \Bigr) \cap U'
\end{align*}
and for $K\subset H$ a closed subgroup we have
\begin{equation}\label{crit_set_strat_local_structure}
\begin{aligned}
M_{(K)} \cap C \cap U' &= \Bigl( G/H \times \left(V_{(K)}\cap C_V\right) \times \{0\} \Bigr) \cap U' \\
&= \Bigl( G/H \times V^H \times \left(W_{(K)}\cap C_V \right) \times \{0\} \Bigr) \cap U' \\
&= \Bigl( G/H \times V^H \times \left(W_{(K)}\cap Q_W^{-1}(0) \cap V^{T_\beta} \right) \times \{0\} \Bigr) \cap U'.
\end{aligned}
\end{equation}
Now $Q_W$ is homogeneously quadratic and thus $\left(W_{(K)}\cap Q_W^{-1}(0) \cap V^{T_\beta} \right)$ is invariant under multiplication by $t\in (0,1)$. Taking a neighbourhood of $[1]$ in $G/H$ diffeomorphic to a disk produces a conical neighbourhood of $[1,0,0]$ in $M$.
\end{proof}
\begin{rem}
Let $p\in \text{Crit}\norm{\mu}^2$ with stabilizer subgroup $H$ and $\mu(p)=\beta$. The fact that $\beta$ is preserved by the $H$ action together with Equation (\ref{crit_set_strat_local_structure}) implies that around $p$, the orbit types $M_{(K)}, K\subset H$ which intersect non-trivially with $\text{Crit}\norm{\mu}^2$ must satisfy
\[
T_\beta \subset K.
\]
\end{rem}
\begin{rem}
Using Theorems \ref{crit_set_stratification_thm}, \ref{tangential_control_data_thm}, \ref{commutative_control_data_theorem} and \ref{weak_deformation_theorem} we obtain a neighbourhood of $\text{Crit}\norm{\mu}^2$ that admits a smooth equivariant weak deformation retraction to $\text{Crit}\norm{\mu}^2$. This confirms a conjecture by Harada and Karshon \cite[Remark~4.23]{harada_karshon} and implies that their localization theorem for the Duistermaat--Heckmann distribution \cite[Theorem~4.24]{harada_karshon} applies to $Z=\text{Crit}\norm{\mu}^2$ without additional assumptions.
\end{rem}
Examining the local structure given in Equations (\ref{moment_map_local_structure}) and (\ref{crit_set_local_structure}) one readily sees that there exists a neighbourhood of $\mu^{-1}(0)$ in $M$ such that the intersection of $\text{Crit}\norm{\mu}^2$ with this neighbourhood is $\mu^{-1}(0)$. It follows that $\mu^{-1}(0)$ is a union of connected components of $\text{Crit}\norm{\mu}^2$ and therefore:
\begin{thm}\label{zero_level_set_stratification_thm}
The decomposition
\[
S\coloneqq\{\mu^{-1}(0)\cap X:\:X\in \S_G\}
\]
is a $G$-stratification of $\mu^{-1}(0)$, which is smoothly locally trivial with conical fibers. 
\end{thm}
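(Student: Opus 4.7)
The plan is to reduce the statement to Theorem \ref{crit_set_stratification_thm} by verifying the observation that precedes the theorem: $\mu^{-1}(0)$ is a union of connected components of $\text{Crit}\norm{\mu}^2$. Once this is established, the stratification $S$ is obtained by restricting the $G$-stratification of $\text{Crit}\norm{\mu}^2$ from Theorem \ref{crit_set_stratification_thm} to this union of connected components, and all the desired properties (being a $G$-stratification and being smoothly locally trivial with conical fibers) transfer automatically because they are local.

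To verify the observation, I would apply the local normal form used in the proof of Theorem \ref{crit_set_stratification_thm} to a point $p\in\mu^{-1}(0)$, i.e., with $\beta=\mu(p)=0$. In this case $\widehat{\beta}=0$, so $T_\beta$ is trivial and hence $V^{T_\beta}=V$, giving $C_V=Q^{-1}(0)$. Also $G_\beta=G$, so the factor ${(\g_\beta/\h)}^*$ is just a complement to $\h^*$ in $\g^*$. Equation (\ref{moment_map_local_structure}) then becomes $\mu([g,z,v])=\text{Ad}^*(g)(Q(z)+v)$, and since $Q(z)\in\h^*$ and $v$ lies in a complement to $\h^*$, vanishing of $\mu$ is equivalent to $Q(z)=0$ and $v=0$ simultaneously. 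Combining with Equation (\ref{crit_set_local_structure}), this yields
\[
\mu^{-1}(0)\cap U \;=\; \bigl(G\times_H (Q^{-1}(0)\times \{0\})\bigr)\cap U \;=\; \text{Crit}\norm{\mu}^2\cap U.
\]

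Since this holds in a neighbourhood $U$ of each $p\in\mu^{-1}(0)$, the set $\mu^{-1}(0)$ is open in $\text{Crit}\norm{\mu}^2$. It is also closed in $M$ as the preimage of a closed set under a continuous map, hence closed in $\text{Crit}\norm{\mu}^2$. Therefore $\mu^{-1}(0)$ is a union of connected components of $\text{Crit}\norm{\mu}^2$. The decomposition $S$ is the restriction to these components of the stratification from Theorem \ref{crit_set_stratification_thm}; in particular, for each $p\in\mu^{-1}(0)$, the conical neighbourhood of $p$ constructed in the proof of Theorem \ref{crit_set_stratification_thm} is already contained in a neighbourhood where $\text{Crit}\norm{\mu}^2$ and $\mu^{-1}(0)$ coincide, so it serves as a conical neighbourhood for $S$ as well. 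There is no significant obstacle: Theorem \ref{crit_set_stratification_thm} does all the work, and this result is a direct corollary.
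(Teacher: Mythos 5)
Your proposal is correct and follows exactly the paper's route: the paper states (immediately before the theorem) that examining Equations (\ref{moment_map_local_structure}) and (\ref{crit_set_local_structure}) "one readily sees" that $\mu^{-1}(0)$ agrees with $\text{Crit}\norm{\mu}^2$ in a neighbourhood, hence is a union of its connected components, and treats the theorem as a direct consequence of Theorem \ref{crit_set_stratification_thm}. You have merely filled in the short computation (with $\beta=0$, $T_\beta$ trivial, $G_\beta=G$, $C_V=Q^{-1}(0)$) that the paper leaves to the reader; the only negligible imprecision is your claim that the conical neighbourhood from Theorem \ref{crit_set_stratification_thm} is "already contained" in the coincidence neighbourhood $U$, when strictly one may first need to shrink it, which is harmless since the conical-neighbourhood condition is stable under shrinking.
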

\begin{rem}
Using Theorems \ref{zero_level_set_stratification_thm}, \ref{tangential_control_data_thm}, \ref{commutative_control_data_theorem} and \ref{weak_deformation_theorem} we obtain a smooth equivariant homotopy inverse to the inclusion map $i:\mu^{-1}(0)\hookrightarrow U$ for some neighbourhood $U$ of $\mu^{-1}(0)$. This confirms a conjecture by Holm and Karshon \cite[Remark~8.2]{morse_bott_kirwan_is_local}.
\end{rem}

\section{Extending the theory to reduced spaces}\label{extension_section}
The aim of this section is to obtain results similar to Theorems \ref{tangential_control_data_thm}, \ref{commutative_control_data_theorem}, \ref{weak_deformation_theorem} and \ref{zero_level_set_stratification_thm} that apply to reduced spaces. We begin by discussing a notion of smooth structures on stratified spaces that are not given as stratified subsets of manifolds. There are a couple of relevant structures in the literature:
\begin{enumerate}
    \item Stratified subcartesian spaces, see Lusala and Śniatycki \cite{lusala_śniatycki_2011}.
    \item Stratified differentiable spaces, see Crainic and Mestre \cite[Definition~5.1]{orbi_spaces_groupoids}.
    \item Stratified spaces with a smooth structure, see Pflaum \cite[Section~1.3]{pflaum_book}.
\end{enumerate}
Our motivating examples, orbit spaces $M/G$ and reduced spaces $\mu^{-1}(0)/G$, provide examples of each of these structures. We will work with the third structure, as some relevant literature on reduced spaces is already written using this language. 
\begin{definition}[{\cite[Section~1.3]{pflaum_book}}]\label{smooth_structure}
Let $A$ be a topological space with a fixed stratification $\S$. A \textbf{singular chart} of $A$ consists of an open set $U\subset A$, an open set $\Omega\subset \R^n$ and a map $\theta:U\to \Omega$ satisfying:
\begin{enumerate}
    \item $\theta(U)$ closed in $\Omega$.
    \item $\theta$ is a homeomorphism onto its image.
    \item For all $X\in\S$, the map $\theta\lvert_X$ is a diffeomorphism with an embedded submanifold of $\Omega$.
\end{enumerate}

Two singular charts
\begin{align*}
\theta_1&:U_1\to \Omega_1\subset \R^{n_1}\\ 
\theta_2&:U_2\to \Omega_2\subset \R^{n_2}
\end{align*}
are \textbf{compatible} if for all $x\in U_1\cap U_2$ there exist a neighbourhood $U_x\subset U_1\cap U_2$ of $x$, an integer $N\geq \max{\{n_1,n_2\}}$, open neighbourhoods $O_1,O_2\subset \R^N$ of $\theta_1(U_1)\times \{0\}, \theta_2(U_2)\times \{0\}$ respectively and a diffeomorphism $H:O_1\to O_2$ such that
\[
i_{n_1}^N\circ \theta_1 = H\circ i_{n_1}^N\circ \theta_1.
\]
Here $i_{n}^N$ is the natural embedding $R^n\hookrightarrow \R^N$.

A \textbf{singular atlas} $\mathcal{U}$ on $A$ is a collection of pairwise compatible singular charts which cover $A$. Two such atlases are compatible if their union is also a singular atlas, and this defines an equivalence relation. A \textbf{maximal singular atlas} is the atlas containing containing all other compatible atlases as subsets.

A \textbf{smooth structure} on $A$ is a maximal singular atlas.
\end{definition}
\begin{definition}\label{smooth_maps}
Let $A$ be a stratified space with a smooth structure as in Definition \ref{smooth_structure} and let $W$ be an open set. A function $f:W\to \R$ is a \textbf{smooth function} if for every chart $(\theta, U, \Omega)$ with $U\subset W$, the function $f\circ \theta^{-1}:\theta(U)\to \R$ is the restriction of some smooth function on $\Omega$. This defines a sheaf $C_A^\infty(\cdot)$ of smooth functions on $A$.

A map $\varphi:A\to B$ between stratified spaces with smooth structures is a \textbf{smooth map} if
\[
\varphi^*C_B^\infty(\cdot) \subset C_A^\infty(\cdot).
\]
\end{definition}
\begin{example}
A manifold $M$ can be viewed as a stratified space with one stratum. Its (regular) charts can then be viewed as singular charts to obtain the smooth structure.
\end{example}
\begin{example}
A stratified subset of a manifold $i:C\hookrightarrow M$ together with the smooth structure induced from charts of $M$ is a stratified space with a smooth structure. Its sheaf of smooth functions is given by
$i^*C_M^\infty(\cdot)$
and can be identified with the sheaf
\[
C_M^\infty(\cdot) / I_C(\cdot)
\]
where $I_C(\cdot)$ denotes the ideal sheaf of smooth functions vanishing on $C$.
\end{example}
\begin{example}[{\cite[Section~4.4]{pflaum_book}}]\label{linear_space_example}
Let $V$ be a vector space, $H$ a compact Lie group acting on $V$ and $\pi:V\to V/H$ the quotient map. Stratify $V$ by orbit types as described in Definition \ref{stratification_by_orbit_type_def} and stratify $V/H$ by pushing forward the stratification of $V$ using the quotient map. That this induces a stratification of $V/H$ is proved in \cite[Section~4.3]{pflaum_book}.

Let $P(V)^H$ be the algebra of $H$-invariant polynomials on $V$, which is finitely generated by a classical result of Hilbert. Let $\left(\sigma_1,\ldots,\sigma_k \right)$ be a minimal generating set of homogeneous polynomials and define the Hilbert map $\sigma:V\to\R^k$ as
\[
\sigma(v) = \left(\sigma_1(v),\ldots,\sigma_k(v) \right)
\]
and $\overline{\sigma}$ the induced map on $V/H$.

Schwarz \cite{SCHWARZ197563} showed that $\overline{\sigma}:V/H\hookrightarrow \R^k$ is continuous, injective, proper and that
\[
\sigma^*:C_{\R^k}^\infty(\cdot)\to {C_V^\infty(\cdot)}^H
\]
is surjective, where ${C_V^\infty(\cdot)}^H$ is the sheaf of $H$-invariant functions. Taking $\overline{\sigma}:V/H\hookrightarrow \R^k$ to be a global singular chart we obtain a smooth structure on $V/H$ with the sheaf of smooth functions defined as
\[
C_{V/H}^\infty(U) = {C_{V}^\infty\left(\pi^{-1}(U)\right)}^H.
\]
\end{example}
\begin{example}[{\cite[Section~6]{sjamaar1991stratified},\cite[Section~6]{pflaum2001smooth}}]\label{reduced_linear_space_example}
with the notation of Example \ref{linear_space_example}, assume in addition that $V$ is symplectic and that $H$ acts symplectically on $V$ with a momentum map $\mu_V:V\to \h^*$. Let $V_0\coloneqq \mu_V^{-1}(0)/H \subset V/H$ be the reduced space and stratify it by pushing forward the stratification of $\mu_V^{-1}(0)$ described in Theorem \ref{zero_level_set_stratification_thm}. That this induces a stratification of $V_0$ is proved in \cite{sjamaar1991stratified}.

Denote by $I(\cdot)\subset C_V^\infty(\cdot)$ the ideal sheaf of smooth functions vanishing on $\mu_V^{-1}(0)$. Using the fact that $V_0\subset V/G$ is closed and the result of Schwarz we see that
\[
\overline{\sigma}\lvert_{V_0}:V_0\hookrightarrow \R^k
\]
is continuous, injective, proper and
\[
{\sigma\lvert_{\mu^{-1}(0)}}^*:C_{\R^k}^\infty(\cdot)\to {C_V^\infty(\cdot)}^H / I(\cdot)^H
\]
is surjective. Similarly to Example \ref{linear_space_example}, taking $\overline{\sigma}\lvert_{V_0}$ to be a global singular chart we obtain a smooth structure on $V_0$ with the sheaf of smooth functions defined as
\[
C_{V_0}^\infty(U) = {C_{V}^\infty\left(\pi^{-1}(U)\right)}^H / I(\pi^{-1}(U))^H.
\] 
Moreover, identifying $V_0$ with $\overline{\sigma}(V_0)$ gives it the structure of a Whitney (B) stratified subset of $\R^k$.
\end{example}
\begin{example}[{\cite[Section~6]{sjamaar1991stratified},\cite[Section~6]{pflaum2001smooth}}]\label{reduced_manifold_example}
Let $G$ be a compact Lie group acting symplectically on a symplectic manifold $M$ with a momentum map $\mu:M\to \g^*$. Let $\pi:M\to M/G$ be the quotient map and let $M_0 = \mu^{-1}(0)/G\subset M/G$ be the reduced space, stratified by pushing forward the stratification of $\mu^{-1}(0)$ described in Theorem \ref{zero_level_set_stratification_thm}. That this induces a stratification of $M_0$ is proved in \cite{sjamaar1991stratified}. 

Let $x\in M_0$, let $p\in \mu^{-1}(0)$ be a representative of $x$ and let $H$ be the stabilizer subgroup of $p$. Denote by $V$ the symplectic slice for $p\in G\cdot p$ with $\mu_V:V\to \h^*$ a momentum map for the induced $H$ action, and use the notation $V_0,\sigma_V,C_{V_0}^\infty(\cdot)$ of Example \ref{reduced_linear_space_example}.

Denote by $I(\cdot)\subset C_M^\infty(\cdot)$ the ideal sheaf of smooth functions vanishing on $\mu^{-1}(0)$ and define a sheaf $C_{M_0}^\infty(\cdot)$ on $M_0$ by
\[
C_{M_0}^\infty(U) = {C_M^\infty\left(\pi^{-1}(U)\right)}^G / {I\left(\pi^{-1}(U)\right)}^G.
\]

Sjamaar and Lerman \cite[Theorem~5.1]{sjamaar1991stratified} showed that there exist a neighbourhood $U_1\subset M_0$ of $x$, a neighbourhood $U_2\subset V_0$ of $\bar{0}$ and a homeomorphism
\[
\varphi:U_1\to U_2.
\]
such that 
\[
\varphi^*:C_{V_0}^\infty\lvert_{U_2}(\cdot) \to C_{M_0}^\infty\lvert_{U_1}(\cdot)
\]
is an isomorphism. Pflaum \cite[Section~6]{pflaum2001smooth} then showed that the composition of $\varphi$ with the reduced Hilbert map
\[
\overline{\sigma_V}\circ\varphi:U_1\to\R^k
\]
is a singular chart and that the collection of singular charts obtained this way forms a singular atlas on $M_0$. Moreover, the induced smooth structure has $C_{M_0}^\infty(\cdot)$ as the sheaf of smooth functions.
\end{example}

\subsection{Commutative control data for reduced spaces}\label{reduced_space_control_data_subsection}
\begin{definition}\label{fibered_neighbourhood}
Let $(A, \S)$ be a stratified space with a smooth structure and let $X\in S$. A \textbf{fibered neighbourhood} of $X$ in $A$ consists of the following data:
\begin{enumerate}
    \item A neighbourhood $T_X$ of $X$ in $A$.
    \item A \textbf{homothety} of $T_X$ to $X$ --- a smooth map 
    \begin{align*}
    m_X&:\R_{\geq 0}\times T_X\to T_X \\
    m_X^t&(v) \coloneqq m(t,v)
    \end{align*}
    with the properties:
    \begin{enumerate}
        \item $m_X^t\circ m_X^s = m_X^{ts}$.
        \item $m_X^1 = \operatorname{Id}\lvert_{T_X}$.
        \item For $t>0$, the maps $m_X^t$ preserve strata.
        \item $m_X^0(T_X)=X$.
    \end{enumerate}
    \item A \textbf{distance function} --- A smooth function $\rho:T_X\to \R_{\geq 0}$ satisfying:
    \begin{enumerate}
        \item $\rho_X\circ m_X^t = t^2\cdot \rho$.
        \item $\rho_X^{-1}(0)=X$.
    \end{enumerate}
\end{enumerate}
\end{definition}

With the notation above, assume $X<Y$ implies $\dim{X}<\dim{Y}$.  This is the case for example when $(A, \S)$ with its smooth structure is Whitney (B) regular. Let
\[
A_d \coloneqq \bigsqcup_{X\in \S_d}X
\]
and
\begin{align*}
A_{<d} &\coloneqq \bigsqcup_{i<d}A_i \\
A_{\geq d} &\coloneqq \bigsqcup_{i\geq d}A_i = A \setminus A_{<d}.
\end{align*}
and note that the extra assumption implies that $A_{<d}$ is closed in $A$.

Let ${\{\left(T_X,m_X^t,\rho_X\right)\}}_{X\in\S}$ be a collection, indexed by $X\in\S$, such that $\left(T_X,m_X^t,\rho_X\right)$ is a fibered neighbourhood of $X$. Going over the definitions of pre-commutativity, commutativity and being adjusted in Subsection \ref{control_data_subsection} and replacing $M_d$ with the open set $A_{\geq d}$, one can make sense of what it means for the collection to satisfy each of these properties. For example, commutativity now reads that for all $X,Y\in \S$:
\begin{enumerate}
    \item The homotheties $m_X^s, m_Y^t$ commute in the domain of definition of their compositions.
    \item $\rho_X\circ m_Y^t=\rho_X$ in the domain of definition.
\end{enumerate}

Note that the notion of being tangential is already included in the definition of a fibered neighbourhood --- this is exactly the requirement that the homothety $m_X^t$ preserves strata for $t>0$.
\begin{thm}\label{reduced_space_control_data_thm}
Let $M_0$ be a reduced space with its stratification and smooth structure from Example \ref{reduced_manifold_example}. Then:
\begin{enumerate}
    \item There exists a collection ${\{\left(T_X,m_X^t,\rho_X\right)\}}_{X\in\S}$ of fibered neighbourhoods that is adjusted and commutative.
    \item For any stratified subspace $C_0\subset M_0$ there exists a neighbourhood of $C_0$ in $M_0$ admitting a smooth weak deformation retraction to $C_0$. Moreover, this smooth weak deformation retraction is the restriction of a smooth homotopy $\overline{F}:[0,1]\times M_0 \to M_0$.
\end{enumerate}
\end{thm}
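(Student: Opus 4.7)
The plan is to apply the machinery of Theorems \ref{tangential_control_data_thm}, \ref{commutative_control_data_theorem}, and \ref{weak_deformation_theorem} to $\mu^{-1}(0) \subset M$, and then push the resulting $G$-equivariant structures down to $M_0 = \mu^{-1}(0)/G$. By Theorem \ref{zero_level_set_stratification_thm}, the orbit-type $G$-stratification of $\mu^{-1}(0)$ is smoothly locally trivial with conical fibers, so these theorems yield $G$-equivariant commutative tangential control data $\{(V_X, T_X, m_X^t, \rho_X)\}_{X \in \S}$ for $\mu^{-1}(0) \subset M$. Since $V_X$ vanishes on $X$ and is tangent to every stratum $Y > X$ by tangentiality, while $T_X$ meets no other strata of $\mu^{-1}(0)$ by adjustedness, $V_X$ is tangent to $\mu^{-1}(0)$ throughout $T_X$; consequently $m_X^t$ preserves $T_X \cap \mu^{-1}(0)$, and $\rho_X$ restricts to a $G$-invariant function there.

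For each stratum $X$ of $\mu^{-1}(0)$, let $\bar X \coloneqq \pi(X)$ denote the corresponding stratum of $M_0$. Using $G$-invariance of $T_X$ and $\rho_X$ together with $G$-equivariance of $m_X^t$, the restrictions to $T_X \cap \mu^{-1}(0)$ project via $\pi$ to an open set $T_{\bar X} \coloneqq \pi(T_X \cap \mu^{-1}(0))$, a map $\bar m_{\bar X}^t : \R_{\geq 0} \times T_{\bar X} \to T_{\bar X}$, and a function $\bar \rho_{\bar X} : T_{\bar X} \to \R_{\geq 0}$. That $T_{\bar X} \subset (M_0)_{\geq \dim{\bar X}}$ follows from adjustedness on $\mu^{-1}(0)$, since $\dim{\bar X'} < \dim{\bar X}$ forces $X$ and $X'$ to be incomparable and hence $T_X \cap X' = \emptyset$. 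Smoothness with respect to the structure of Example \ref{reduced_manifold_example} follows from smoothness of $m_X^t, \rho_X$ on $M$, their $G$-equivariance/invariance, and the identification $C^\infty_{M_0}(U) = C^\infty_M(\pi^{-1}(U))^G / I(\pi^{-1}(U))^G$: a smooth function on $M_0$ lifts to a $G$-invariant restriction of a smooth function on $M$, and composition with $m_X^t$ or $\rho_X$ preserves this class. The defining properties of Definition \ref{fibered_neighbourhood}, as well as adjustedness and commutativity, transfer directly from the corresponding properties of the control data on $M$, using $\pi(A \cap B) = \pi(A) \cap \pi(B)$ for $G$-invariant subsets $A, B \subset M$.

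For the deformation retraction, given a stratified subspace $C_0 \subset M_0$, set $\tilde C \coloneqq \pi^{-1}(C_0) \cap \mu^{-1}(0)$, a $G$-invariant closed union of strata of $\mu^{-1}(0)$. Applying Theorem \ref{weak_deformation_theorem} together with Remark \ref{stratified_subspace_deformation} produces a $G$-equivariant smooth homotopy $F : [0,1] \times M \to M$ whose restriction to a neighbourhood of $\tilde C$ in $M$ is a smooth weak deformation retraction onto $\tilde C$. By the construction in Section \ref{weak_deformation_section}, $F$ is obtained by composing time-dependent maps $f_d^t$ built from flows of multiples of the $V_X$'s; since each $V_X$ is tangent to $\mu^{-1}(0)$, the homotopy $F$ preserves $\mu^{-1}(0)$. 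Its $G$-equivariant restriction to $[0,1] \times \mu^{-1}(0)$ descends to the desired smooth homotopy $\bar F : [0,1] \times M_0 \to M_0$, whose restriction to $\bigcup_{\bar X} \{\bar \rho_{\bar X} < 1\}$ is a smooth weak deformation retraction onto $C_0$. The main technical difficulty will be verifying smoothness of the descended maps, particularly $\bar F$ on the product $[0,1] \times M_0$, in the sense of Definition \ref{smooth_maps}; this amounts to showing that pullback by $F$ preserves the characterization of smooth functions as restrictions of $G$-invariant smooth functions on $M$, and requires some care with the product smooth structure and the Hilbert-map description.
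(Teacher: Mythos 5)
Your approach matches the paper's exactly: the paper's proof is a one-liner stating that the $G$-equivariance of the structures from Theorems \ref{zero_level_set_stratification_thm}, \ref{tangential_control_data_thm}, \ref{commutative_control_data_theorem}, \ref{weak_deformation_theorem} and Remark \ref{stratified_subspace_deformation} allows one to reduce them to $M_0$, and your more detailed account is a faithful unpacking of that assertion. Note that the smoothness verification you flag at the end as ``the main technical difficulty'' --- checking that the descended maps $\bar m_{\bar X}^t$, $\bar\rho_{\bar X}$, $\bar F$ are smooth in the sense of the quotient smooth structure $C^\infty_{M_0}(U) = C^\infty_M(\pi^{-1}(U))^G / I(\pi^{-1}(U))^G$ --- is precisely the step the paper leaves implicit, so your proposal is neither behind nor ahead of the paper's level of rigor on that point.
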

\begin{proof}
The $G$-equivariance of the structures obtained from Theorems \ref{zero_level_set_stratification_thm}, \ref{tangential_control_data_thm}, \ref{commutative_control_data_theorem} and \ref{weak_deformation_theorem} and Remark \ref{stratified_subspace_deformation} allows one to reduce these structures to $M_0$ and obtain the required reduced structures.
\end{proof}

\subsection{Smooth local triviality with quasi-homogeneous fibers}
\begin{definition}
Let $(A, \S)$ be a stratified space with a smooth structure. It is smoothly locally trivial with conical fibers if the images of singular charts with their induced stratifications are smoothly locally trivial with conical fibers in the sense of Definition \ref{locally_trivial}.
\end{definition}

Given this definition, one may ask whether reduced spaces with their stratifications and smooth structures from Example \ref{reduced_manifold_example} have this property. Examining the proof of \cite[Theorem~6.7]{sjamaar1991stratified} and \cite[Section~6]{pflaum2001smooth}, one can see that they are smoothly locally trivial with fibers homeomorphic to cones. But they are not smoothly locally trivial with conical fibers, as exemplified by the following:
\begin{example}[{\cite[Examples~2.3 and 2.11]{chossat_examples}}]\label{d6_representation}
Take $H=D_3$, the symmetry group of an equilateral triangle, with the natural action on $V=\R^2$. Here the momentum map is zero (so $\mu^{-1}(0)=V$) and the homogeneous generators for the algebra of invariant polynomials are
\[
\sigma_1 = x^2 + y^2,\:\sigma_2 = x^3 -3xy^2.
\]

The image $\sigma(V)$ is the subset of $\R^2$ defined by
\[
x\geq0, \:y^2\leq x^3
\]
and the images of strata are
\begin{align*}
\sigma\left(V_{(D_3)} \right) &= \{0\} \\
\sigma\left(V_{(\varepsilon)} \right) &= \{x\geq 0,\:y^2=x^3\} \\
\sigma\left(V_{(1)} \right) &= \{x> 0,\:y^2<x^3\}.
\end{align*}
where $\varepsilon$ denotes a reflection.

Around the origin, higher strata cannot be mapped diffeomorphically to a cone as the two connected components of $\sigma\left(V_{(\varepsilon)} \right)$ approach the origin from the same direction. But they do have a quasi-homogeneous structure --- the map
\[
(x,y) \mapsto (t^2\cdot x,t^3\cdot y)
\]
preserves the strata for $t>0$.
\end{example}

This motivates the following definition:
\begin{definition}\label{quasi_homogeneous_locally_trivial}
Let $(A, \S)$ be a stratified space with a smooth structure. It is \textbf{smoothly locally trivial with quasi-homogeneous fibers} if it satisfies the following property.

For each $X\in \S$ of dimension $k$, there exists a non-negative integer $l$ and a list of integers $\bar{d}_X\coloneqq \left(d_1,\ldots,d_l\right)$  called \textbf{weights} such that the following holds. For each $p\in X$ there exists a neighbourhood $U_p$ of $p$ in $C$ and a singular chart 
\[
\theta_p:U_p\to\Omega_p\subset \R^k \times \R^l
\]
such that:
\begin{enumerate}
    \item $\theta_p(X\cap U_p)=\left(\R^k\times\{0\}\right)\cap \Omega_p$.
    \item For all $Y\in\S$ with $X<Y$ we have
    \[
    \theta_p(Y\cap U_p)=\left(\R^k\times C_{Y,p}^{\bar{d}_X}\right)\cap \Omega_p
    \]
    where $C_{Y,p}^{\bar{d}_X}\subset \R^l\setminus \{0\}$ is \textbf{$\bar{d}_X$-conical}, i.e. invariant under the map
    \[
    (y_1,\ldots,y_l) \mapsto (t^{d_1}\cdot y_1,\ldots,t^{d_l} \cdot y_l)
    \]
    for $t\in (0,1)$.
\end{enumerate}
\end{definition}
\begin{definition}
A \textbf{quasi-homogeneous neighbourhood} of $p$ is the data $\left(U_p, \Omega_p, \theta_p\right)$.
\end{definition}
\begin{rem}
One can check that the local condition of Definition \ref{quasi_homogeneous_locally_trivial} implies Whitney (B) regularity. 
\end{rem}
\begin{thm}\label{reduced_space_local_structure_thm}
Let $M_0$ be a reduced space with its stratification and smooth structure from Example \ref{reduced_manifold_example}. 

It is smoothly locally trivial with quasi-homogeneous fibers.
\end{thm}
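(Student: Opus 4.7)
The plan is to reduce to the Sjamaar--Lerman local model from Example \ref{reduced_manifold_example}, where the required quasi-homogeneity becomes visible once one picks a generating set of invariant polynomials that respects the splitting of the symplectic slice into the $H$-fixed subspace and its symplectic complement.

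Fix $x\in M_0$ in a stratum $X$, and let $p\in\mu^{-1}(0)$ be a representative with stabilizer $H\subset G$. Let $V$ be the symplectic slice at $p$, let $\mu_V:V\to\h^*$ be its momentum map, $V_0=\mu_V^{-1}(0)/H$, and $\overline{\sigma_V}:V_0\hookrightarrow\R^k$ the reduced Hilbert map induced by a minimal set of homogeneous generators $\sigma_1,\dots,\sigma_k$ of $P(V)^H$. By Example \ref{reduced_manifold_example} there is a stratification-preserving homeomorphism $\varphi:U_1\to U_2\subset V_0$ from a neighbourhood $U_1$ of $x$ in $M_0$ to a neighbourhood $U_2$ of $\bar 0$ in $V_0$ such that $\overline{\sigma_V}\circ\varphi$ is a singular chart of $M_0$ at $x$. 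Hence it suffices to verify the local form of Definition \ref{quasi_homogeneous_locally_trivial} for $V_0\subset\R^k$ at $\bar 0$.

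Decompose $V=V^H\oplus W$ symplectically, where $W$ is the $H$-invariant symplectic complement; since $V^H$ is symplectic, $W^H=0$. Because $H$ acts trivially on $V^H$ we have $P(V)^H=P(V^H)\otimes P(W)^H$, so the minimal generating set can be taken to consist of linear coordinates $x_1,\dots,x_n$ on $V^H$ together with homogeneous generators $\sigma_{W,1},\dots,\sigma_{W,l}$ of $P(W)^H$ of degrees $d_1,\dots,d_l\geq 2$ (the bound uses $W^H=0$, so $P(W)^H$ has no linear part). Setting $\bar d=(d_1,\dots,d_l)$, the Hilbert map reads $\sigma(v,w)=(v,\sigma_W(w))$, and the scalar action $t\cdot(v,w)=(tv,tw)$ on $V$ pushes forward, under $\sigma$, to the product of the ordinary scaling on $\R^n$ and the $\bar d$-scaling on $\R^l$.

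Since $H$ acts trivially on $V^H$ the infinitesimal action vanishes there, so $\mu_V|_{V^H}\equiv 0$; writing $Q_W:=\mu_V|_W$, which is quadratic, we get $\mu_V^{-1}(0)=V^H\times Q_W^{-1}(0)$, and for any closed $K\subset H$ we have $V_{(K)}=V^H\times W_{(K)}$. Each intersection $V_{(K)}\cap\mu_V^{-1}(0)$ is therefore invariant under the full scalar action on $V$, because $Q_W^{-1}(0)$ is conical by homogeneity of $Q_W$ and $W_{(K)}$ is conical by linearity of the $H$-action. Pushing forward by $\sigma$ and passing to the quotient, the stratum of $\bar 0$ in $V_0$ goes to $\R^n\times\{0\}$ (by Schwarz's theorem $\sigma_W$ separates $H$-orbits, so it vanishes only at $0$), while each higher stratum $Y>X$ is sent to a set of the form $\R^n\times C_Y^{\bar d}$ with $C_Y^{\bar d}\subset\R^l\setminus\{0\}$ invariant under $\bar d$-scaling. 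Pulling back by $\varphi$ gives the required chart, with weights $\bar d_X:=\bar d$; these depend only on the isomorphism class of the $H$-representation $W$, which is locally constant on the stratum $X$, so $\bar d_X$ is well-defined. The main technical point is matching the chart with the splitting --- choosing the generating set so that the first $n$ coordinates trivialize the $X$-direction while the remaining $l$ coordinates carry all of the quasi-homogeneous scaling --- after which the result becomes a direct consequence of the homogeneity of $Q_W$ and of the $\sigma_{W,j}$.
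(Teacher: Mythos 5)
Your argument follows the same strategy as the paper's proof: reduce to the Sjamaar--Lerman linear model at $p$, split the slice $V=V^H\times W$, choose a homogeneous Hilbert basis respecting that splitting, and observe that the linear scaling on $V$ pushes forward under $\sigma$ to the $\bar d$-quasi-homogeneous scaling on $\R^k\times\R^l$. The one place you diverge is in producing the adapted Hilbert basis: you read it off directly from $P(V)^H \cong P(V^H)\otimes P(W)^H$ (since $H$ acts trivially on $V^H$), which is clean and immediate, whereas the paper starts with an arbitrary homogeneous Hilbert basis of $P(V)^H$ and modifies the higher-degree generators by pre-composing with $\operatorname{Id}-\pi$ to strip out their $V^H$-dependence; both routes land on the same kind of minimal generating set (linear coordinates on $V^H$ plus generators of $P(W)^H$). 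You also derive the conical structure of the strata of $\mu_V^{-1}(0)$ from scratch ($\mu_V|_{V^H}\equiv 0$, $V_{(K)}=V^H\times W_{(K)}$), where the paper simply refers back to the proof of Theorem~\ref{crit_set_stratification_thm}; both are fine. One small attribution slip: the fact that the Hilbert map separates $H$-orbits (so $\sigma_W$ vanishes only at $0\in W$) is classical invariant theory for compact groups, not Schwarz's theorem, which is the statement that $\sigma^*$ is surjective on smooth functions.
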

\begin{proof}
The essence and most details of the proof can be found in \cite[subsection~4.3]{duistermaat2007dynamical}, albeit for $M/G$ instead of $M_0=\mu^{-1}(0)/G$. We recall them here as they give a nice explicit description of the weights for a given stratum.

Let $Z\coloneqq \mu^{-1}(0)$ with stratification $\S$, let $X\in\S,p\in X,H=G_p$ and $V$ be the symplectic slice of $p$.
By the local nature of this property, the local normal form and the fact that the normal orbit type is constant along a stratum we can reduce to the linear case. Therefore we may assume $M=V,G=H,X=V^H,p=0$ and $Z$ is the zero set of a quadratic momentum map. Let $k$ be the dimension of $V^H$.

Denote by $\pi:V\to V^H$ the projection given by averaging over $H$ and note that the symplectic perpendicular $W$ of $V^H$ is given by $\ker\pi$. Identify ${(V^H)}^*$ as a subspace of $V^*$ using $\pi^*:{(V^H)}^*\hookrightarrow V^*$ and let $\left(\sigma_1,\ldots,\sigma_k\right)$ be a basis of ${(V^H)}^*\subset V^*$. Dimensions considerations imply that the space of linear $H$-invariant polynomials ${P(V)}^H_{(1)}$ is exactly ${(V^H)}^*$.

Now complete $\left(\sigma_1,\ldots,\sigma_k\right)$ to some homogeneous Hilbert basis $\left(\sigma_1,\ldots,\sigma_k,\widetilde{\sigma_{k+1}},\ldots,\widetilde{\sigma_{k+l}}\right)$ and let
\[
\sigma_{k+j}\coloneqq\widetilde{\sigma_{k+j}}\circ(\operatorname{Id}-\pi)
\]
for $1\leq j\leq l$. Using induction on the degree, one can show that
\[
\sigma \coloneqq \left(\sigma_1,\ldots,\sigma_k,\sigma_{k+1},\ldots,\sigma_{k+l}\right)
\]
is also a homogeneous Hilbert basis. Moreover, for $v\in V^H,w\in W$ we have
\begin{align*}
\forall 1\leq j \leq k:\: &\sigma_j(v+w) = \sigma_j(v) \\
\forall 1\leq j \leq l:\: &\sigma_{k+j}(v+w) = \sigma_j(w)
\end{align*}
so that $\sigma:V^H\times W\to \R^k\times \R^l$ is of the form
\[
\sigma(v, w) = \left(\sigma_1(v),\ldots,\sigma_k(v),\sigma_{k+1}(w),\ldots,\sigma_{k+l}(w) \right).
\]

Write $\bar{d}\coloneqq (\deg{\sigma_{k+1}},\ldots,\deg{\sigma_{k+l}})$. The map 
 $\sigma:V\to\R^k\times \R^l$ satisfies the following properties:
\begin{enumerate}
    \item $\sigma(V^H)=\R^k\times \{0\}$
    \item If $C_W\subset W\setminus \{0\}$ is a conical subset, then
    \[
    \sigma(V^H\times C_W)=\R^k\times C_W^{\overline{d}}
    \]
    for some $\bar{d}$-conical subset $C_W^{\bar{d}}\subset \R^l\setminus \{0\}$.
\end{enumerate}

Examining the proof of Theorem \ref{crit_set_stratification_thm} one sees that a stratum $Y$ of $Z\subset V$ is of the form $V^H\times C_Y$ for some conical $C_Y\subset W$. This together with the above properties imply that the reduced map $\overline{\sigma}$ induces a quasi-homogeneous neighbourhood of $p$.
\end{proof}
\begin{rem}
Let $X$ be a stratm of $Z=\mu^{-1}(0)$, let $X_0=X/G$ and let
\[
{C_M^\infty(\cdot)}_{X,(i)}^G \subset {C_M^\infty(\cdot)}^G
\]
be the ideal sheaf of $G$-invariant functions vanishing on $X$ to order $i$. Denote by 
\[
C_{M_0}^\infty(\cdot)_{X_0,(i)}\subset C_{M_0}^\infty(\cdot)
\] 
the ideal sheaf that is the image of ${C_M^\infty(\cdot)}_{X,(i)}^G$ under the surjective map 
\[
{C_M^\infty(\cdot)}^G \to C_{M_0}^\infty(\cdot).
\]

Using the notation and the local structure from the proof of Theorem \ref{reduced_space_local_structure_thm}, one can show that in a neighbourhood $U_0$ of $x\in X_0$ the ideal $C_{M_0}^\infty(U_0)_{X_0,(i)}\subset C_{M_0}^\infty(U_0)$ is generated by the reduced monomials
\[
\prod_{j=1}^l\bar{\sigma}_{k+j}^{s_j}
\]
with 
\[
\sum_{j=1}^l \deg{\sigma_{k+j}}\cdot s_j \geq i.
\]

We get that the filtration
\[
C_{M_0}^\infty(\cdot)_{X,(1)}\supset C_{M_0}^\infty(\cdot)_{X,(2)}\supset \dots
\]
of $C_{M_0}^\infty(\cdot)$ can be seen as a \textit{weighting along $X_0$} in the sense of Loizides and Meinrenken \cite{LOIZIDES2023109072}, but in a singular setting. Moreover, for a homothety $m_{X_0}^t$ from Theorem \ref{reduced_space_control_data_thm} and a singular chart $(\theta, U, \Omega)$ we believe that the map
\[
\theta \circ m_{X_0}^t \circ \theta^{-1}: \theta(U)\to \theta(U)
\]
is the restriction to $\theta(U)$ of a homothety coming from a \textit{weighted tubular neighbourhood along $\theta(X_0)$ in $\Omega$}.
\end{rem}
\begin{rem}
We believe the following to be true:
\begin{enumerate}
    \item Orbispaces of proper Lie groupoids, with the stratification and smooth structure from \cite{orbi_spaces_groupoids}, are smoothly locally trivial with quasi-homogeneous fibers. In particular so are orbifolds.
    \item Theorem \ref{reduced_space_control_data_thm} can be proved for any stratified space with a smooth structure that is smoothly locally trivial with quasi-homogeneous fibers, by altering the definitions and ideas from Sections \ref{tangential_control_data_section}, \ref{commutative_control_data_section} and \ref{weak_deformation_section} to fit the non-embedded, quasi-homogeneous setting.
\end{enumerate}
Details will appear elsewhere.
\end{rem}
\printbibliography
\end{document}